\newtheorem{theorem}{Theorem}[section]
\newtheorem{lemma}[theorem]{Lemma}
\newtheorem{proposition}[theorem]{Proposition}
\newtheorem{corollary}[theorem]{Corollary}
\theoremstyle{definition}
\newtheorem{definition}[theorem]{Definition}
\theoremstyle{remark}
\newtheorem{remark}[theorem]{Remark}
\numberwithin{equation}{section}
\newcommand{\N}{\mathbb{N}}
\newcommand{\Z}{\mathbb{Z}}
\newcommand{\R}{\mathbb{R}}
\newcommand{\ba}{\begin{array}}
\newcommand{\ea}{\end{array}}
\newcommand{\di}{\displaystyle}
\newcommand{\ds}{\displaystyle}
\newcommand{\dint}{\displaystyle \int}
\newcommand{\id}[1][]{id_{\, #1}}
\begin{document}
\title{Ground and bound state solutions for a Schr\"odinger system with linear and nonlinear couplings in $\mathbb{R}^N$}
\author{Kanishka Perera$^{a}$, Cyril Tintarev$^{b}$, Jun
Wang$^{c}$, Zhitao Zhang$^{d}$\footnote{Corresponding author. E-mail\ addresses: kperera@fit.edu (K.\! Perera),\ tintarev@math.uu.se (C.\! Tintarev),\ wangmath2011@126.com (J.\! Wang),\ zzt@math.ac.cn (Z.\! Zhang)}
\\
 {\small $^{a}$Department of Mathematical Sciences, Florida Institute of Technology,}\\
 {\small Melbourne, FL 32901, USA}\\
{\small $^{b}$Department of Mathematics, Uppsala University,75 106 Uppsala, Sweden}\\
{\small $^{c}$Faculty of Science, Jiangsu University, Zhenjiang, Jiangsu, 212013, P.R. China}\\
{\small $^{d}$Academy of Mathematics and Systems Science, Chinese}\\
{\small Academy of Sciences, Beijing 100190, P.R. China}}

\date{}
\maketitle

\begin{quote}
\noindent {\bf Abstract:} We study the
existence of ground and bound state solutions for a system of coupled
Schr\"odinger equations with linear and nonlinear couplings in $\mathbb{R}^N$. By studying the limit system and using concentration compactness arguments, we prove the existence of ground and bound state solutions under suitable assumptions. Our results are new even for the limit system.
\\

\noindent {\bf Keywords}: {Coupled Schr\"odinger systems; positive
solutions; variational methods; linear and nonlinear couplings}\\

\noindent {\bf AMS Subject Classification (2010):} 35B32, 35B38,
35J50.
\end{quote}

\section{Introduction and main results}

\setcounter{section}{1} \setcounter{equation}{0}

In this paper we study the $2$-component coupled nonlinear Schr\"odinger system
\begin{equation}\label{auto}
\begin{cases}
- \Delta
u+u=(a_0(x)+a(x))|u|^{p-2}u+(\beta_0+\beta(x))|u|^{\frac{p}{2}-2}u|v|^{\frac{p}{2}}
+(\kappa_0+\kappa(x))v,\\
- \Delta
v+v=(b_0(x)+b(x))|v|^{p-2}v+(\beta_0+\beta(x))|u|^{\frac{p}{2}}
|v|^{\frac{p}{2}-2}v+(\kappa_0+\kappa(x))u,\\
(u,v) \in H^1(\mathbb{R}^N) \times H^1(\mathbb{R}^N),
\end{cases}
\end{equation}
where $2<p<\frac{2N}{N-2}$ if $N\geq3$, and $2<p<\infty$ if $N=1,2$.
We assume that the following condition holds:
\begin{itemize}
  \item [$(A_0)$]$a_0,\, b_0 \in L^\infty(\mathbb{R}^N)$ are positive $\Z^N$-periodic
functions, $\beta_0, \kappa_0\in\mathbb{R}$, $a, b, \beta, \kappa
\in L^\infty(\mathbb{R}^N)$ go to zero as $|x| \to \infty$, and
\begin{equation}\label{a-2}
\begin{split}
&\inf_{x \in \mathbb{R}^N}\, (a_0(x) + a(x))>0,\quad \inf_{x \in \mathbb{R}^N}\, (b_0(x) + b(x))>0,\\
&0<\kappa_0+\inf_{x \in \mathbb{R}^N}\, \kappa(x)\leq
\kappa_0+\sup_{x \in \mathbb{R}^N}\, \kappa(x)<1.
\end{split}
\end{equation}
\end{itemize}
This system of equations is related to the following important
Schr\"{o}dinger system with linear and nonlinear couplings arising
in Bose-Einstein condensates (see \cite{DKNF}):
\begin{equation}\label{original}
\left\{ \begin{aligned}
 &-i\frac{\partial \Phi}{\partial t}=\Delta\Phi-V(x)\Phi+\mu_1|\Phi|^2\Phi+\beta|\Psi|^2\Phi+\kappa\Psi,~t>0,x\in \Omega,  \\
 &-i\frac{\partial \Psi}{\partial t}=\Delta\Psi-V(x)\Psi+\mu_2|\Psi|^2\Psi+\beta|\Phi|^2\Psi+\kappa\Phi,~t>0,x\in \Omega,
\end{aligned} \right.
\end{equation}
where $\Omega$ is a smooth domain in $\mathbb{R}^N$, $V$ is the
relevant potential, typically consisting of a magnetic trap and/or
an optical lattice, and $\Phi$ and $\Psi$ are the (complex-valued) condensate
wave functions. The intra- and interspecies
interactions are characterized by the coefficients $\mu_1, \mu_2 > 0$ and $\beta$, respectively, while $\kappa$ denotes the strength of the radio-frequency (or electric-field) coupling. This system
also arises in the study of fiber optics, where the solution $(\Phi,\Psi)$ is two coupled electric-field envelopes of the same wavelength, but of different polarizations, and the linear coupling
is generated either by a twist applied to the fiber in the case of two linear polarizations, or by an elliptic deformation of the fiber's core in the case of circular polarizations.
Looking for solitary wave solutions of the form
$\Phi(x,t)=e^{i\lambda t}u(x),\, \Psi(x,t)=e^{i\lambda t}v(x)$, where $\lambda > 0$ is a constant, leads to the following elliptic system for
$u$ and $v$:
\begin{equation}\label{li-zhang1.1}
\left\{ \begin{aligned}
 &-\Delta u+(\lambda +V(x))u=\mu_1 u^3+\beta u v^2+\kappa v  & ~~in  &~~~~ \Omega, \\
 &-\Delta v+(\lambda+V(x)) v=\mu_2 v^3+\beta u^2 v+\kappa u  & ~~in  & ~~~~ \Omega,\\
 &u =v=0~~\mbox{on}~~\partial \Omega~ (\hbox{or}~ u,v\in H^1(\mathbb{R}^N)~ \hbox{if}~
 \Omega=\mathbb{R}^N),
\end{aligned} \right.
\end{equation}
which has received considerable attention in recent years.

Interesting existence and multiplicity results in various domains
for system \eqref{li-zhang1.1} with $\kappa = 0$ have been obtained
in
\cite{AC,ACR,BDW,Dancer-Wei-2009-TRMS,DW2,DWZ1,DWZ2,DWZ3,DWZ4,NTTV,TT,TV}.
In particular, bifurcation results were obtained in \cite{BDW}, and
larger systems and their limiting equations were considered in
\cite{DWZ1,DWZ2,DWZ3,DWZ4}. It was shown in Ambrosetti-Colorado
\cite{AC} that when $\kappa = 0$ and $V(x) \equiv 0$ there exist
constants $\beta_2 > \beta_1 > 0$ such that this sytem has a
positive ground state solution for $\beta > \beta_2$ and no positive
ground state solution for $\beta < \beta_1$. A solution is called a
ground state solution (or positive ground state solution) if its
energy is minimal among all the nontrivial solutions (or all the
positive solutions) of \eqref{auto} or \eqref{li-zhang1.1}.

Existence and asymptotic behavior of multi-bump solitons of system \eqref{li-zhang1.1} with $\kappa\neq 0,\beta = 0$ and $V(x) \equiv 0$ were studied in Ambrosetti-Cerami-Ruiz \cite{ACR} using perturbation methods. Ambrosetti-Cerami-Ruiz \cite{ACR-1} studied positive ground and bound state solutions of the system
\begin{equation}\label{ACR1}
\left\{ \begin{aligned}
 &-\Delta u+ u =(1+a(x)) |u|^{p-1}u+\kappa v, \\
 &-\Delta v+ v=(1+b(x)) |v|^{p-1}v+\kappa u,\\
 &(u,v) \in H^1(\mathbb{R}^N) \times H^1(\mathbb{R}^N)
\end{aligned} \right.
\end{equation}
when $\kappa>0$, $N\geq 2, 1<p<2^*-1,$ where $$ 2^*=\left\{\begin{array}{lll}\frac{2N}{N-2}, ~\hbox{if}~ N\geq 3,\\
+\infty,~\hbox{if}~ N=2.\end{array}\right.$$
Under the general assumptions $a(x), b(x)\in L^{\infty}(\mathbb{R}^N)$, $\lim\limits_{|x|\rightarrow+\infty} a(x)=\lim\limits_{|x|\rightarrow+\infty} b(x)=0$, $\inf\limits_{x\in\mathbb{R}^N}(1+a(x))>0,\, \inf\limits_{x\in\mathbb{R}^N}(1+b(x))>0$, and additional suitable hypotheses, they used concentration compactness type arguments to obtain some interesting results about the existence of positive
ground and bound state solutions.
\par
System \eqref{li-zhang1.1} with both $\kappa \ne 0$ and $\beta \ne
0$ has been much less studied. Topological methods were used in
Beitia-Garca-Torres \cite{BPT} to obtain a positive bound state
solution, and variational methods and index theory were used in
Li-Zhang \cite{Li-Kui-Zhangzhitao-2015-Preprint} to obtain a ground
state solution and infinitely many positive bound state solutions.
Some existence results when $V(x) \equiv 0$ were obtained in
Tian-Zhang \cite{tian-zhang} using variational and bifurcation
arguments. In the present paper we consider the system \eqref{auto},
which generalizes \eqref{ACR1}, with $\kappa_0 \ne 0$ and $\beta_0
\ne 0$. Our results seem to be new even for the limit system
\eqref{auto} when $a(x)=b(x)=\beta(x)=\kappa(x)\equiv0$.\par

Our first results is Theorem \ref{th1.1} below, which is concerned with the existence of a positive ground state solution of \eqref{auto}.

\begin{theorem}\label{th1.1}
Suppose that $(A_0)$ holds, $0 < \kappa_0 < 1$, and
\begin{equation}\label{a-6}
\kappa(x) \ge 0, \quad a(x) \ge 0, \quad b(x) \ge 0, \quad \beta(x) \ge 0,
\end{equation}
with at least one of the inequalities in \eqref{a-6} strict on a set of positive measure. Then \eqref{auto} has a ground state solution. For the case $u=v$, if
\begin{equation}\label{a-7}
\kappa(x)\ge 0, \quad a(x)+b(x)+2\beta(x)\ge 0,
\end{equation}
with at least one of these inequalities strict on a set of positive measure, then \eqref{auto} has a ground state solution.
\end{theorem}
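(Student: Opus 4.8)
The plan is to obtain the ground state as a minimizer of the energy functional
\begin{align*}
I(u,v)={}&\frac12\int_{\R^N}\bigl(|\nabla u|^2+u^2+|\nabla v|^2+v^2\bigr)-\int_{\R^N}(\kappa_0+\kappa)\,uv\\
&-\frac1p\int_{\R^N}\bigl((a_0+a)|u|^p+(b_0+b)|v|^p+2(\beta_0+\beta)|u|^{p/2}|v|^{p/2}\bigr)
\end{align*}
on $H:=H^1(\R^N)\times H^1(\R^N)$, over the Nehari manifold $\mathcal N=\{(u,v)\in H\setminus\{0\}:\langle I'(u,v),(u,v)\rangle=0\}$. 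First I would record the routine structural facts. By \eqref{a-2}, and in particular $\kappa_0+\sup\kappa<1$, the quadratic part of $I$ is positive definite and equivalent to $\|\cdot\|_H^2$. Since $p>2$ and the nonlinear part is $p$-homogeneous and superlinear, each ray $t\mapsto(tu,tv)$, $t>0$, meets $\mathcal N$ at a unique point, which maximizes $I$ along that ray, so
\[
c:=\inf_{\mathcal N}I=\inf_{(u,v)\ne 0}\ \max_{t>0}I(tu,tv)>0;
\]
moreover $\mathcal N$ is bounded away from $0$ and minimizing sequences are bounded (use that $I-\frac1p\langle I',\cdot\rangle$ controls $\|\cdot\|_H^2$), and every nontrivial solution of \eqref{auto} lies on $\mathcal N$, so a minimizer of $I|_{\mathcal N}$ that solves \eqref{auto} is automatically a ground state. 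Finally, replacing $(u,v)$ by $(|u|,|v|)$ does not increase $I$ along rays (here $\kappa_0+\kappa>0$ is used), so a minimizer may be taken componentwise nonnegative; by the strong maximum principle and the coupling it is then positive, once the semitrivial configurations $u\equiv0$ and $v\equiv0$ are excluded --- which is where $\kappa_0>0$ enters.

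The heart of the matter is compactness at the level $c$. Let $(u_n,v_n)\subset\mathcal N$ with $I(u_n,v_n)\to c$; by Ekeland's variational principle and the fact that $\mathcal N$ is a natural constraint, we may assume $(u_n,v_n)$ is a bounded $(PS)_c$-sequence for $I$ on $H$. Up to a subsequence $(u_n,v_n)\rightharpoonup(\bar u,\bar v)$, and since $a,b,\beta,\kappa\to0$ as $|x|\to\infty$ the perturbation terms are weakly sequentially continuous, so testing $I'(u_n,v_n)\to0$ against fixed functions yields $I'(\bar u,\bar v)=0$, i.e. $(\bar u,\bar v)$ solves \eqref{auto}. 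Set $(w_n,z_n)=(u_n-\bar u,v_n-\bar v)\rightharpoonup0$. By the Brezis--Lieb lemma (again using the decay of $a,b,\beta,\kappa$) one obtains $I(u_n,v_n)=I(\bar u,\bar v)+I_\infty(w_n,z_n)+o(1)$ and $\langle I'(u_n,v_n),(u_n,v_n)\rangle=\langle I_\infty'(w_n,z_n),(w_n,z_n)\rangle+o(1)$, where $I_\infty$ is the energy of the $\Z^N$-periodic limit system (coefficients $a_0,b_0,\beta_0,\kappa_0$ only). Hence $I_\infty(w_n,z_n)\to c-I(\bar u,\bar v)\le 0$ and $\langle I_\infty'(w_n,z_n),(w_n,z_n)\rangle\to0$. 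If $(w_n,z_n)\not\to0$ strongly, Lions' vanishing lemma produces points $y_n\in\Z^N$ along which the mass of $(w_n,z_n)$ does not vanish; by $\Z^N$-periodicity the translates $(w_n(\cdot+y_n),z_n(\cdot+y_n))$ form a bounded Palais--Smale sequence for $I_\infty$ with a nontrivial weak limit, hence a nontrivial solution of the limit system, so $c-I(\bar u,\bar v)=\lim I_\infty(w_n,z_n)\ge c_\infty$, the ground-state level of the limit system, and in particular $c\ge c_\infty$.

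Everything thus reduces to the strict inequality $c<c_\infty$. Here I use a positive ground state $(u_\infty,v_\infty)$ of the $\Z^N$-periodic limit system --- whose existence is part of the analysis of the limit system (Nehari minimization together with $\Z^N$-translation compactness) --- and test $I$ on the ray through it. From
\[
I(u,v)=I_\infty(u,v)-\int_{\R^N}\kappa\,uv-\frac1p\int_{\R^N}\bigl(a|u|^p+b|v|^p+2\beta|u|^{p/2}|v|^{p/2}\bigr),
\]
together with $u_\infty,v_\infty>0$ a.e. and $\kappa(x),a(x),b(x),\beta(x)\ge0$ with at least one of them positive on a set of positive measure, we get $I(tu_\infty,tv_\infty)<I_\infty(tu_\infty,tv_\infty)$ for every $t>0$, hence
\[
c\le\max_{t>0}I(tu_\infty,tv_\infty)<\max_{t>0}I_\infty(tu_\infty,tv_\infty)=I_\infty(u_\infty,v_\infty)=c_\infty.
\]
Combined with $c\ge c_\infty$ from the previous paragraph, this forces $(w_n,z_n)\to0$, i.e. $(u_n,v_n)\to(\bar u,\bar v)$ strongly; so $(\bar u,\bar v)\in\mathcal N$, $I(\bar u,\bar v)=c$, and $(\bar u,\bar v)$ --- made positive as above --- is the desired ground state. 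For the case $u=v$ one repeats the argument with $I$ restricted to the diagonal $\{u=v\}$, equivalently for the associated scalar equation whose effective nonlinear coefficient is a multiple of $a_0+b_0+2\beta_0+a+b+2\beta$: only $\kappa$ and the combination $a+b+2\beta$ enter the comparison, and condition \eqref{a-7} is precisely what makes the strict inequality $c^{\mathrm{diag}}<c^{\mathrm{diag}}_\infty$ hold.

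I expect the compactness step to be the main obstacle: one must show that the $\Z^N$-periodic limit system is the only mechanism by which a minimizing sequence can lose compactness, and that each bubble escaping to infinity costs at least $c_\infty$. This is the concentration--compactness analysis for a problem that is periodic (rather than autonomous or decaying) at infinity --- Lions' lemma, integer translations, and a Brezis--Lieb splitting --- and it is the reason one needs the genuinely strict inequality $c<c_\infty$, for which the sign hypotheses \eqref{a-6} (resp. \eqref{a-7}) are tailor-made.
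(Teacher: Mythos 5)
Your argument for the first assertion (hypothesis \eqref{a-6}) is correct and follows essentially the same route as the paper: the paper packages the comparison and compactness steps into an abstract framework (the class ${\mathcal S}_p$ of Section 5 and the profile decomposition of Section 4), but the substance --- a Brezis--Lieb/profile splitting showing that loss of compactness of a minimizing (PS) sequence forces $c\ge c_0$, and the strict inequality $c<c_0$ obtained by evaluating $\Phi$ on the ray through a positive ground state of the periodic limit system and using the signs in \eqref{a-6} --- is exactly what you do. (A minor slip: your intermediate claim $\lim I_\infty(w_n,z_n)=c-I(\bar u,\bar v)\le 0$ is unjustified when $(\bar u,\bar v)=0$, but you never actually use it; the dichotomy you run afterwards is the correct one.)

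The one genuine gap is in the second assertion. You propose to ``repeat the argument with $I$ restricted to the diagonal $\{u=v\}$,'' i.e.\ to minimize over the diagonal Nehari manifold and compare $c^{\mathrm{diag}}$ with $c^{\mathrm{diag}}_\infty$. A minimizer of the restricted functional is a critical point of $I|_{\{u=v\}}$, hence a solution of a single scalar equation with effective coefficient $\tfrac12\left(a_0+b_0+a+b\right)+\beta_0+\beta$; it is \emph{not} a solution of the system \eqref{auto} unless the two equations happen to coincide (e.g.\ $a_0+a\equiv b_0+b$), because constrained minimization on the diagonal gives no stationarity in the off-diagonal directions. The intended reading of ``the case $u=v$'' --- and what the paper does in Theorem \ref{zh0127} --- is different: one assumes that the \emph{limit} system has a ground state of the synchronized form $(w,w)$ (sufficient conditions for this are in Section 3), and then tests the \emph{full} functional $\Phi$ on the ray $t\mapsto(tw,tw)$. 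In the resulting comparison $\max_{t>0}\Phi(tw,tw)<c_0$ only $\kappa$ and the combination $a+b+2\beta$ enter, so \eqref{a-7} suffices for $c<c_0$; the compactness step is then run for the full problem exactly as in the first case, and the ground state obtained need not itself be diagonal. With that correction your proof matches the paper's.
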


\begin{remark}
We give conditions more general than \eqref{a-6} and \eqref{a-7} that guarantee the existence of a ground state solution of \eqref{auto} (see Theorem \ref{zh0127} in Section 6), which are not stated here in order to simplify notation.
\end{remark}

Next we study \eqref{auto} when the functions $a, b,
\kappa$ and $\beta$ are non-positive. In this case there exists no
ground state solution (see Lemma \ref{Lemma 7.1}), and bound states
must be sought at higher levels. We
assume that $a_0$ and $b_0$ are positive constants and $p=4$ in
\eqref{auto}. Then we have the following result.
\begin{theorem}\label{th1.2}
Let $a_0, b_0>0$, $0 < \kappa_0 < 1$, $p=4$, and assume that
\eqref{a-2} and one of the following conditions holds:
\begin{itemize}
  \item [(1)] $\beta_0\geq3$;
  \item [(2)] $1\leq\beta_0\leq3$ and $w(0)\leq
\sqrt{\frac{2\kappa_0(1+\beta_0)}{(3-\beta_0)(1-\kappa_0)}}$, where
$w$ is the unique positive solution of the scalar equation $-\Delta
u+u=u^{3},\ u\in H^1(\mathbb{R}^{N})$;
  \item [(3)] $-1<\beta_0<1$, $w(0)\leq
\sqrt{\frac{2\kappa_0(1+\beta_0)}{(3-\beta_0)(1-\kappa_0)}}$, and
\begin{itemize}
\item if $N=1$, then $\kappa_0$ or $\beta_0-1$ is sufficiently small,
\item if $N=2,3$, then $\beta_0, \kappa_0>0$ are sufficiently small, or $|\beta_0|$ is sufficiently small and $\kappa_0$ is close to 1.
\end{itemize}
\end{itemize}
If $\kappa(x), a(x), b(x), \beta(x) \le 0$, with at least one of the
inequalities strict on a set of positive measure, then system
\eqref{auto} has a positive bound state solution provided that
\begin{equation}
R_0:=\left(1 + \frac{|\kappa|_\infty}{1 - \kappa_0}\right)^2\left(1
- \max
\left\{\frac{|a|_\infty}{a_0},\frac{|b|_\infty}{b_0},\frac{|\beta|_\infty}{\beta_0}\right\}\right)^{-1}
\end{equation}
is sufficiently small.
\end{theorem}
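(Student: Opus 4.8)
The plan is to realize the bound state as a critical point of the energy functional of \eqref{auto} at a min-max level $c$ trapped strictly between the least energy $m_\infty$ of the autonomous limit system and $2m_\infty$, and to recover compactness at that level from a profile decomposition of Palais--Smale sequences. For the variational setup, with $p=4$ the functional is
\[
I(u,v)=\tfrac12\|(u,v)\|^2-\int_{\R^N}(\kappa_0+\kappa(x))uv-\tfrac14\int_{\R^N}(a_0+a(x))|u|^4-\tfrac14\int_{\R^N}(b_0+b(x))|v|^4-\tfrac12\int_{\R^N}(\beta_0+\beta(x))|u|^2|v|^2,
\]
on $H^1(\R^N)\times H^1(\R^N)$, with Nehari manifold $\mathcal N=\{(u,v)\neq 0:\langle I'(u,v),(u,v)\rangle=0\}$, and likewise $I_\infty,\mathcal N_\infty$ for the limit system ($a=b=\beta=\kappa\equiv 0$). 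Since $a,b,\beta,\kappa\le 0$ while $\kappa_0+\kappa>0$ by \eqref{a-2}, one has $I(u,v)\ge I_\infty(u,v)$ for $u,v\ge 0$, hence $m:=\inf_{\mathcal N}I=m_\infty:=\inf_{\mathcal N_\infty}I_\infty$, which by Lemma~\ref{Lemma 7.1} is \emph{not} attained, so no ground state exists and a bound state must be sought higher up. I would then import the facts about the limit system established earlier: under hypothesis (1), (2) or (3) the limit system has a positive radial least-energy solution $\mathbf z_\infty=(u_\infty,v_\infty)$ with $u_\infty,v_\infty>0$ everywhere at level $m_\infty$; the role of (1)--(3), together with $p=4$ and $a_0,b_0$ constant, is precisely to pin down this ground state and make the relevant energy integrals explicit.

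Next I would construct the min-max level. Let $t_y>0$ be the Nehari projection scalar of the translate $\mathbf z_\infty(\cdot-y)$ and put $F(y):=I\big(t_y\mathbf z_\infty(\cdot-y)\big)=\max_{t\ge 0}I\big(t\mathbf z_\infty(\cdot-y)\big)$. Evaluating at $t=1$ gives $F(y)\ge I\big(\mathbf z_\infty(\cdot-y)\big)=m_\infty+\big(\text{nonnegative coupling and self-interaction integrals}\big)>m_\infty$ for \emph{every} $y$, since $u_\infty,v_\infty>0$ and at least one of $a,b,\beta,\kappa$ is negative on a set of positive measure; on the other hand $F(y)\to m_\infty$ as $|y|\to\infty$ because $a,b,\beta,\kappa$ decay. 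Writing $F(y)=A_y^2/(4B_y)$ with $A_y=\|\mathbf z_\infty\|^2-\int_{\R^N}(\kappa_0+\kappa)u_\infty(\cdot-y)v_\infty(\cdot-y)$ and $B_y$ the associated quartic form, and using $a,b,\beta,\kappa\le 0$ together with the elementary bound $\int u_\infty v_\infty\le A_\infty/(2(1-\kappa_0))$, one gets $F(y)\le(\text{const})\,R_0\,m_\infty$ uniformly in $y$; hence $\sup_{\R^N}F<2m_\infty$ once $R_0$ is small. Finally, a barycenter map on $\{I\le m_\infty+\delta\}\cap\mathcal N$ (for small $\delta$), which is close to $y$ on $t_y\mathbf z_\infty(\cdot-y)$, shows via a standard degree argument that
\[
c:=\inf_{h\in\Gamma}\ \max_{y\in\overline{B_R}}\ I\big(h(y)\big),\qquad \Gamma=\big\{h\in C(\overline{B_R},\mathcal N):h(y)=t_y\mathbf z_\infty(\cdot-y)\ \text{on}\ \partial B_R\big\},
\]
with $R$ large, satisfies $c>m_\infty$, while the competitor $h(y)=t_y\mathbf z_\infty(\cdot-y)$ gives $c\le\sup_{\R^N}F<2m_\infty$.

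To conclude I would invoke the concentration-compactness analysis of $(PS)_c$ sequences: a bounded $(PS)_c$ sequence for $I$ either converges strongly — and then $c$ is a critical value realized by a nontrivial solution of \eqref{auto} — or else $c$ equals the sum of the energy of its weak limit (a critical point of $I$) and of finitely many translated nonzero critical points of $I_\infty$; since every nonzero critical level of both $I$ and $I_\infty$ is $\ge m_\infty$, the strict inequalities $m_\infty<c<2m_\infty$ leave no room for such splitting, provided $I_\infty$ has no critical value inside this window — a property again guaranteed by the analysis of the limit system under (1)--(3). Thus $(PS)_c$ holds and $c$ is a critical value of $I$. Running the whole argument inside the cone $\{u\ge 0,\ v\ge 0\}$ (legitimate because $I$ decreases under $(u,v)\mapsto(|u|,|v|)$ by the sign hypotheses, and because $\mathbf z_\infty$ is positive), the resulting solution $(u,v)$ is nonnegative, and the strong maximum principle together with $\kappa_0+\kappa>0$ forces $u,v>0$: a positive bound state.

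The main obstacle is the two-sided squeeze $m_\infty<c<2m_\infty$. The upper bound is exactly where the smallness of $R_0$ is consumed, and it is delicate because $F$ must be controlled uniformly in $y$ using \emph{only} the coefficient ratios entering $R_0$, which forces the explicit handling of the limit ground state (hence the hypotheses $p=4$, $a_0,b_0$ constant, and (1)--(3)). The lower bound requires the barycenter obstruction to genuinely rule out the escaping single-bump configuration that realizes the non-attained infimum $m_\infty$. And both must be reconciled with the absence of a spurious critical value of $I_\infty$ in $(m_\infty,2m_\infty)$, without which compactness at level $c$ could still fail; verifying this for the coupled limit system under (1)--(3) is the other technically substantial point.
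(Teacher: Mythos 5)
Your overall strategy --- identify $c=c_0$ as a non-attained level, trap a barycenter/linking min-max value strictly above $c_0$ but below a threshold at which the profile decomposition forbids splitting, and consume the smallness of $R_0$ in the upper bound --- is exactly the route the paper takes: your lower bound via the barycenter set $\mathscr N\cap\{\xi=0\}$ corresponds to Lemmas~\ref{Lemma 7.1} and~\ref{Lemma 7.2}, and your estimate $F(y)\le(\mathrm{const})\,R_0\,m_\infty$ is Lemma~\ref{Lemma 7.5}.

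The genuine gap is in your compactness window. You squeeze $m_\infty<c<2m_\infty$ and then assert that $(PS)_c$ holds ``provided $I_\infty$ has no critical value inside this window --- a property again guaranteed by the analysis of the limit system under (1)--(3).'' That is not what hypotheses (1)--(3) deliver. The uniqueness and nondegeneracy results for the limit system (Lemmas~\ref{lem-3.2} and~\ref{lem-3.5}) yield, via Lemma~\ref{Lemma 7.3}, only that $c_0$ is an \emph{isolated} critical level of $\Phi_0$; they say nothing about the absence of further critical values anywhere in $(c_0,2c_0)$ --- for instance those coming from sign-changing bound states such as the synchronized solutions $z_{3,4}$, or from non-synchronized solutions. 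If such a level exists below $2c_0$ and your min-max value lands at or above it, the splitting alternative in the profile decomposition is not excluded and compactness at level $c$ can fail. The paper's fix is to set $d_0=\inf\{d>c_0: d \text{ is a critical level of } \Phi_0\}$ (which is $>c_0$ precisely by isolatedness) and to work below $\tilde c_0=\min\{d_0,2c_0\}$; correspondingly the quantitative hypothesis is $R_0<\tilde c_0/c_0$, not $R_0<2$. As written, your argument proves the theorem only under the stronger, unverified assumption that $\Phi_0$ has no critical values in $(c_0,2c_0)$. The remaining steps of your outline (boundedness of $(PS)$ sequences from $\kappa_0+\sup\kappa<1$, exclusion of the escaping single bump by the barycenter obstruction, the role of $R_0$ in the upper estimate) are sound and coincide with the paper's argument.
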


\begin{remark}
\begin{itemize}
\item [(1)] We give a more precise assumption on $R_0$ (see Lemma \ref{Lemma 7.5}), which is not stated here in order to simplify notation.
\item [(2)] A key step in the proof of Theorem \ref{th1.2} is showing the uniqueness and nondegeneracy of the positive solution of the limit system
\begin{equation}\label{a-8}
\begin{cases}
- \Delta
u+u=a_0|u|^{p-2}u+\beta_0|u|^{\frac{p}{2}-2}u|v|^{\frac{p}{2}}
+\kappa_0v,\\
- \Delta v+v=b_0|v|^{p-2}v+\beta_0|u|^{\frac{p}{2}}
|v|^{\frac{p}{2}-2}v+\kappa_0u,\\
(u,v) \in H^1(\mathbb{R}^N) \times H^1(\mathbb{R}^N).
\end{cases}
\end{equation}
Compared to the paper \cite{ACR-1}, we have more coupled terms
here, namely $|u|^{\frac{p}{2}} |v|^{\frac{p}{2}-2}v$ and
$|u|^{\frac{p}{2}-2}u|v|^{\frac{p}{2}}$. These terms present new
difficulties for proving the uniqueness and nondegeneracy of the positive
solution of \eqref{a-8} for general $p$. Using some ideas from
\cite{Dancer-Wei-2009-TRMS,WeiYao2012-CPAA,Ikoma-2009-Nodea}, we prove this for the case $p=4$ here (see Lemma \ref{lem-3.5}). The proof for general $p$ is an interesting open problem.
\end{itemize}
\end{remark}

\begin{remark}
The main results are Theorems \ref{th1.1} and \ref{th1.2}. For the limit system \eqref{c-1} used to prove the main results, Lemmas \ref{lem-3.1} -- \ref{lem-3.5} are new and of independent interest.
\end{remark}

The paper is organized as follows. In Section 2, we give some
notations and preliminaries. In section 3, we study the existence
and asymptotic behavior of the positive solution of the limit system
\eqref{c-1}, and consider the uniqueness and nondegeneracy of
the positive solution of \eqref{c-1} when $p=4$ and $a_0, b_0$
are positive constants. In section 4, we give a concentration
compactness result. In section 5, we study existence of ground state
solutions for a functional-analytic model of our problem. In section
6, we give several sufficient conditions for the existence of ground state solutions.
In section 7, we prove the existence of bound state solutions of
system \eqref{auto} when $p = 4$ and $a_0, b_0$ are positive
constants.
\par

\section{Preliminaries}

\setcounter{section}{2} \setcounter{equation}{0}

We will use the following notations:
\begin{itemize}
\item for a positive function or constant $M$, $\|\cdot\|_M$ is the equivalent norm on $H^1(\mathbb{R}^N)$ defined by $\di\|u\|_{M}^{2}=\int_{\mathbb{R}^{N}}(|\nabla u|^{2}+M|u|^{2})$;
 \item $\|(u,v)\|_E=(\|u\|^{2}+\|v\|^{2})^{1/2}$ is the norm of $E=H^1(\mathbb{R}^{N})\times H^1(\mathbb{R}^N)$, where $\|\cdot\|$
 is a norm on $H^1(\mathbb{R}^N)$;
\item for $1\le p<\infty$, $|\cdot|_p$ is the usual norm of $L^p(\mathbb{R}^N)$ defined by $|u|_p=\ds\left(\int_{\mathbb{R}^N}|u|^p\right)^{1/p}$;
\item $2^*$ is the critical Sobolev exponent given by $2^{*}=\frac{2N}{N-2}$ if $N\geq3$, and $2^{*}=\infty$ if $N=1,\, 2$;
\item $C_i,\, i=1,2,\dots$ denote positive constants.
\end{itemize}

The energy functional associated with the system \eqref{auto} is given by
\begin{equation}\label{b-1}
\begin{split}
\Phi(u,v)&=\frac{1}{2}(\|u\|^{2}+\|v\|^2)-\frac{1}{p}\int_{\mathbb{R}^{N}}\left[(a_0(x)+a(x))|u|^{p}
+(b_0(x)+b(x))|v|^{p}\right]\\
&\quad-\frac{2}{p}\int_{\mathbb{R}^{N}}(\beta_0+\beta(x))|u|^{\frac{p}{2}}|v|^{\frac{p}{2}}-\int_{\mathbb{R}^{N}}
(\kappa_0+\kappa(x))uv,~ \forall(u,v) \in H^1(\mathbb{R}^N) \times H^1(\mathbb{R}^N).
\end{split}
\end{equation}
To obtain nontrivial solutions of \eqref{auto}, we use the associated Nehari manifold
\begin{equation}\label{b-2}
\mathscr{N}=\{z=(u,v)\in E\setminus\{(0,0)\}: \Phi'(u,v)(u,v)=0\}.
\end{equation}
Clearly, $\Phi\in C^{2}(E,\mathbb{R})$ and all nontrivial
critical points of $\Phi$ are on $\mathscr{N}$. For $(u,v)\in\mathscr{N}$,
\begin{equation}\label{b-3}
\begin{split}
\|u\|^{2}+\|v\|^2&=\int_{\mathbb{R}^{N}}\left[(a_0(x)+a(x))|u|^{p}
+(b_0(x)+b(x))|v|^{p}\right]\\
&\quad+2\int_{\mathbb{R}^{N}}(\beta_0+\beta(x))|u|^{\frac{p}{2}}|v|^{\frac{p}{2}}+2\int_{\mathbb{R}^{N}}
(\kappa_0+\kappa(x))uv\\
&\leq
C\, (\|u\|^p+\|v\|^p)+(\kappa_0+\sup\kappa(x))(\|u\|^2+\|v\|^2)
\end{split}
\end{equation}
by the Sobolev inequality. Since $\kappa_0+\sup\kappa(x)<1$ and $p > 2$, it follows from this that
$\|u\|+\|v\|\geq\sigma>0$ for all $(u,v)\in\mathscr{N}$, so $\mathscr{N}$ is uniformly bounded away from the origin in $E$.

Set
\[
c=\inf_{(u,v)\in\mathscr{N}}\, \Phi(u,v).
\]
A pair of functions $(u,v)\in\mathscr{N}$ such that $\Phi(u,v)=c$ will be called
a ground state solution of \eqref{auto}. We have
\begin{equation}\label{b-5}
\begin{split}
\Phi|_{\mathscr{N}}(u,v)&=\left(\frac{1}{2}-\frac{1}{p}\right)\bigg[\int_{\mathbb{R}^{N}}\left[(a_0(x)+a(x))|u|^{p}
+(b_0(x)+b(x))|v|^{p}\right]\\
&\quad+2\int_{\mathbb{R}^{N}}(\beta_0+\beta(x))|u|^{\frac{p}{2}}|v|^{\frac{p}{2}}\bigg]\\
&=\left(\frac{1}{2}-\frac{1}{p}\right)\left[\|u\|^{2}+\|v\|^{2}-2\int_{\mathbb{R}^{N}}(\kappa_0+\kappa(x))uv\right]\\
&\geq\left(\frac{1}{2}-\frac{1}{p}\right)\left[\|u\|^{2}+\|v\|^{2}-2\, (\kappa_0+\sup \kappa(x))\|u\|\|v\|\right]\\
&\geq\left(\frac{1}{2}-\frac{1}{p}\right)[1-(\kappa_0+\sup \kappa(x))]\left[\|u\|^{2}+\|v\|^{2}\right].
\end{split}
\end{equation}
Since $\kappa_0+\sup \kappa(x)<1$, it follows that $c>0$. First we
have the following lemma regarding the role of $c$.

\begin{lemma}\label{lem-2.1}
If $c$ is attained at  $z\in\mathscr{N}$, then $z$ is a solution of
\eqref{auto}.
\end{lemma}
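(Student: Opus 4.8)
The plan is to run the standard Nehari‑manifold Lagrange multiplier argument. Set $J(w) := \Phi'(w)(w)$ for $w = (u,v) \in E$, so that $\mathscr{N} = \{w \in E \setminus \{(0,0)\} : J(w) = 0\}$, and $J \in C^1(E,\mathbb{R})$ since $\Phi \in C^2(E,\mathbb{R})$. Given $z = (u,v) \in \mathscr{N}$ with $\Phi(z) = c$, I would proceed in two steps: first compute $J'(z)(z)$ and show it does not vanish, and then invoke the Lagrange multiplier rule to conclude $\Phi'(z) = 0$.

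For the first step, write
\[
A := \int_{\mathbb{R}^N}\big[(a_0(x)+a(x))|u|^p + (b_0(x)+b(x))|v|^p\big] + 2\int_{\mathbb{R}^N}(\beta_0+\beta(x))\,|u|^{p/2}|v|^{p/2}.
\]
Differentiating $J$ term by term, the quadratic part $\|u\|^2 + \|v\|^2$ contributes $2(\|u\|^2+\|v\|^2)$, the degree‑$p$ part $-A$ contributes $-pA$ by homogeneity, and $-2\int_{\mathbb{R}^N}(\kappa_0+\kappa(x))uv$ contributes $-4\int_{\mathbb{R}^N}(\kappa_0+\kappa(x))uv$; hence $J'(z)(z) = 2(\|u\|^2+\|v\|^2) - pA - 4\int_{\mathbb{R}^N}(\kappa_0+\kappa(x))uv$. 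Substituting the Nehari identity \eqref{b-3}, namely $\|u\|^2+\|v\|^2 = A + 2\int_{\mathbb{R}^N}(\kappa_0+\kappa(x))uv$, everything collapses to $J'(z)(z) = (2-p)A$. By \eqref{b-5} one has $c = \Phi(z) = \big(\tfrac12 - \tfrac1p\big)A$, and since $c > 0$ this forces $A = \tfrac{2p}{p-2}\,c > 0$; as $p > 2$ it follows that $J'(z)(z) = (2-p)A < 0$, and in particular $J'(z) \ne 0$.

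For the second step, $J'(z) \ne 0$ implies that $\mathscr{N} = J^{-1}(0)\setminus\{(0,0)\}$ is a $C^1$ Banach submanifold of $E$ of codimension one near $z$, and $z$ is a global, hence local, minimizer of $\Phi$ restricted to it, so the Lagrange multiplier rule provides $\mu \in \mathbb{R}$ with $\Phi'(z) = \mu\, J'(z)$. Evaluating at $z$ and using $z \in \mathscr{N}$ gives $0 = \Phi'(z)(z) = \mu\, J'(z)(z) = \mu(2-p)A$, whence $\mu = 0$ since $(2-p)A \ne 0$; thus $\Phi'(z) = 0$, so $z$ is a weak solution of \eqref{auto} (and a classical one by standard elliptic regularity). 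The argument is otherwise routine; the one point that genuinely uses the hypotheses is the strict inequality $c > 0$ — equivalently $A > 0$ — which rests on $\kappa_0 + \sup_x \kappa(x) < 1$ from $(A_0)$ via \eqref{b-5}, and this is also the only thing that could obstruct the verification of $J'(z)(z) \ne 0$.
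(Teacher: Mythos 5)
Your proposal is correct and follows essentially the same route as the paper: both compute $G'(z)(z)=J'(z)(z)=(2-p)\bigl[\|u\|^2+\|v\|^2-2\int_{\mathbb{R}^N}(\kappa_0+\kappa(x))uv\bigr]=(2-p)A<0$ using the Nehari identity, and then kill the Lagrange multiplier by testing $\Phi'(z)=\ell G'(z)$ against $z$. The only cosmetic difference is that you derive $A>0$ from $c>0$, while the paper gets the same positivity directly from the estimate \eqref{b-5} via $\kappa_0+\sup\kappa(x)<1$; these are equivalent.
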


\begin{proof}
Assume that $z_{0}=(u_{0},v_{0})\in\mathscr{N}$ is such that
$\Phi(u_{0},v_{0})=c$. According to \cite[Theorem
4.1.1]{ChangKungChing2005},  $\mathscr{N}$ is a locally
differentiable manifold and so there exists a Lagrange multiplier
$\ell\in\mathbb{R}$ such that
\begin{equation}\label{b-6}
\Phi'(u_{0},v_{0})=\ell G'(u_{0},v_{0}),
\end{equation}
where $G(u,v)=\Phi'(u,v)(u,v)$. We infer from
$z_{0}=(u_{0},v_{0})\in\mathscr{N}$ and $\kappa_0+\sup \kappa(x)<1$
that
\begin{equation}\label{b-7}
\begin{split}
G'(u_{0},v_{0})(u_{0},v_{0})&=2(\|u_{0}\|^{2}+\|v_{0}\|^{2})
-p\int_{\mathbb{R}^{N}}\left[(a_0(x)+a(x))|u_0|^{p}
+(b_0(x)+b(x))|v_0|^{p}\right]\\
&\quad-2p\int_{\mathbb{R}^{N}}(\beta_0+\beta(x))|u_0|^{\frac{p}{2}}|v_0|^{\frac{p}{2}}-4\int_{\mathbb{R}^{N}}
(\kappa_0+\kappa(x))u_0v_0\\
&=(2-p)\left[(\|u_0\|^2+\|v_0\|^2)-2\int_{\mathbb{R}^{N}}
(\kappa_0+\kappa(x))u_0v_0\right]<0.
\end{split}
\end{equation}
Testing the equation \eqref{b-6} with $(u_{0},v_{0})$, it follows from \eqref{b-7} that $\ell=0$. Thus, we
have $\Phi'(u_{0},v_{0})=0$, i.e., $z_{0}$ is a critical point of
$\Phi$.
\end{proof}

\section{The limit equations}

\setcounter{section}{3} \setcounter{equation}{0}

\subsection{Ground state solution}

In this subsection we study the existence and asymptotic
behavior of the positive solution of the limit system
\begin{equation}\label{c-1}
\begin{cases}
- \Delta
u+u=a_0(x)|u|^{p-2}u+\beta_0|u|^{\frac{p}{2}-2}u|v|^{\frac{p}{2}}
+\kappa_0v,\quad x\in\mathbb{R}^{N},\\
- \Delta v+v=b_0(x)|v|^{p-2}v+\beta_0|u|^{\frac{p}{2}}
|v|^{\frac{p}{2}-2}v+\kappa_0u,\quad x\in\mathbb{R}^{N},
\end{cases}
\end{equation}
where $2<p<2^*$, $a_0(x)$ and $b_0(x)$ are $1$-periodic positive
functions. The energy functional corresponding to \eqref{c-1} is
defined by
\begin{equation}\label{c-2}
\begin{split}
\Phi_0(u,v)&=\frac{1}{2}(\|u\|^{2}+\|v\|^2)-\frac{1}{p}\int_{\mathbb{R}^{N}}\left[a_0(x)|u|^{p}
+b_0(x)|v|^{p}\right]\\
&\quad-\frac{2}{p}\int_{\mathbb{R}^{N}}\beta_0|u|^{\frac{p}{2}}|v|^{\frac{p}{2}}-\int_{\mathbb{R}^{N}}
\kappa_0uv.
\end{split}
\end{equation}
The corresponding Nehari manifold is
\begin{equation}\label{c-3}
\mathscr{N}_0=\{z=(u,v)\in E\setminus\{(0,0)\}:
\Phi'_0(u,v)(u,v)=0\}.
\end{equation}
Clearly, $\Phi_0\in C^{2}(E,\mathbb{R})$ and all nontrivial
solutions are contained in $\mathscr{N}_0$. Set
\begin{equation}\label{c-4}
c_0=\inf_{(u,v)\in\mathscr{N}_0}\, \Phi_0(u,v).
\end{equation}
As in \eqref{b-3}-\eqref{b-5}, one can show that if $0<\kappa_0<1$, $\mathscr{N}_0$ is uniformly bounded away from the origin $(0,0)$. Moreover, if we replace $\mathscr{N}$ and $c$ by $\mathscr{N}_0$ and $c_0$, respectively, the conclusion of Lemma \ref{lem-2.1} remains true for $0<\kappa_0<1$.

Now we are ready to prove the existence of a ground state solution of
\eqref{c-1}.
\begin{lemma}\label{lem-3.1}
If $0<\kappa_0<1$, the periodic system \eqref{c-1} has a positive
ground state solution $(u,v)\in\mathscr{N}_0$.
\end{lemma}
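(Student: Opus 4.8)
The plan is to apply the concentration--compactness method to a minimizing sequence for $c_0$ on $\mathscr{N}_0$, exploiting the $\Z^N$-periodicity of $a_0, b_0$ to recover compactness up to translations. First I would take a minimizing sequence $(u_n, v_n) \in \mathscr{N}_0$ with $\Phi_0(u_n, v_n) \to c_0$. By the Ekeland variational principle (or a standard deformation argument on the manifold) we may assume $(u_n, v_n)$ is also a Palais--Smale sequence for $\Phi_0$ constrained to $\mathscr{N}_0$, hence $\Phi_0'(u_n, v_n) \to 0$ in $E^*$ after using that the Lagrange multiplier vanishes (the computation \eqref{b-7}, which carries over to $\Phi_0$ since $0 < \kappa_0 < 1$). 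From the analogue of \eqref{b-5} the sequence is bounded in $E$, and since $c_0 > 0$ and $\mathscr{N}_0$ is uniformly bounded away from the origin, we have $\liminf_n (\|u_n\|^2 + \|v_n\|^2) > 0$.

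Next I would rule out vanishing. If $\sup_{y \in \R^N} \int_{B_1(y)} (|u_n|^2 + |v_n|^2) \to 0$, then by Lions' vanishing lemma $u_n, v_n \to 0$ in $L^p(\R^N)$; but then the right-hand side of the Nehari identity for $(u_n, v_n)$ forces $\|u_n\|^2 + \|v_n\|^2 - 2\kappa_0 \int u_n v_n \to 0$, which together with $0 < \kappa_0 < 1$ contradicts $\|u_n\| + \|v_n\| \ge \sigma > 0$. So there exist $\delta > 0$ and $y_n \in \Z^N$ (rounding off the maximizing point, which is legitimate by periodicity of the whole problem) with $\int_{B_1(y_n)} (|u_n|^2 + |v_n|^2) \ge \delta$. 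Translating, set $\tilde u_n = u_n(\cdot + y_n)$, $\tilde v_n = v_n(\cdot + y_n)$: because $a_0, b_0$ are $\Z^N$-periodic, $\Phi_0$ and $\mathscr{N}_0$ are invariant under these integer translations, so $(\tilde u_n, \tilde v_n) \in \mathscr{N}_0$ is still a minimizing Palais--Smale sequence. Passing to a subsequence, $\tilde u_n \rightharpoonup u$, $\tilde v_n \rightharpoonup v$ weakly in $H^1$ and strongly in $L^2_{loc}$, and the lower bound on the local mass gives $(u, v) \ne (0,0)$. Weak continuity of $\Phi_0'$ (all nonlinear terms are subcritical, so they pass to the limit on bounded sets via Rellich) yields $\Phi_0'(u, v) = 0$, so $(u, v)$ is a nontrivial solution and hence $(u, v) \in \mathscr{N}_0$, giving $\Phi_0(u, v) \ge c_0$.

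For the reverse inequality I would use the fact that on $\mathscr{N}_0$ the functional equals $\left(\frac12 - \frac1p\right)\big[\int (a_0|u|^p + b_0|v|^p) + 2\beta_0 \int |u|^{p/2}|v|^{p/2}\big]$, an integrand to which Fatou's lemma (or Brezis--Lieb) applies along $(\tilde u_n, \tilde v_n)$; combined with $\Phi_0(\tilde u_n, \tilde v_n) \to c_0$ this gives $\Phi_0(u,v) \le c_0$, so $\Phi_0(u, v) = c_0$ and $(u, v)$ is a ground state; the argument also upgrades the weak convergence to strong. Finally, to obtain positivity, observe that replacing $(u, v)$ by $(|u|, |v|)$ does not increase any term in $\Phi_0$ (here the assumption $\kappa_0 > 0$ is essential: it makes $-\kappa_0 \int uv \ge -\kappa_0 \int |u||v|$, and likewise $\beta_0 \ge 0$ is assumed), so $(|u|, |v|)$ is again a minimizer; then the strong maximum principle applied to each equation in \eqref{c-1}, using $\kappa_0 > 0$ so the coupling term is nonnegative, forces $u, v > 0$ on all of $\R^N$.

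The main obstacle is the compactness step: proving that translation invariance under $\Z^N$ together with the exclusion of vanishing actually produces a nontrivial weak limit that is a \emph{minimizer} rather than merely a nontrivial solution at some level $\ge c_0$ — in particular, controlling the possible loss of mass (dichotomy/splitting into profiles at infinity) and showing no energy escapes. Here the fact that $\beta_0$ may be negative needs care in the Fatou/Brezis--Lieb bookkeeping, but since on the Nehari manifold the energy reduces to the positive quantity displayed above (positivity of that bracket follows from the Nehari identity and $\kappa_0 < 1$), the splitting lemma still yields $c_0 \ge \Phi_0(u,v)$, closing the argument.
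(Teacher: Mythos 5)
Your overall strategy --- a minimizing sequence made Palais--Smale via Ekeland, vanishing of the Lagrange multiplier, exclusion of vanishing by Lions' lemma, integer translations exploiting the periodicity of $a_0,b_0$, and passage to a nontrivial weak limit which is a critical point by weak-to-weak continuity of $\Phi_0'$ --- is exactly the paper's. Two steps of your execution, however, have gaps. The first is the reverse inequality $\Phi_0(u,v)\le c_0$: you apply Fatou (or Brezis--Lieb) to $\int\bigl[a_0|u|^p+b_0|v|^p+2\beta_0|u|^{p/2}|v|^{p/2}\bigr]$, but this integrand is pointwise nonnegative only when $\beta_0^2\le a_0(x)b_0(x)$ (by Young's inequality); for $\beta_0$ sufficiently negative it changes sign, Fatou does not apply, and positivity of the \emph{integral} along the sequence (which is what the Nehari identity gives you) does not rescue a pointwise Fatou argument. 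The lemma assumes only $0<\kappa_0<1$, with $\beta_0\in\R$ arbitrary. The paper sidesteps this by using the \emph{other} expression for $\Phi_0$ on $\mathscr{N}_0$, namely $\left(\frac12-\frac1p\right)\left[\|u\|^2+\|v\|^2-2\kappa_0\int uv\right]=\left(\frac12-\frac1p\right)\|(u,v)\|_{\kappa_0}^2$ with the equivalent Hilbert norm \eqref{c-13}, whose weak lower semicontinuity gives the inequality for every $\beta_0$. You should switch to that form; it also removes the "splitting/loss of mass" worry you raise at the end.

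The second gap is in the positivity step: you assert that $(|u|,|v|)$ "is again a minimizer" because passing to absolute values does not increase $\Phi_0$. But $(|u|,|v|)$ need not lie on $\mathscr{N}_0$: $\||u|\|=\|u\|$ while $\int|u||v|\ge\int uv$, so the Nehari identity is in general destroyed, and a point off the constraint with smaller energy proves nothing. One must first rescale: there is a unique $t>0$ with $t(|u|,|v|)\in\mathscr{N}_0$, the computation \eqref{c-15} gives $t\le1$, and then $\Phi_0(t|u|,t|v|)=t^2\left(\frac12-\frac1p\right)\|(|u|,|v|)\|_{\kappa_0}^2\le c_0$ forces $t=1$. (Your parenthetical that $\beta_0\ge0$ is needed here is also off: the term $\beta_0\int|u|^{p/2}|v|^{p/2}$ is unchanged under $u\mapsto|u|$ whatever the sign of $\beta_0$; only the $\kappa_0$-term changes, and favorably since $\kappa_0>0$.) With these two repairs your argument coincides with the paper's.
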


\begin{proof}
Let $w_0$ denote the positive solution of $-\Delta
u+u=a_0(x)|u|^{p-2}u, u\in H^{1}(\mathbb{R}^{N})$. Then
$(w_0,0)\in\mathscr{N}_0$, and $\mathscr{N}_0\neq\emptyset$. Let
$\{(u_{n},v_{n})\}\subset\mathscr{N}_0$ be a minimizing sequences.
By using the Ekeland's variational principle type arguments(see
\cite[Lemma 3.10]{wangjun2014pre} or \cite{Willem1996book}), we can
assume that there exists a subsequence of
$\{(u_{n},v_{n})\}\subset\mathscr{N}_0$(still denote by
$(u_{n},v_{n})$) such that
\begin{equation}\label{c-5}
\Phi_0(u_{n},v_{n})\rightarrow c_{0},\quad
\Phi_0'|_{\mathscr{N}_0}(u_{n},v_{n})\rightarrow0.
\end{equation}
Similar to \eqref{b-3} and \eqref{b-5}, it follows from
$\kappa_0+\sup \kappa(x)<1$ that
$0<\sigma\leq\|u_{n}\|+\|v_{n}\|\leq C_1$. We claim that
$\Phi_0'(u_{n},v_{n})\rightarrow0$ as $n\rightarrow\infty$. Indeed,
it is clear that
\begin{equation}\label{c-6}
o(1)=\Phi_0|_{\mathscr{N}_0}'(u_{n},v_{n})=\Phi_0'(u_{n},v_{n})-\ell_{n}G'_{0}(u_{n},v_{n}),
\end{equation}
where $\ell_{n}\in\mathbb{R}$ and $G_0(u,v)=\Phi_0'(u,v)(u,v)$. As
in Lemma \ref{lem-2.1}, one can check that
$G_0(u_n,v_n)(u_n,v_n)\leq-C_2<0$. So, we know that $\ell_{n} \to 0$ in
\eqref{c-6}. Thus, it follows that
\begin{equation}\label{c-7}
\Phi_0(u_{n},v_{n})\rightarrow c_{0},\quad
\Phi_0'(u_{n},v_{n})\rightarrow0.
\end{equation}
From the boundedness of $\{(u_n,v_n)\}$, without loss of generality
we assume that $u_{n}\rightharpoonup u_{0}$, $v_{n}\rightharpoonup
v_{0}$ in $H^{1}(\mathbb{R}^{N})$, $u_{n}\rightarrow u_{0}$ and
$v_{n}\rightarrow v_{0}$ in $L_{loc}^{p}(\mathbb{R}^{N})$, $\forall
p\in(2,2^{*})$.

We claim that $\{(u_{n},v_{n})\}$ is nonvanishing, i.e., there
exists $R>0$ such that
\begin{equation}\label{c-8}
\liminf_{n\rightarrow\infty}\int_{B_{R}(y_{n})}(u_{n}^{2}+v_{n}^{2})\geq\delta>0,
\end{equation}
where $y_{n}\in\mathbb{R}^{N}$ and
$B_{R}(y_{n})=\{y\in\mathbb{R}^{N}: |y-y_{n}|\leq R\}$. Arguing by
contradiction, if \eqref{c-8} is not satisfied, then
$\{(u_{n},v_{n})\}$ is vanishing, i.e.,
\begin{equation}\label{c-9}
\lim_{n\rightarrow\infty}\sup_{y\in\mathbb{R}^{N}}\int_{B_{r}(y)}(u_{n}^{2}+v_{n}^{2})=0,\
\text{for\ all}\ r>0.
\end{equation}
According to Lions's concentration compactness lemma(see \cite[Lemma
1.21]{Willem1996book}) that $u_{n}\rightarrow0$ and
$v_{n}\rightarrow0$ in $L^{t}(\mathbb{R}^{N})(\forall
t\in(2,2^{*}))$. So, we infer from
$\Phi_0'(u_{n},v_{n})(u_{n},v_{n})=0$ that
\begin{equation}\label{c-10}
\begin{split}
\|u_n\|^{2}+\|v_n\|^2&=\int_{\mathbb{R}^{N}}\left[a_0(x)|u_n|^{p}
+b_0(x)|v_n|^{p}\right]\\
&\quad+2\int_{\mathbb{R}^{N}}\beta_0|u_n|^{\frac{p}{2}}|v_n|^{\frac{p}{2}}+2\int_{\mathbb{R}^{N}}
\kappa_0u_nv_n\rightarrow0,
\end{split}
\end{equation}
as $n\rightarrow\infty$. This contradicts with
$\|u_n\|+\|v_n\|\geq\sigma>0$. Hence, \eqref{c-8} holds. Moreover,
there exist $\{k_n\}\subset\mathbb{Z}^{N}$ and $R_0>R>0$ such that
\begin{equation}\label{c-11}
\liminf_{n\rightarrow\infty}\int_{B_{R_0}(k_{n})}(u_{n}^{2}+v_{n}^{2})\geq\frac{\delta}{2}>0.
\end{equation}

Set $\tilde{u}_{n}=u_{n}(x+k_{n})$ and
$\tilde{v}_{n}=v_{n}(x+k_{n})$. Since $a_0(x)$ and $b_0(x)$ are
$1$-periodic functions, it follows that the norms and $\Phi$ are
invariance under the translations $x\mapsto x+k_n$. Thus, we can
assume that $\tilde{u}_{n}\rightharpoonup \tilde{u}_{0}$,
$\tilde{v}_{n}\rightharpoonup \tilde{v}_{0}$ in
$H^{1}(\mathbb{R}^{N})$, $\tilde{u}_{n}\rightarrow \tilde{u}_{0}$
and $\tilde{v}_{n}\rightarrow \tilde{v}_{0}$ in
$L_{loc}^{t}(\mathbb{R}^{N})(\forall t\in(2,2^{*}))$. Moreover, it
follows from \eqref{c-11} that
\begin{equation}\label{c-12}
\liminf_{n\rightarrow\infty}\int_{B_{R_0}(0)}(\tilde{u}_{n}^{2}+\tilde{v}_{n}^{2})\geq\frac{\sigma}{2}>0.
\end{equation}
So, we have $\tilde{u}_{0}\neq0$ or $\tilde{v}_{0}\neq0$.
Furthermore, it follows from the weak continuous of $\Phi_0'$ and
\eqref{c-7} that $\Phi_0'(\tilde{u}_{0},\tilde{v}_{0})=0$ and
$\tilde{z}_{0}=(\tilde{u}_{0},\tilde{v}_{0})\in\mathscr{N}$. As in
\cite{Li-Kui-Zhangzhitao-2015-Preprint}, we define the following
inner product
\begin{equation}\label{c-13}
\left<(u_1,v_1),(u_2,v_2)\right>=\left<(u_1,v_1),(u_2,v_2)\right>_{1}-
\kappa_0\int_{\mathbb{R}^{N}}\left(u_1v_2+u_2v_1\right),
\end{equation}
where $\left<(\cdot,\cdot),(\cdot,\cdot)\right>_{1}$ denotes the
inner product in $E$. Correspondingly, the induced norm denotes by
$\|(\cdot,\cdot)\|_{\kappa_0}$. Furthermore, it follows from
$0<\kappa_0<1$ that the norms $\|(\cdot,\cdot)\|_{\kappa_0}$ and
$\|(\cdot,\cdot)\|_{E}$ are equivalent in $E$. Hence, we infer from
the weak lower semicontinuity of the norm that
\begin{equation}\label{c-14}
\begin{split}
c_0&\leq\Phi_0(\tilde{u}_{0},\tilde{v}_{0})=\left(\frac{1}{2}-\frac{1}{p}\right)\left[\|\tilde{u}_{0}\|^{2}+\|\tilde{v}_{0}\|^{2}
-2\int_{\mathbb{R}^{N}}\kappa_0\tilde{u}_{0}\tilde{v}_{0}\right]\\
&=\left(\frac{1}{2}-\frac{1}{p}\right)\|(\tilde{u}_{0},\tilde{v}_{0})\|_{\kappa_0}^2
\leq\liminf_{n\rightarrow\infty}\left(\frac{1}{2}-\frac{1}{p}\right)\|(\tilde{u}_{n},\tilde{v}_{n})\|_{\kappa_0}^2\\
&=\liminf_{n\rightarrow\infty}\left(\frac{1}{2}-\frac{1}{p}\right)\left[\|\tilde{u}_{n}\|^{2}+\|\tilde{v}_{n}\|^{2}
-2\int_{\mathbb{R}^{N}}\kappa_0\tilde{u}_{n}\tilde{v}_{n}\right]\\
&=\liminf_{n\rightarrow\infty}\Phi_0(\tilde{u}_{n},\tilde{v}_{n})=
\lim_{n\rightarrow\infty}\Phi_0(u_{n},v_{n})=c_0.
\end{split}
\end{equation}
So, $\tilde{z}_{0}=(\tilde{u}_{0},\tilde{v}_{0})\neq(0,0)$ is a
ground state solution of \eqref{c-1}. Finally, we prove that
$\tilde{z}_{0}=(\tilde{u}_{0},\tilde{v}_{0})$ is positive.
Obviously, there exists unique $t>0$ such that
$t(|\tilde{u}_{0}|,|\tilde{v}_{0}|)\in\mathscr{N}_0$. So, one sees
that
\begin{equation}\label{c-15}
\begin{split}
t^{p-2}&=\frac{\|\tilde{u}_{0}\|^{2}+\|\tilde{v}_{0}\|^{2}
-2\int_{\mathbb{R}^{N}}\kappa_0|\tilde{u}_{0}||\tilde{v}_{0}|}{\int_{\mathbb{R}^{N}}\left[a_0(x)|u|^{p}
+b_0(x)|v|^{p}+2\beta_0|u|^{\frac{p}{2}}|v|^{\frac{p}{2}}\right]}\\
&\leq\frac{\|\tilde{u}_{0}\|^{2}+\|\tilde{v}_{0}\|^{2}
-2\int_{\mathbb{R}^{N}}\kappa_0\tilde{u}_{0}\tilde{v}_{0}}{\int_{\mathbb{R}^{N}}\left[a_0(x)|u|^{p}
+b_0(x)|v|^{p}+2\beta_0|u|^{\frac{p}{2}}|v|^{\frac{p}{2}}\right]}=1.
\end{split}
\end{equation}
Furthermore, we infer from
$t(|\tilde{u}_{0}|,|\tilde{v}_{0}|)\in\mathscr{N}_0$ that
\begin{equation}\label{c-16}
\begin{split}
c_0\leq\Phi_0(t|\tilde{u}_{0}|,t|\tilde{v}_{0}|)=t^2\left(\frac{1}{2}-\frac{1}{p}\right)\|(|\tilde{u}_{0}|,|\tilde{v}_{0}|)\|_{\kappa_0}^2
\leq\left(\frac{1}{2}-\frac{1}{p}\right)\|(\tilde{u}_{0},\tilde{v}_{0})\|_{\kappa_0}^2=c_0.
\end{split}
\end{equation}
Thus, one deduces that $t=1$,
$(|\tilde{u}_{0}|,|\tilde{v}_{0}|)\in\mathscr{N}_0$ and
$\Phi(|\tilde{u}_{0}|,|\tilde{v}_{0}|)=c_0$. Hence, we can assume
that $(\tilde{u}_{0},\tilde{v}_{0})$ is a nonnegative solution of
\eqref{c-1}. Moreover, by using the maximum principle we know that
$(\tilde{u}_{0},\tilde{v}_{0})$ is a positive ground state solution of \eqref{c-1}.
\end{proof}

Next we consider the asymptotic behavior of the ground state
solution of \eqref{c-1} as the parameter $\kappa_0\rightarrow0$ in the simple case where $p=4$, and $a_0$ and $b_0$ are constants. To emphasize the dependency on $\kappa_0$, in the following we write $c_0^{\kappa_0}$, $\mathscr{N}_0^{\kappa_0}$ and $\Phi_0^{\kappa_0}$ for $c_0$, $\mathscr{N}_0$ and $\Phi_0$, respectively. We have the
following result.

\begin{proposition}\label{Pro-3.2}
Let $a_0, b_0 > 0$, $\beta_0 > 0$, $0 < \kappa_0 < 1$, and let $(u_{\kappa_0},v_{\kappa_0})$ be any positive ground state solution of \eqref{c-1}.
\begin{itemize}
\item [(1)] If $N=2$ or $3$, then $(u_{\kappa_0}, v_{\kappa_0})\rightarrow(u_{0}, v_{0})$ in $H^{1}(\mathbb{R}^{N})$ as $\kappa_0\rightarrow0$. Moreover, the following conclusions hold:
(i) If $0<\beta_{0}<\min\{a_0,b_0\}$, or $\beta_{0}>\max\{a_0,b_0\}$, or $0<a_0=b_0<1$ and $\beta_0=a_0$, then $(u_0,v_0)$ is a positive radial ground state solution of \eqref{c-1} with $\kappa_0=0$.
(ii) If $\min\{a_0,b_0\}\leq\beta_{0}\leq\max\{a_0,b_0\}$ and $a_0\neq b_0$, then one of $u_0$ and $v_0$ is zero, and $(u_0,v_0)$ is a semipositive radial ground state solution of \eqref{c-1} with $\kappa_0=0$.
(iii) If $a_0=b_0\geq1$ and $\beta_0=a_0$, then $(u_0,v_0)$ is a nonnegative radial ground state solutions of \eqref{c-1} with $\kappa_0=0$.
\item [(2)] If $N=1$, then $(u_{\kappa_0}, v_{\kappa_0})\rightharpoonup(u_{0}, v_{0})$ in $H^{1}(\mathbb{R}^{N})$ as $\kappa_0\rightarrow0$, and $(u_0, v_0)$ has the same properties as above.
\end{itemize}
\end{proposition}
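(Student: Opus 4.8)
The plan is to treat $\kappa_0\to0$ as a vanishing-coupling limit: a uniform bound, then extraction of a limit along a sequence $\kappa_n\downarrow0$, then showing the limit solves \eqref{c-1} with $\kappa_0=0$, and finally identifying it by comparing energy levels with the known structure of the $\kappa_0=0$ system. For the uniform bound, let $w_{a_0}=a_0^{-1/2}w$ and $w_{b_0}=b_0^{-1/2}w$ be the positive solutions of $-\Delta u+u=a_0u^{3}$ and of $-\Delta u+u=b_0u^{3}$. Since one component vanishes, the linear coupling drops out of the Nehari identity, so $(w_{a_0},0)$ and $(0,w_{b_0})$ lie in $\mathscr{N}_0^{\kappa_0}$ for \emph{every} $\kappa_0\in(0,1)$, whence $c_0^{\kappa_0}\le\frac14\min\{\|w_{a_0}\|^2,\|w_{b_0}\|^2\}=:c_*$ uniformly in $\kappa_0$; combined with the analogue of \eqref{b-5}, namely $\Phi_0^{\kappa_0}|_{\mathscr{N}_0^{\kappa_0}}(u,v)\ge\frac14(1-\kappa_0)\|(u,v)\|_E^2$, this gives $\|(u_{\kappa_0},v_{\kappa_0})\|_E^2\le 4c_*/(1-\kappa_0)$, bounded for $\kappa_0\le\frac12$. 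For $N\ge2$, the coefficients being constant and the system cooperative ($\beta_0,\kappa_0>0$), I may take the positive ground states radially symmetric about the origin by the moving-plane method (alternatively, translate and use the concentration-compactness argument of Lemma \ref{lem-3.1}).

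Along a sequence $\kappa_n\downarrow0$, for $N=2,3$ the radial subspace $H^1_{\mathrm{rad}}(\mathbb{R}^N)$ embeds compactly into $L^4(\mathbb{R}^N)$ (since $4<2^*$), so along a subsequence $u_{\kappa_n}\rightharpoonup u_0$, $v_{\kappa_n}\rightharpoonup v_0$ in $H^1$ with strong convergence in $L^4$; letting $n\to\infty$ in $(\Phi_0^{\kappa_n})'(u_{\kappa_n},v_{\kappa_n})=0$ — the linear terms are $O(\kappa_n)$, the cubic ones controlled by the $L^4$ convergence — shows $(u_0,v_0)$ solves \eqref{c-1} with $\kappa_0=0$, and passing to the limit in the Nehari identity yields norm convergence, hence strong $H^1$ convergence. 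The same identity rules out $(u_0,v_0)=(0,0)$, as in Lemma \ref{lem-3.1}: $L^4$-vanishing would force $\|u_{\kappa_n}\|^2+\|v_{\kappa_n}\|^2=o(1)+2\kappa_n\int_{\mathbb{R}^N}u_{\kappa_n}v_{\kappa_n}$, hence $\le o(1)(\|u_{\kappa_n}\|^2+\|v_{\kappa_n}\|^2)$, contradicting $\|u_{\kappa_n}\|+\|v_{\kappa_n}\|\ge\sigma>0$. For $N=1$ there is no compact radial embedding, so only weak convergence $u_{\kappa_n}\rightharpoonup u_0$, $v_{\kappa_n}\rightharpoonup v_0$ is available; the delicate point there is nontriviality of the even weak limit, which I would deduce from the single-bump shape of the scalar ground state $w$ (ruling out escape of mass to $\pm\infty$), after which the limit still solves \eqref{c-1} with $\kappa_0=0$.

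Next I would show $c_0^{\kappa_n}\to c_0^0:=\inf_{\mathscr{N}_0^0}\Phi_0^0$: the bound $\limsup c_0^{\kappa_n}\le c_0^0$ follows by projecting a ground state of \eqref{c-1} at $\kappa_0=0$ onto $\mathscr{N}_0^{\kappa_n}$ (the Nehari scaling parameter tends to $1$), while $\liminf c_0^{\kappa_n}\ge\Phi_0^0(u_0,v_0)\ge c_0^0$ from the previous step; hence $(u_0,v_0)$ attains $c_0^0$. Which of the alternatives (i)--(iii) occurs is then read off by comparing $c_0^0$ with the semi-trivial levels $\frac14\|w_{a_0}\|^2$ and $\frac14\|w_{b_0}\|^2$: according to whether $\beta_0$ lies strictly below, strictly above, or on the stability thresholds $\min\{a_0,b_0\}$, $\max\{a_0,b_0\}$ of the semi-trivial solutions, $(u_0,v_0)$ is forced to be semi-trivial (pinned down by uniqueness of the scalar positive solution), to have both components positive (in which case Lemma \ref{lem-3.5}, the uniqueness and nondegeneracy of the positive solution for $p=4$, identifies it), or to be merely nonnegative; strict positivity of the nonzero components then comes from the strong maximum principle and radial symmetry from the moving-plane method.

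The hard part will be the borderline regime $\beta_0=a_0=b_0$ (underlying case (iii) and the third alternative of (i)). There $(w_{a_0},0)$ is a \emph{degenerate} critical point of $\Phi_0^0$: the linearized operator in the second component is $-\Delta+1-w^2$, whose kernel is one-dimensional and spanned by the positive function $w$, and in fact the whole circle $\{(\cos\theta\,\phi,\sin\theta\,\phi):\theta\in[0,\frac{\pi}{2}]\}$ with $\phi=(2a_0)^{-1/2}w$ of positive solutions of \eqref{c-1} at $\kappa_0=0$ is degenerate, so a level comparison alone cannot single out $(u_0,v_0)$. To see which member of this circle is selected as $\kappa_0\to0$ I would carry out a finite-dimensional (Lyapunov--Schmidt) reduction along the family, with the perturbation $-\kappa_0\int_{\mathbb{R}^N}uv$ furnishing the leading-order reduced functional (maximized at the symmetric configuration $u=v$); it is precisely here that the threshold $a_0<1$ versus $a_0\ge1$ enters, through the sign and size of the next-order terms in the reduced functional, deciding whether one can conclude strict positivity or only nonnegativity. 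The remaining steps — uniform bound, compactness for $N\ge2$, passage to the limit, level convergence — are routine by comparison.
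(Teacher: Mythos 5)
Your overall scheme -- uniform bound on the Nehari level, radial symmetry by moving planes, extraction along $\kappa_n\downarrow 0$ using the compact radial embedding for $N=2,3$, nonvanishing via the Nehari identity, convergence of levels $c_0^{\kappa_n}\to c_0^0$, and then a case analysis driven by Sirakov's classification -- is exactly the paper's route. Your uniform bound is in fact cleaner: you test with the semitrivial elements $(w_{a_0},0),(0,w_{b_0})\in\mathscr{N}_0^{\kappa_0}$, whereas the paper derives monotonicity of $\kappa_0\mapsto c_0^{\kappa_0}$ by projecting one ground state onto the other Nehari manifold; both give the needed a priori bound.

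The gaps are in the endgame. First, your plan to ``read off'' the alternatives by comparing $c_0^0$ with the semitrivial levels cannot settle the sub-case $0<\beta_0<\min\{a_0,b_0\}$: the semitrivial states lie on $\mathscr{N}_0^0$, so $c_0^0$ is automatically $\le$ both semitrivial levels, and for small $\beta_0$ it is not strictly below them (e.g.\ for $a_0=b_0=1$ the synchronized solution has energy $\|w\|^2/(2(1+\beta_0))$, which \emph{exceeds} the semitrivial level $\|w\|^2/4$ when $\beta_0<1$). A level comparison therefore cannot exclude that one component of the limit vanishes. The paper instead imports Sirakov's Proposition~3.3, which gives uniform lower bounds $\int u_n^4,\int v_n^4\ge\delta_0>0$ \emph{separately in each component} along the approximating family; combined with the strong $L^4$ convergence this forces both $u_0\neq 0$ and $v_0\neq 0$. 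Some such componentwise compactness input is indispensable here and is missing from your argument. (Relatedly, the correspondence you state between the three $\beta_0$-regimes and the three outcomes is scrambled: it is the intermediate regime $\min\le\beta_0\le\max$, $a_0\ne b_0$, that yields the semitrivial limit, while $\beta_0$ strictly below $\min$ or strictly above $\max$ yields both components positive.)

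Second, for the borderline case $\beta_0=a_0=b_0$ your diagnosis of the degenerate circle of solutions $(\cos\theta\,\phi,\sin\theta\,\phi)$, all at the same energy, is correct and is precisely why an energy comparison alone cannot select the limit; but the Lyapunov--Schmidt reduction you propose to resolve it is only announced, not carried out, and the claim that the dichotomy $a_0<1$ versus $a_0\ge 1$ would emerge ``from the sign and size of the next-order terms'' is unsubstantiated (note that the leading perturbation $-\kappa_0\int uv$ selects $\theta=\pi/4$ irrespective of the size of $a_0$, so the mechanism you describe does not by itself produce that threshold). The paper handles $0<a_0=b_0<1$ by a direct comparison of $c_0^0$ with the semitrivial level via Sirakov's Proposition~3.2, and for $a_0=b_0\ge 1$ it simply does not claim positivity (conclusion (iii) asserts only nonnegativity). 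As written, your proof of case (i), third alternative, and of case (iii) is incomplete.
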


\begin{proof}
For $a_0, b_0>0$, we take $\kappa_0^{n}>0$ such that
$\kappa_0^{n}\rightarrow0$ as $n\rightarrow\infty$. Let $(u_{n},
v_{n}):=(u_{\kappa_0^{n}}, v_{\kappa_0^{n}})$ denote the positive
ground state solution of \eqref{c-1} with $\kappa=\kappa_0^{n}$. We
first claim that $c_0^{\kappa_0}$ is a decreasing function on
$\kappa_0$. In fact, for $0<\kappa_0^{1}\leq\kappa_0^{2}$, we let
$(u_{1},v_{1})$ and $(u_{2},v_{2})$ denote the positive ground state
solution corresponding to $\kappa_0=\kappa_0^{1}$ and
$\kappa_0=\kappa_0^{2}$ respectively. Then there exists unique
$t_{1}(\kappa_0^{1})>0$ such that
$t_{1}(\kappa_0^{1})(u_{2},v_{2})\in\mathscr{N}_0^{\kappa_0^{1}}$.
As in \eqref{c-15} we know that $t_{1}(\kappa_0)$ is a decreasing
function on $\kappa_0$. Hence, we infer from
$(u_{2},v_{2})\in\mathscr{N}_0^{\kappa_0^{2}}$ that
$\hat{t}_{1}:=t_{1}(\kappa_0^{1})\leq t_{1}(\kappa_0^{2})=1$, where
$t_{1}(\kappa_0^{2})$ satisfies
$t_{1}(\kappa_0^{2})(u_{2},v_{2})\in\mathscr{N}_0^{\kappa_0^{2}}$.
So, it follows that
\begin{equation}\label{c-17}
\Phi_0^{\kappa_0^{1}}(u_{1},v_{1})\leq\Phi_0^{\kappa_0^{1}}(\hat{t}_{1}u_{2},\hat{t}_{1}v_{2})=\frac{\hat{t}_{1}^{2}}{6}\|(u_{2},v_2)\|_{\kappa_0}^{2}
\leq\frac{1}{6}\|(u_{2},v_2)\|_{\kappa_0}^{2}=\Phi_0^{\kappa_0^{2}}(u_{2},v_{2}).
\end{equation}
Thus, the claim holds. Hence, we know that
\begin{equation}\label{c18}
c_0^{\kappa_0^{1}}\leq
c_0^{\kappa_0^{n}}=\left(\frac{1}{2}-\frac{1}{p}\right)\|(u_{n},v_{n})\|_{\kappa_0^{n}}^2
\leq c_0^{\kappa_0^{n+1}}\leq\cdot\cdot\cdot\leq c_0^{0}.
\end{equation}
This implies that $\{(u_n, v_n)\}$ is bounded in $E$. In addition,
since $\beta_0>0$ and $\kappa_0^{n}>0$ close to zero, by the moving
plane method (see \cite[Theorem 2]{Busca-Sirakov-2000-JDE}), $u_n$
and $v_n$ must be radially symmetric and strictly decreasing
functions.

We first consider the case $2\leq N\leq3$. Without loss of
generality we assume that
$(u_{n},v_{n})\rightharpoonup(u_{0},v_{0})$ in
$E_{r}=H_{r}^{1}(\mathbb{R}^{N})\times H_{r}^{1}(\mathbb{R}^{N})$,
and $(u_{n},v_{n})\rightarrow(u_{0},v_{0})$ in
$L_{r}^{p}(\mathbb{R}^{N})\times L_{r}^{p}(\mathbb{R}^{N})$$(\forall
p\in(2,2^{*}))$. Moreover, $u_{0}, v_{0}\geq0$ in $\mathbb{R}^{N}$.
For each $(\varphi,\phi)\in C_{0}^{\infty}(\mathbb{R}^{N})\times
C_{0}^{\infty}(\mathbb{R}^{N})$, one has that
\begin{equation}\label{c19}
\begin{split}
\left(\Phi_{0}^{\kappa_0^{n}}\right)'(u_{n},v_{n})(\varphi,\phi)&=\int_{\mathbb{R}^{N}}
[\nabla u_{n}\nabla\varphi+u_{n}\varphi+\nabla
v_{n}\nabla\phi+v_{n}\phi-3(a_0u_{n}^{2}\varphi+b_0v_{n}^{2}\phi)]\\
&\quad-\beta_0\int_{\mathbb{R}^{N}}(u_{n}v_n^{2}\varphi+v_{n}u_n^{2}\phi)+\kappa_0^{n}\int_{\mathbb{R}^{N}}
(v_{n}\varphi+u_{n}\phi)\\
&=\int_{\mathbb{R}^{N}} [\nabla u\nabla\varphi+u\varphi+\nabla
v\nabla\phi+v\phi-3(a_0u^{2}\varphi+b_0v^{2}\phi)]\\
&\quad-\beta_0\int_{\mathbb{R}^{N}}(uv^{2}\varphi+vu^{2}\phi).
\end{split}
\end{equation}
Thus, $(u_{0},v_{0})$ satisfies \eqref{c-1} with $\kappa_0=0$.
Moreover, as in \eqref{c18} one infers that
\begin{equation}\label{c20}
c_0^{\kappa_0^{n}}=\Phi_0^{\kappa_0^{n}}(u_n,v_n)\rightarrow
c_0^{0}\quad\text{and}\quad
\left(\Phi_0^{\kappa_0^{n}}\right)'(u_n,v_n)=0.
\end{equation}
As in Lemma \ref{lem-3.1}, we infer that
$\int_{\mathbb{R}^{N}}(u_{n}^{4}+v_n^4)\geq
\int_{B_{R}(y_n)}(u_{n}^{4}+v_n^4)\geq\delta>0$. Thus, from
Brezis-Lieb lemma(see \cite{Willem1996book}) we infer that
$\int_{\mathbb{R}^{N}}(u_{n}^{4}+v_n^4)\rightarrow\int_{\mathbb{R}^{N}}(u_{0}^{4}+v_0^4)$,
and $\int_{\mathbb{R}^{N}}(u_{0}^{4}+v_0^4)\geq\delta>0$. Hence, we
know that at least one of $u_{0}$ and $v_0$ is not equal to zero.
Moreover, we infer from \eqref{c20} that $(u_0,v_0)$ is a
nonnegative ground state solution of \eqref{c-1} with $\kappa_0=0$.
To make it clear we divide into the following cases:
\begin{itemize}
  \item [(1)] If $\beta_{0}>\max\{a_0,b_0\}$, as in the proof of
  \cite[Theorem 1]{Sirakov-2007-CMP}, we know that \eqref{c-1} has
  positive radial ground state solution with $\kappa_0=0$. Moreover,
  $C_0^0<\frac{S_{1}^2}{4}\max\{\frac{1}{a_0^2},\frac{1}{b_0^2}\}$.
  Thus, $u_0\not\equiv0$ and $v_0\not\equiv0$ is a nonnegative
 radial ground state solution of \eqref{c-3} with $\kappa_0=0$. By
 maximum principal, one deduces that $(u_0,v_0)$ is a positive
 radial ground state solution of \eqref{c-3} with $\kappa_0=0$.

\item [(2)]If $0<\beta_{0}<\min\{a_0,b_0\}$, by using similar
 arguments as in \cite[Proposition 3.3]{Sirakov-2007-CMP}, we know
 that $\int_{\mathbb{R}^{N}}u_{n}^{4},
 \int_{\mathbb{R}^{N}}v_{n}^{4}\geq\delta_{0}>0$. Hence, $u_0\not\equiv0$ and $v_0\not\equiv0$ is a
 positive
 radial ground state solution of \eqref{c-1} with $\kappa_0=0$.

  \item [(3)] If $\min\{a_0,b_0\}\leq\beta_{0}\leq\max\{a_0,b_0\}$ and $a_0\neq
  b_0$, as in \cite[Theorem 1]{Sirakov-2007-CMP}, the system \eqref{c-1} does not have a
nontrivial solution(both component are not equal to zero) with
nonnegative components. So, one of $u_0$ and $v_0$ must be zero.

\item [(4)] If $0<a_0=b_0<1$ and $\beta_0=a_0$, we claim that both $u_0$
and $v_0$ are non zero. In fact, if $u_0=0$, we infer from
\cite[Proposition 3.2]{Sirakov-2007-CMP} that
\begin{equation}\label{c21}
\frac{S_1^2}{4a_0^2}=c_0^{0}\leq\frac{S_1^2}{4a_0}.
\end{equation}
This is a contradiction. So, $u_0\not\equiv0$ and $v_0\not\equiv0$ is
a positive radial ground state solution of \eqref{c-1} with
$\kappa_0=0$ in this case.
\end{itemize}

Next we consider the case $N=1$. As in \eqref{c18}, we obtain
$(u_{n},v_{n})$ is bounded in $E_{r}$. Without loss of generality we
assume that $(u_{n},v_{n})\rightharpoonup(u_{0},v_{0})$ in
$H_{r}^{1}(\mathbb{R})\times H_{r}^{1}(\mathbb{R})$, and
$(u_{n},v_{n})\rightarrow(u_{0},v_{0})$ in
$L_{r,loc}^{p}(\mathbb{R})\times L_{r,loc}^{p}(\mathbb{R})$$(\forall
p\in(2,\infty))$. Moreover, similar to \eqref{c19}, one deduces that
$(u_{0},v_{0})$ is an nonnegative solution of \eqref{c-1} with
$\kappa_0=0$. Since $C_0^{\kappa_0}$ is a decreasing function on
$\kappa_0$, it follows from \eqref{b-5} that
\begin{equation}\label{c22}
\frac{1}{4}\int_{\mathbb{R}}(a_{0}u_{0}^{4} +b_{0}v_{0}^{4}+2\beta_0
u_{0}^{2}v_{0}^{2})\geq c_0^{0}\geq c_0^{\kappa_{0}^n}\geq
c^{\kappa_{0}^1}\geq\delta>0.
\end{equation}
So, at least one of $u_{0}$ and $v_0$ is not equal to zero. The rest
of the proof is almost the same as the case $2\leq N\leq3$. We omit
the details here.
\end{proof}

\subsection{Uniqueness and nondegeneracy of the positive solution}

In this subsection we consider the uniqueness and nondegeneracy of the
positive solution of \eqref{c-1} when $p=4$, and $a_0$ and $b_0$ are
positive constants. Then the system is
\begin{equation}\label{auto-2}
\begin{cases}
-\Delta u+u=a_{0}u^{3}+\beta_{0}v^{2}u+\kappa_{0}v,\quad &\text{in}\ \mathbb{R}^{N}\\
-\Delta v+v=b_{0}v^{3}+\beta_{0}u^{2}v+\kappa_{0}u,\quad &\text{in}\
\mathbb{R}^{N}.
\end{cases}
\end{equation}
We look for a synchronized solution of the form
$z=(a_{1}w(a_3x),a_2w(a_3x))$, where $w$ is the unique positive
solution of
\begin{equation}\label{c-18}
-\Delta u+u=u^{3},\quad u\in H^1(\mathbb{R}^{N}).
\end{equation}
Substituting $z$ into \eqref{auto-2} gives
\begin{equation}\label{c-19}
\begin{cases}
a_1a_3^2=a_1-\kappa_{0}a_2,\\
a_3^2=a_{0}a_1^2+\beta_{0}a_2^2,\\
a_2a_3^2=a_2-\kappa_{0}a_1,\\
a_3^2=b_{0}a_2^2+\beta_0a_1^2.
\end{cases}
\end{equation}
Solving \eqref{c-19} gives
\begin{equation}\label{c-20}
\begin{cases}
a_{0}=b_{0}:=\mu>0,\\
a_1=a_2=\pm\sqrt{\frac{1-\kappa_0}{\mu+\beta_0}}, \beta_0>-\mu, 0<\kappa_0<1,\\
a_3=\sqrt{1-\kappa_0},
\end{cases}
\end{equation}
or
\begin{equation}\label{c-21}
\begin{cases}
a_{0}=b_{0}:=\mu>0,\\
a_1=-a_2=\pm\sqrt{\frac{1-\kappa_0}{\mu+\beta_0}}, \beta_0>-\mu, 0<\kappa_0<1,\\
a_3=\sqrt{1-\kappa_0}.
\end{cases}
\end{equation}
Without loss of generality we may assume that $a_{0}=b_{0}=\mu=1$.
Then the system \eqref{auto-2} has the four synchronized solutions
$z_{1,2}=\left(\pm\sqrt{\frac{1-\kappa_0}{1+\beta_0}}w(\sqrt{1-\kappa_0}x),
\pm\sqrt{\frac{1-\kappa_0}{1+\beta_0}}w(\sqrt{1-\kappa_0}x)\right)$
and
$z_{3,4}=\left(\pm\sqrt{\frac{1-\kappa_0}{1+\beta_0}}w(\sqrt{1-\kappa_0}x),
\mp\sqrt{\frac{1-\kappa_0}{1+\beta_0}}w(\sqrt{1-\kappa_0}x)\right)$.
We first consider the uniqueness of the positive solution $z_1$.
\begin{lemma}\label{lem-3.2}
\begin{enumerate}
  \item [(1)]If $N=1$ and $\mu=1$, then $z_1$ is the unique positive solution of \eqref{auto-2} in the following cases: (i) $1\leq\beta_0$ and $0<\kappa_0<1$, (ii) $-1<\beta_0<1$ and $\kappa_0>0$ is sufficiently small, (iii) $0<\kappa_0<1$ and $\beta_0-1<0$ is sufficiently small.
 \item [(2)] If $N=2$ or $3$, then $z_1$ is the unique positive solution of \eqref{auto-2} in the following cases: (i) $\beta_0\geq1$ and $0<\kappa_0<1$, (ii) $\beta_0, \kappa_0>0$ are sufficiently small, (iii) $\beta_0>0$ is sufficiently small and $\kappa_0$ is close to $1$.
\item [(3)] If $\beta_0<0$ is sufficiently small, and $\kappa_0=0$ or $\kappa_0$ is close to $1$, then $z_1$ is the unique radial positive solution of \eqref{auto-2}.
\end{enumerate}
\end{lemma}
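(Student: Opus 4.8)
The plan is to reduce the system \eqref{auto-2} (with $\mu=1$) to a scalar problem and then invoke known uniqueness and nondegeneracy results for ground states of $-\Delta u + u = u^3$. The first step is a \emph{comparison/elimination argument}: given a positive solution $(u,v)$ of \eqref{auto-2} with $a_0=b_0=1$, set $w_\pm = u \pm v$. Adding and subtracting the two equations gives
\[
-\Delta w_+ + (1-\kappa_0) w_+ = u^3 + v^3 + \beta_0(uv^2 + u^2v), \qquad
-\Delta w_- + (1+\kappa_0) w_- = u^3 - v^3 + \beta_0(uv^2 - u^2 v).
\]
The right-hand side of the $w_-$ equation factors as $(u-v)\big(u^2+uv+v^2 - \beta_0 uv\big) = w_-\big(u^2 + (1-\beta_0)uv + v^2\big)$, so $w_-$ satisfies a linear equation $-\Delta w_- + c(x) w_- = 0$ with $c(x) = 1+\kappa_0 - \big(u^2 + (1-\beta_0)uv + v^2\big)$. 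When $\beta_0 \le 1$ the coefficient multiplying $w_-$ is sign-definite in a way that, combined with the decay $u,v \to 0$ and a maximum-principle / integration-by-parts argument (testing with $w_-$ itself), forces $w_- \equiv 0$, i.e. $u \equiv v$. For $\beta_0 \ge 1$ one argues on $w_+$ instead, or uses the fact that $u=v$ on a synchronized solution is forced by uniqueness for the resulting scalar equation. This is the step where the case distinctions in the statement enter: for $\beta_0 \ge 1$ (cases (1)(i), (2)(i)) the sign works directly; for $\beta_0$ near $1$ from below, or $\kappa_0$ small, or $\kappa_0$ near $1$, one needs a perturbative estimate showing the quadratic form $u^2 + (1-\beta_0)uv + v^2$ stays below $1+\kappa_0$ on the relevant solution, using a priori bounds on $\|u\|_\infty, \|v\|_\infty$ coming from \eqref{c18}-type energy control and elliptic regularity.

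Once $u \equiv v$ is established, both equations collapse to the single scalar equation $-\Delta u + (1-\kappa_0) u = (1+\beta_0) u^3$, whose unique positive solution (for $N=1,2,3$) is the rescaled Aubin–Talenti-type ground state, namely $u = \sqrt{\tfrac{1-\kappa_0}{1+\beta_0}}\, w(\sqrt{1-\kappa_0}\, x)$ by the classical Kwong uniqueness theorem for $-\Delta u + u = u^3$. This recovers exactly $z_1$, proving uniqueness among positive solutions. For part (3), where $\beta_0 < 0$, the quadratic form argument on $w_-$ no longer has a favorable sign, so I would instead restrict to \emph{radial} positive solutions: in the radial class one can use ODE shooting / the structure of the phase plane, or a Busca–Sirakov-type argument, to show that any radial positive solution must be synchronized ($u \equiv v$ by symmetry of the system under swapping components together with radial uniqueness), again reducing to the scalar equation; the smallness of $|\beta_0|$ together with $\kappa_0 = 0$ or $\kappa_0$ near $1$ is used to control the cross term $\beta_0 u^2 v$ as a small perturbation of the decoupled system, for which synchronization is immediate.

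The main obstacle I anticipate is the \emph{intermediate-parameter regime} $-1 < \beta_0 < 1$ with $\kappa_0$ neither small nor close to $1$ — which is, tellingly, \emph{not} claimed in the lemma. Within the regimes that are claimed, the hard part is making the perturbative estimates in cases (1)(ii)--(iii), (2)(ii)--(iii), and (3) quantitative: one must show that the "bad" quadratic coefficient is strictly dominated so that the maximum principle applies, and this requires uniform $L^\infty$ bounds on $u$ and $v$ that are stable as $\kappa_0 \to 0$ or $\kappa_0 \to 1$. I would obtain these by combining the energy bound $\Phi_0^{\kappa_0}(u_{\kappa_0},v_{\kappa_0}) \le c_0^0$ from the proof of Proposition \ref{Pro-3.2}, the resulting $H^1$ bound, Sobolev embedding, and a standard bootstrap (Brezis–Kato) to upgrade to $L^\infty$, with the constants tracked explicitly in terms of $\kappa_0$ and $\beta_0$. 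The nondegeneracy assertion (needed later, though stated separately as Lemma \ref{lem-3.5}) would then follow from the nondegeneracy of $w$ as a solution of the scalar equation, linearizing \eqref{auto-2} at $z_1$ and diagonalizing the linearized operator into the $w_+$ and $w_-$ channels as above.
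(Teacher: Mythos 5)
Your central mechanism does not work in the regimes the lemma actually claims. Writing $w_-=u-v$ and subtracting the equations indeed gives $-\Delta w_-+\bigl[1+\kappa_0-(u^2+(1-\beta_0)uv+v^2)\bigr]w_-=0$, but testing with $w_-$ (or invoking the maximum principle) only forces $w_-\equiv 0$ if the zero-order coefficient is nonnegative, i.e.\ $u^2+(1-\beta_0)uv+v^2\le 1+\kappa_0$ pointwise. This fails exactly where it matters: at the synchronized solution itself $u=v=\sqrt{\tfrac{1-\kappa_0}{1+\beta_0}}\,w(\sqrt{1-\kappa_0}\,x)$, so at the origin $u^2+(1-\beta_0)uv+v^2=(3-\beta_0)\tfrac{1-\kappa_0}{1+\beta_0}\,w(0)^2$, which for $N=1$, $\beta_0=1$, $w(0)=\sqrt 2$ equals $2(1-\kappa_0)>1+\kappa_0$ whenever $\kappa_0<1/3$. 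The operator is therefore not positive, and showing its kernel is trivial is a nondegeneracy-type spectral statement, not a sign argument. The paper's proof uses a different pairing: multiply the first equation by $v$, the second by $u$, subtract and integrate, which yields $\int[(1-\beta_0)uv-\kappa_0](u+v)(u-v)=0$ over $(0,\infty)$ in 1D, or over $\Gamma_+=\{u>v\}$ with a boundary term for $N=2,3$. The factor $(1-\beta_0)uv-\kappa_0$ is strictly negative for all $\beta_0\ge 1$ and $\kappa_0>0$ \emph{independently of the size of $u,v$} --- which is precisely why $\beta_0\ge1$ is the unconditional case --- and the remaining work, which your sketch omits, is to show that $(u+v)(u-v)$ has a definite sign (no sign change of $u-v$ in 1D via a last-zero argument; emptiness of $\Gamma_\pm$ via the boundary-term inequality for $N=2,3$).

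There is a second gap in the perturbative cases (1)(ii)--(iii), (2)(ii)--(iii) and (3): you propose a priori $L^\infty$ bounds derived from the energy bound $\Phi_0^{\kappa_0}\le c_0^{0}$ plus a Brezis--Kato bootstrap, but that only controls \emph{ground states}, whereas the lemma asserts uniqueness among \emph{all} positive solutions. The paper obtains uniform $L^\infty$ bounds for the full positive solution set by a blow-up argument in the style of Dancer--Wei, reducing to a Liouville theorem for $-\Delta w_0=w_0^3+\tilde\beta h_0^2w_0$, and then combines compactness of the solution set with the implicit function theorem around the nondegenerate solution at $\kappa_0=0$ (Wei--Yao) or at $\beta_0=0$. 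Without an a priori bound valid for arbitrary positive solutions, your perturbation step cannot exclude solutions lying far from $z_1$; and your suggested ODE shooting for part (3) is not substantiated and is not what is needed to handle $N=2,3$.
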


\begin{proof}
\begin{enumerate}
  \item [(1)] We modify the argument of \cite[Theorem 1.1]{WeiYao2012-CPAA}. If $N=1$ and $\mu=1$,
system \eqref{auto} reduces to
\begin{equation}\label{c-22}
\begin{cases}
&-u''+u=u^{3}+\beta_{0}v^{2}u+\kappa_{0}v,\quad \text{in}\ [0,\infty),\\
&-v''+v=v^{3}+\beta_{0}u^{2}v+\kappa_{0}u,\quad \text{in}\
[0,\infty),\\
&u(r)>0, v(r)>0\ \text{in}\ [0,\infty),\\
&u'(0)=v'(0)=0,\ u(r),\ v(r)\rightarrow0\ \text{as}\
r\rightarrow\infty.
\end{cases}
\end{equation}
Let $(u,v)$ be a positive solution of \eqref{c-22}. Thus, we only
need to prove that $v(r)=u(r)$ for all $r\geq0$ by the uniqueness
result of the single scalar equation. Multiplying the first and
second equations of \eqref{c-22} by $v$ and $u$ respectively, then
we have that
\begin{equation}\label{c-23}
(u'v)'-u'v'-uv+u^{3}v+\beta_{0}v^{3}u+\kappa_{0}v^2=0,
\end{equation}
and
\begin{equation}\label{c-24}
(uv')'-u'v'-uv+v^{3}u+\beta_{0}u^{3}v+\kappa_{0}u^2=0.
\end{equation}
Subtracting \eqref{c-23} by \eqref{c-24} gives
\begin{equation}\label{c-25}
(u'v-uv')'+(1-\beta_0)uv(u^2-v^2)+\kappa_{0}(v^2-u^2)=0.
\end{equation}
Integrating \eqref{c-25} over $(0,\infty)$ and using
$u'(0)=v'(0)=u(\infty)=v(\infty)$, we have
\begin{equation}\label{c-26}
\int_{0}^{\infty}[(1-\beta_0)uv-\kappa_{0}](u+v)(u-v)=0.
\end{equation}
We first claim that for $\beta_0>-1$, $u\geq v$ or $u\leq v$.
Suppose not, then $g=u-v$ changes sign. It is easy to see that
\begin{equation}\label{c-27}
g''-(1+\kappa_{0})g-(u^2+uv+v^2-\beta uv)g=0\quad \text{in}\
[0,\infty).
\end{equation}
By using unique continuation property for elliptic equation, we know
that $g$ is not equal to zero in any nonempty interval.
Furthermore, by Maximum principle we infer that $g(r)=u(r)-v(r)$
changes sign only finite time. Without loss of generality we may
assume that $g(r)>0$ for large $r$. Thus there exists $R_1>0$ such
that for $r>R_1$ and $\beta_0\neq1$
\begin{equation}\label{c-28}
u(R_1)v(R_1)<\frac{\kappa_{0}}{|1-\beta_0|},\quad
u(R_1)-v(R_1)=0\quad\text{and}\quad u(r)-v(r)>0.
\end{equation}
This implies that
\begin{equation}\label{c-29}
u'(R_1)-v'(R_1)\geq0.
\end{equation}
Integrating \eqref{c-25} over ($R_1,\infty$) we obtain
\begin{equation}\label{c-30}
-(u'v-uv')(R_1)+\int_{R_1}^{\infty}[(1-\beta_0)uv-\kappa_{0}](u+v)(u-v)=0.
\end{equation}
We infer from \eqref{c-28}-\eqref{c-30} that
\begin{equation}\label{c-31}
\begin{split}
&-(u'v-uv')(R_1)=-u(R_1)[u'(R_1)-v'(R_1)]\leq0 \quad \text{and}\\
&(1-\beta_0)u(r)v(r)-\kappa_{0}\leq|1-\beta_0|u(R_1)v(R_1)-\kappa_{0}<0,\
\forall r\geq R_1.
\end{split}
\end{equation}
This contradicts with \eqref{c-30}. If $\beta_0=1$, we can also find
the contradiction by using the argument of
\eqref{c-30}-\eqref{c-31}. So, we prove the claim that for
$\beta_0>-1$, $u\geq v$ or $u\leq v$. Finally, we need prove that
$u\equiv v$. We divide into the following three cases:

\begin{enumerate}
  \item [($a$)]If $\beta_0\geq1$, we know that
\begin{equation}\label{c-32}
(1-\beta_0)u(r)v(r)-\kappa_{0}<0,\ \forall r>0.
\end{equation}
Moreover, we infer from $u\geq v$ or $u\leq v$ that the left hand of
the integral is strict less than zero. This is contradiction. Thus,
$u\equiv v$ in this case.
  \item [($b$)]For each $-1<\beta_0<1$, without loss of generality we assume
that $u\geq v$. It is clear that there exists $R_2>0$ large enough
such that
\begin{equation}\label{c-33}
\int_{0}^{R_2}(u(r)+v(r))(u(r)-v(r))dr>\int_{R_2}^{\infty}(u(r)+v(r))(u(r)-v(r))dr.
\end{equation}
Furthermore, we can choose $\kappa_0$ small enough such that
$[(1-\beta_0)u(R_2)v(R_2)-\kappa_0]>0$. So, we obtain that
\begin{equation}\label{c-34}
\begin{split}
& \int_{0}^{\infty}[(1-\beta_0)uv-\kappa_{0}](u+v)(u-v)\\
\geq& [(1-\beta_0)u(R_2)v(R_2)-\kappa_{0}]\left(\int_{0}^{R_2}(u^2-v^2)dr-\int_{R_2}^{\infty}(u^2-v^2)dr\right)\\
>& 0.
\end{split}
\end{equation}
This contradicts with \eqref{c-26}.
  \item [($c$)] For each $0<\kappa_0<1$, we should prove that the conclusion
holds if $\beta-1$ close to $0^+$. Since $u(0)=\max u(x)$ and
$v(0)=\max v(x)$, if $u(0)v(0)<\frac{\kappa_0}{1-\beta_0}$, we have
that
\begin{equation}\label{c-35}
\int_{0}^{\infty}[(1-\beta_0)uv-\kappa_{0}](u+v)(u-v)<0.
\end{equation}
This contradicts with \eqref{c-26}.
\end{enumerate}

\item [(2)] We first use the idea of \cite{WeiYao2012-CPAA} to consider the case ($i$).
Let $\Gamma_{+}=\{x\in\mathbb{R}^{N}: u(x)-v(x)>0\}$. Then
$\Gamma_{+}$ is a piecewise $C^1$ smooth domain. Multiplying the
first equation in \eqref{auto} by $v$ and the second equation in
\eqref{auto} by $u$ and then integrating by parts on $\Gamma_{+}$
and subtracting together, we obtain the following integral identity
\begin{equation}\label{a-21}
\int_{\partial\Gamma_{+}}(v\frac{\partial u}{\partial
n}-u\frac{\partial v}{\partial
n})+\int_{\Gamma_{+}}[(1-\beta_0)uv-\kappa_{0}](u+v)(u-v)=0,
\end{equation}
where $n$ denotes the unit outward normal to $\Gamma_{+}$. Since
$u(x)-v(x)>0$ in $\Gamma_{+}$, $u(x)-v(x)=0$ on $\partial\Gamma_{+}$
and
$\lim_{|x|\rightarrow\infty}u(x)=\lim_{|x|\rightarrow\infty}v(x)=0$,
it follows that
\begin{equation}\label{a-22}
\int_{\partial\Gamma_{+}}(v\frac{\partial u}{\partial
n}-u\frac{\partial v}{\partial
n})=\int_{\partial\Gamma_{+}}u\frac{\partial (u-v)}{\partial
n}\leq0.
\end{equation}
On the other hand, one sees that for $\beta\geq1$ and $0<\kappa_0<1$
\begin{equation}\label{a-23}
\int_{\Gamma_{+}}[(1-\beta_0)uv-\kappa_{0}](u+v)(u-v)<0.
\end{equation}
Hence, $\Gamma_{+}=\emptyset$. Similarly, we may prove that the set
$\Gamma_{-}=\{x\in\mathbb{R}^{N}: u(x)-v(x)>0\}$ is also an empty
set. Therefore, $u=v$ and we complete the proof of the case $(i)$.

Second, we consider the case $(ii)$. According to \cite[Theorem
4.1]{WeiYao2012-CPAA}, if $\beta_0>0$ small and $\kappa_0=0$, we
know that
$z_1=(\sqrt{\frac{1-\kappa_0}{1+\beta_0}}w(\sqrt{1-\kappa_0}x),
\sqrt{\frac{1-\kappa_0}{1+\beta_0}}w(\sqrt{1-\kappa_0}x))$ is a
unique positive solution of \eqref{auto}. Moreover, $z_1$ is
nondegenerate in $E_r=H_{r}^{1}(\mathbb{R}^{N})\times
H_{r}^{1}(\mathbb{R}^{N})$ by \cite[Lemma 2.2]{WeiYao2012-CPAA}. For
each $z=(u,v)\in E_r$ we define
\begin{equation}\label{a-19}
\Phi_{\kappa_0}(u,v)=\frac{1}{2}(\|u\|^{2}+\|v\|^{2})-\frac{1}{4}\int_{\mathbb{R}^{N}}(u^4+v^4+2\beta_0
u^2v^2)-\kappa_0\int_{\mathbb{R}^{N}}uv.
\end{equation}
Let $\Psi(\kappa_0,u,v)=\Phi'_{\kappa_0}(u,v)$. Obviously, we have
that $\Psi(0,z_1)=0$. Moreover,
$\Psi_{z}(0,z_1)=\Phi''_{\kappa_0}(z_1)$ is invertible. By the
implicit function theorem, there exist $\widetilde{\beta}_0> 0, R_0
>0$ and $\psi:(-\widetilde{\beta}_0,\widetilde{\beta}_0)\rightarrow B_{R_0}(z_1)$
such that for any
$\beta_0\in(-\widetilde{\beta}_0,\widetilde{\beta}_0)$,
$\Psi(\kappa_0,z)=0$ has a unique solution $z=\psi(\beta_0)$ in
$B_{R_0}(z_1)$. Furthermore, by using the same blow up arguments as
\cite[Lemma 2.4]{Dancer-Wei-2009-TRMS}, we know that for each fixed
$0<\kappa_0<1$, there exists $C_{\kappa_0}>0$ such that
\begin{equation}\label{a-20}
|u|_{L^{\infty}(\mathbb{R}^{N})}+|v|_{L^{\infty}(\mathbb{R}^{N})}\leq
C_{\kappa_0},
\end{equation}
where $(u,v)$ is a nonnegative solution of \eqref{auto-2}. Thus for
$\kappa_0$ sufficiently small, the set of solutions to system
\eqref{auto-2} is contained in $B_{R_0}(z_1)$.

Finally, we prove the case ($iii$). For $\bar{\beta}_0>0$, we define
\begin{equation*}
\mathcal {S}_{\bar{\beta}_0}=\{z=(u,v)\in E_{r}: z\ \text{is\ a\
positive\ solution\ of\ \eqref{auto-2}\ with}\
\beta_0\in[0,\bar{\beta}_0]\},
\end{equation*}
where $E_{r}=H_{r}^1(\mathbb{R}^{N})\times H_{r}^1(\mathbb{R}^{N})$.
By using a minor modification of the arguments of \cite[Corollary
2.4]{Ikoma-2009-Nodea}, we know that $\mathcal {S}_{\bar{\beta}_0}$
is compact in $E_r$. Moreover, according to \cite[Lemma
3.13]{ACR-1}, we know that the unique positive solution
$\tilde{z}_0=(\sqrt{1-\kappa_0}w(\sqrt{1-\kappa_0}x),
\sqrt{1-\kappa_0}w(\sqrt{1-\kappa_0}x))$ with $\beta_0=0$ of
\eqref{auto-2} is nondegenerate. So, by using the same arguments as
in the proof of the case ($i$), we can prove that for $\beta_0>0$
small, \eqref{auto-2} has a unique positive solution.

\item [(3)] Let $\mathcal {S}_{-\bar{\beta}_0}=\left\{z=(u,v)\in E_{r}: z\ \text{is\ a\
positive\ solution\ of\ \eqref{auto-2}\ with}\
\beta_0\in[\bar{\beta}_0,0]\right\}$. We first claim that for any
$\bar{\beta}_0>0$, there exists $C_{\bar{\beta}_0}>0$ such that
\begin{equation*}
|u|_{\infty}+|v|_{\infty}\leq C_{\bar{\beta}_0}.
\end{equation*}
Similarly, we also use the blow up arguments as \cite[Lemma
2.4]{Dancer-Wei-2009-TRMS}. Assume that there exist a sequence of
positive solutions $\{z_{n}=(u_{n},v_{n})\}$ of \eqref{auto-2} with
$\beta_{n}\in[-\bar{\beta}_0,0]$ such that
$\beta_{n}\rightarrow\tilde{\beta}$ and
$|v_{n}|_{\infty}\leq|u_{n}|_{\infty}\rightarrow\infty$ as
$n\to\infty$. We set
\begin{equation*}
\eta_{n}=\frac{1}{|u_{n}|_{\infty}},\quad
(w_{n}(x),h_{n}(x))=(\eta_{n}u_{n}(\sqrt{\eta_{n}}x),\eta_{n}v_{n}(\sqrt{\eta_{n}}x)).
\end{equation*}
Since $u_{n}$ and $v_{n}$ are radially symmetric and decreasing in
the radial direction. Hence $|h_{n}|_{\infty}\le
|w_{n}|_{\infty}=w_n(0)=1$. It is easy to verify that
$(w_{n},h_{n})$ satisfies
\begin{equation*}
\begin{cases}
-\Delta
w_{n}+\eta_{n}w_{n}=w_{n}^3+\beta_{n}h_{n}^2w_{n}+\eta_{n}\kappa_0h_{n},\\
-\Delta
h_{n}+\eta_{n}h_{n}=h_{n}^3+\beta_{n}w_{n}^2h_{n}+\eta_{n}\kappa_0w_{n}.
\end{cases}
\end{equation*}
By the standard elliptic argument, we may assume that, subject to a
subsequence, $(w_{n},h_{n})\rightarrow(w_{0},h_{0})$ in
$C_{loc}^{2}(\mathbb{R}^{3})$ as $n\to\infty$, where $(w_{0},h_{0})$
is a nonnegative solution of
\begin{equation}\label{c-j41}
\begin{cases}
-\Delta
w_{0}=w_{0}^3+\tilde{\beta}h_0^{2}w_{0},\\
-\Delta h_{0}=h_{0}^3+\tilde{\beta}w_0^{2}h_{0}.
\end{cases}
\end{equation}
Since $w_0(0)=1$ and the maximum principle shows that $w_0(x)>0$ in
$\mathbb{R}^N$. However, as in \cite[Theorem
2.1]{Wei-Juncheng-2010-Anals}, we know that for $\tilde{\beta}>-1$,
any nonnegative solution of \eqref{c-j41} is zero. This is a
contradiction. \qedhere
\end{enumerate}
\end{proof}

By \cite[Theorems 4.1 and 4.2]{WeiYao2012-CPAA}, we have the following result for $\kappa_0=0$.

\begin{lemma}\label{lem-3.3}
\begin{itemize}
  \item [(1)]If $N=1$ and $0\leq\beta_0\not\in[\min\{a_0,b_0\}, \max\{a_0,b_0\}]$, then $z_0=(\sqrt{\frac{\beta_0-b_0}{\beta_0^2-a_0b_0}}w(x)$, $\sqrt{\frac{\beta_0-a_0}{\beta_0^2-a_0b_0}}w(x))$ is the unique solution of \eqref{auto-2}.
  \item [(2)]If $N=2$ or $3$, and $\beta_0>\max\{a_0,b_0\}$ or $\beta_0>0$ is sufficiently small, then $z_0$ is the unique positive solution of \eqref{auto-2}.
\end{itemize}
\end{lemma}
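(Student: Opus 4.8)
The plan is to observe that when $\kappa_0 = 0$ the system \eqref{auto-2} decouples into the ``pure power'' coupled system
\begin{equation*}
\begin{cases}
-\Delta u + u = a_0 u^3 + \beta_0 v^2 u, \\
-\Delta v + v = b_0 v^3 + \beta_0 u^2 v,
\end{cases}
\end{equation*}
and this is precisely the system whose positive solutions were classified in \cite[Theorems 4.1 and 4.2]{WeiYao2012-CPAA}. So the whole argument is a matter of invoking those theorems and matching the coefficient conventions. First I would record that any synchronized positive solution must be of the form $z_0 = (c_1 w, c_2 w)$ with $w$ the unique positive solution of \eqref{c-18}; substituting this ansatz into the decoupled system gives the two algebraic equations $1 = a_0 c_1^2 + \beta_0 c_2^2$ and $1 = b_0 c_2^2 + \beta_0 c_1^2$, whose solution is $c_1^2 = (\beta_0 - b_0)/(\beta_0^2 - a_0 b_0)$, $c_2^2 = (\beta_0 - a_0)/(\beta_0^2 - a_0 b_0)$, yielding exactly the $z_0$ in the statement (and one checks $c_1^2, c_2^2 > 0$ precisely when $\beta_0 \notin [\min\{a_0,b_0\}, \max\{a_0,b_0\}]$, which is why that hypothesis appears in part~(1)).

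For part (1), $N=1$: here I would cite the one-dimensional uniqueness result of \cite{WeiYao2012-CPAA} (their Theorem~4.1 or 4.2, whichever covers the $\beta_0 \notin [\min,\max]$ regime), which states that in dimension one the only positive solution of the decoupled system in that coefficient range is the synchronized one $z_0$ above. The ODE nature of the problem (combined with the uniqueness/nondegeneracy of $w$ via the classical Kwong result) is what makes the full uniqueness statement — not merely among radial solutions — available in $N=1$. For part (2), $N=2,3$: I would invoke the corresponding higher-dimensional theorem of \cite{WeiYao2012-CPAA}, which gives uniqueness of the positive solution either in the ``strong coupling'' regime $\beta_0 > \max\{a_0,b_0\}$ (where a variational/ground-state argument forces synchronization, as in the Sirakov-type results already used in the proof of Proposition~\ref{Pro-3.2}) or in the ``weak coupling'' regime $0 < \beta_0$ small (where an implicit-function-theorem perturbation off the $\beta_0 = 0$ case, at which the system fully decouples into two copies of the scalar equation with unique nondegenerate solution $w$, plus an a priori $L^\infty$ bound on the solution set, does the job — exactly the scheme already carried out in case~(2)(ii)--(iii) of Lemma~\ref{lem-3.2}).

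The only genuine subtlety, and the step I would flag, is the bookkeeping needed to pass from the normalization $a_0 = b_0 = \mu = 1$ used for the rest of Section~3 back to general positive constants $a_0, b_0$ in this lemma: one must scale $u \mapsto a_0^{1/2} u$, $v \mapsto b_0^{1/2} v$ (and correspondingly rescale $\beta_0$) to land on the form treated in \cite{WeiYao2012-CPAA}, and then translate their normalized uniqueness conclusion back, which is how the slightly asymmetric coefficients $\sqrt{(\beta_0 - b_0)/(\beta_0^2 - a_0 b_0)}$ and $\sqrt{(\beta_0 - a_0)/(\beta_0^2 - a_0 b_0)}$ emerge. Beyond that, there is no new analytic content: the lemma is a direct corollary of the cited classification theorems once the $\kappa_0 = 0$ decoupling is noted.
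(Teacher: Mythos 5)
Your proposal is correct and follows exactly the paper's route: the paper gives no proof at all for this lemma, simply stating it as a direct consequence of \cite[Theorems 4.1 and 4.2]{WeiYao2012-CPAA} once $\kappa_0=0$ removes the linear coupling, and your verification of the synchronized ansatz and the sign condition $\beta_0\notin[\min\{a_0,b_0\},\max\{a_0,b_0\}]$ is the right (implicit) supporting computation. The only superfluous step is your rescaling remark: Wei--Yao's theorems already treat general coefficients $\mu_1,\mu_2,\beta$, so no normalization back from $a_0=b_0=1$ is needed.
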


Next we study the nondegeneracy of solutions of the system \eqref{auto-2}. Recall that $(U_1,U_2)$ is a nondegenerate solution if the solution set of the linearized system
\begin{equation}\label{a-24}
\begin{cases}
&\Delta \phi_{1}-\phi_{1}+\kappa_0\phi_2+3U_{1}^{2}\phi_1+\beta_{0}U_{2}^{2}\phi_1+2\beta_{0}U_{1}U_{2}\phi_2=0,\\
&\Delta
\phi_{2}-\phi_{2}+\kappa_0\phi_1+3U_{2}^{2}\phi_2+\beta_{0}U_{1}^{2}\phi_2+2\beta_{0}U_{1}U_{2}\phi_1=0,\\
&\phi_{1}=\phi_{1}(r),\ \phi_{2}=\phi_{2}(r)
\end{cases}
\end{equation}
is $N$-dimensional, i.e.,
\begin{equation}\label{a-25}
\phi=\left(\begin{array}{cc}
\phi_{1}\\
\phi_{2} \\
    \end{array}
  \right)
  =\sum_{j=1}^{N}k_j\left(\begin{array}{cc}
\frac{\partial U_1}{\partial x_j}\\
\frac{\partial U_2}{\partial x_j} \\
    \end{array}
  \right).
\end{equation}
Set $z_1=
(c_{0}w_{0},c_{0}w_{0}):=(\sqrt{\frac{1-\kappa_0}{1+\beta_0}}w(\sqrt{1-\kappa_0}x),$
$\sqrt{\frac{1-\kappa_0}{1+\beta_0}}w(\sqrt{1-\kappa_0}x))$. In the
following we study the nondegeneracy of the solution $z_1$. First we
have the following result for $\kappa_0=0$ as in \cite[lemma 2.2]{Dancer-Wei-2009-TRMS}.

\begin{lemma}\label{lem-3.4}
If $0\leq\beta_0\not\in[\min\{a_0,b_0\},\max\{a_0,b_0\}]$, then
$\hat{z}_1=(\sqrt{\frac{\beta_0-b_0}{\beta_0^2-a_0b_0}}w(x),$
$\sqrt{\frac{\beta_0-a_0}{\beta_0^2-a_0b_0}}w(x))$ is nondegenerate
in the space of radial functions.
\end{lemma}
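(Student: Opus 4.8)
The plan is to reduce the question of nondegeneracy of $\hat z_1=(\alpha w,\gamma w)$, where $\alpha=\sqrt{(\beta_0-b_0)/(\beta_0^2-a_0b_0)}$ and $\gamma=\sqrt{(\beta_0-a_0)/(\beta_0^2-a_0b_0)}$, to the known nondegeneracy of the scalar ground state $w$ of \eqref{c-18} in the space of radial $H^1$ functions. First I would write down the linearized system \eqref{a-24} with $\kappa_0=0$ and $U_1=\alpha w,\, U_2=\gamma w$: it reads
\begin{equation*}
\begin{cases}
\Delta\phi_1-\phi_1+3a_0\alpha^2 w^2\phi_1+\beta_0\gamma^2 w^2\phi_1+2\beta_0\alpha\gamma w^2\phi_2=0,\\
\Delta\phi_2-\phi_2+3b_0\gamma^2 w^2\phi_2+\beta_0\alpha^2 w^2\phi_2+2\beta_0\alpha\gamma w^2\phi_1=0.
\end{cases}
\end{equation*}
Using the algebraic identities that define $\hat z_1$ (namely $a_0\alpha^2+\beta_0\gamma^2=1$ and $b_0\gamma^2+\beta_0\alpha^2=1$), this becomes a system of the form $\Delta\phi_i-\phi_i+w^2(M\phi)_i=0$ with a constant $2\times2$ matrix
\[
M=\begin{pmatrix} 2a_0\alpha^2+1 & 2\beta_0\alpha\gamma\\ 2\beta_0\alpha\gamma & 2b_0\gamma^2+1\end{pmatrix},
\]
or more precisely $M=I+2\,\mathrm{diag}(a_0\alpha^2,b_0\gamma^2)+2\beta_0\alpha\gamma(E_{12}+E_{21})$ — I would double-check these entries against the identities above. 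The key point is that $M$ is symmetric, hence diagonalizable by an orthogonal matrix $O$; setting $\psi=O^\top\phi$ decouples the system into two scalar equations $\Delta\psi_i-\psi_i+\lambda_i w^2\psi_i=0$, where $\lambda_1,\lambda_2$ are the eigenvalues of $M$.

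The second step is to analyze the decoupled scalar equations. The linearized operator $L_\lambda:=-\Delta+1-\lambda w^2$ on radial functions is, for $\lambda=3$, exactly the scalar linearization $L_+=-\Delta+1-3w^2$ at $w$; it is classical (Kwong, and the references in the paper) that $\ker L_+\cap H^1_r=\{0\}$ in dimensions $N\le 3$ — the radial kernel of $L_+$ is trivial because the translational zero modes $\partial w/\partial x_j$ are not radial. For $\lambda=1$ one has $L_1 w=-\Delta w+w-w^3=0$ by \eqref{c-18}, but $w>0$ so $w$ is the ground state of $L_1$ and $0$ is the bottom of the spectrum with a one-dimensional (radial) eigenspace; so $\ker L_1\cap H^1_r$ is spanned by $w$, which is a genuine radial zero mode. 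The third step is therefore to check which eigenvalues $\lambda_1,\lambda_2$ actually occur for $\hat z_1$. A direct computation of the trace and determinant of $M$ in terms of $a_0,b_0,\beta_0$ (again using the defining identities for $\alpha,\gamma$) should show that one eigenvalue equals $3$ — with eigenvector proportional to $(\alpha,\gamma)$, corresponding to the fact that $(\alpha w,\gamma w)$ solves a scalar-type equation "in the $(\alpha,\gamma)$ direction" — and the other eigenvalue is some $\lambda_2\neq 1,3$ in the stated parameter range $0\le\beta_0\notin[\min\{a_0,b_0\},\max\{a_0,b_0\}]$, so that $\ker L_{\lambda_2}\cap H^1_r=\{0\}$ as well. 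Combining, the only radial solutions of \eqref{a-24} are multiples of the $\lambda=3$ mode, i.e. of $(\partial_{x_j}(\alpha w),\partial_{x_j}(\gamma w))$ — but in the radial class these vanish, so in fact the radial kernel is trivial, which is exactly nondegeneracy in the space of radial functions.

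The main obstacle I anticipate is the eigenvalue bookkeeping in the third step: one must verify, uniformly over the admissible range of $(a_0,b_0,\beta_0)$, that the second eigenvalue $\lambda_2$ of $M$ never hits the bad value $1$ (and never hits $3$ in a way that would create extra radial kernel beyond the translational modes, though since translational modes are non-radial even a double eigenvalue $3$ is harmless here). Equivalently one needs a strict inequality on $\det(M-I)$ or $\det(M-3I)$; this is where the hypothesis $\beta_0\notin[\min\{a_0,b_0\},\max\{a_0,b_0\}]$ — which makes $\beta_0^2-a_0b_0$ and $\beta_0-a_0,\beta_0-b_0$ all of controlled sign so that $\alpha^2,\gamma^2>0$ are well-defined — should be used precisely. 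I would quote the scalar nondegeneracy facts ($\ker L_+\cap H^1_r=0$ and $\ker L_1\cap H^1_r=\mathbb{R}w$ in $N\le 3$) from Kwong / the references already cited (as in \cite[Lemma 2.2]{Dancer-Wei-2009-TRMS}), and present the orthogonal-diagonalization reduction and the eigenvalue computation as the new content, deferring the routine algebra.
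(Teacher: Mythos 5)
Your outline is correct and is essentially the intended argument: the paper gives no proof of Lemma \ref{lem-3.4} at all (it only cites Dancer--Wei), and your orthogonal diagonalization of the linearization is exactly the argument of that reference and the same decoupling the paper itself performs for $\kappa_0\neq 0$ in Lemma \ref{lem-3.5}. Your matrix $M$ is right, $(\alpha,\gamma)$ is indeed an eigenvector with eigenvalue $3$, and the remaining eigenvalue is $\lambda_2=\operatorname{tr}M-3=2a_0\alpha^2+2b_0\gamma^2-1=1-\frac{2(\beta_0-a_0)(\beta_0-b_0)}{\beta_0^2-a_0b_0}$. The one point to tighten is your criterion ``$\lambda_2\neq 1,3$'': that alone is not sufficient in principle, because the weighted problem $-\Delta\Psi+\Psi=\lambda w^2\Psi$ has further \emph{radial} eigenvalues above $3$ (cf.\ \eqref{c-47}), so you must check $\lambda_2$ against all of them; this is closed by the explicit formula, which gives $\lambda_2<1$ when $\beta_0>\max\{a_0,b_0\}$ (where positivity of $-\Delta+1-\lambda_2 w^2$ follows since $\lambda=1$ is the bottom weighted eigenvalue) and $1<\lambda_2\le 3$ when $0\le\beta_0<\min\{a_0,b_0\}$ (where the radial kernel is trivial because $1$ is simple with radial eigenfunction $w$ and the next radial eigenvalue exceeds $3$). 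With that computation written out, the proof is complete.
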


For $\kappa_0\neq0$, we have the following result.

\begin{lemma}\label{lem-3.5}
Let $0<\kappa_0<1$. If $\beta_{0}\geq3$, or $-1<\beta_{0}<3$ and $w(0)\leq
\sqrt{\frac{2\kappa_0(1+\beta_0)}{(3-\beta_0)(1-\kappa_0)}}$, then
$z_1=
(c_{0}w_{0},c_{0}w_{0}):=(\sqrt{\frac{1-\kappa_0}{1+\beta_0}}w(\sqrt{1-\kappa_0}x),$
$\sqrt{\frac{1-\kappa_0}{1+\beta_0}}w(\sqrt{1-\kappa_0}x))$ is
nondegenerate in the space of radial functions, where $w(0)=\max w$
and $w$ is the unique positive solution of the scalar equation.
\end{lemma}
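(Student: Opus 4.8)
The plan is to exploit the fact that $z_1=(c_0w_0,c_0w_0)$ has equal components, so that the linearized system \eqref{a-24} diagonalizes upon passing to the sum and difference of its two components. Write $\psi:=c_0w_0$, so that $U_1=U_2=\psi$ and, from the definition of $z_1$, $\psi^2=\frac{1-\kappa_0}{1+\beta_0}\,w(\sqrt{1-\kappa_0}\,x)^2$. Setting $\phi_{\pm}:=\phi_1\pm\phi_2$ in \eqref{a-24} and adding (resp. subtracting) the two equations, the system decouples into
\begin{equation*}
\Delta\phi_{+}-(1-\kappa_0)\phi_{+}+3(1+\beta_0)\psi^2\phi_{+}=0,\qquad
\Delta\phi_{-}-(1+\kappa_0)\phi_{-}+(3-\beta_0)\psi^2\phi_{-}=0.
\end{equation*}
After the rescaling $y=\sqrt{1-\kappa_0}\,x$ and substitution of $\psi^2$, the first equation becomes precisely the linearization $\Delta\tilde\phi_{+}-\tilde\phi_{+}+3w^2\tilde\phi_{+}=0$ of the scalar equation \eqref{c-18} at its positive ground state $w$, while the second becomes $\Delta\tilde\phi_{-}-\mu\tilde\phi_{-}+\gamma w^2\tilde\phi_{-}=0$, where $\mu:=\frac{1+\kappa_0}{1-\kappa_0}$ and $\gamma:=\frac{3-\beta_0}{1+\beta_0}$; note that $\mu-1=\frac{2\kappa_0}{1-\kappa_0}>0$.

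For the $\phi_{+}$-equation I would invoke the classical (Kwong--Weinstein) nondegeneracy of $w$: the kernel of $-\Delta+1-3w^2$ in $H^1(\mathbb{R}^N)$ is spanned by $\partial_{x_1}w,\dots,\partial_{x_N}w$. Each of these is odd in one coordinate, so the only radial element of this span is $0$; hence in the class of radial functions $\tilde\phi_{+}\equiv0$, i.e. $\phi_1=-\phi_2$. (Dropping the radiality restriction, this $\phi_{+}$-part is exactly what produces the $N$-dimensional null space described in \eqref{a-25}.)

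For the $\phi_{-}$-equation the point is a coercivity estimate for $-\Delta+\mu-\gamma w^2$. If $\beta_0\ge3$ then $\gamma\le0$, so $-\Delta+\mu-\gamma w^2\ge-\Delta+\mu>0$ on $H^1(\mathbb{R}^N)$ and $\tilde\phi_{-}\equiv0$. If $-1<\beta_0<3$ then $\gamma>0$, and squaring the hypothesis $w(0)\le\sqrt{\frac{2\kappa_0(1+\beta_0)}{(3-\beta_0)(1-\kappa_0)}}$ and multiplying by $\gamma$ yields exactly $\gamma\,w(0)^2\le\mu-1$. Since $w$ is radially symmetric with $w(0)=\max w$, we obtain $\gamma w(x)^2\le\gamma w(0)^2\le\mu-1$ for all $x$, whence $-\Delta+\mu-\gamma w^2\ge-\Delta+1>0$; again $\tilde\phi_{-}\equiv0$, i.e. $\phi_1=\phi_2$. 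Combining the two cases gives $\phi_1=\phi_2=0$ in the radial class, which is the asserted nondegeneracy of $z_1$.

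The computations are elementary, and the only external ingredient is the standard scalar nondegeneracy result used for $\phi_{+}$. The step to be careful with is the bookkeeping in the change of variables together with the verification that the hypothesis on $w(0)$ is equivalent to the coercivity inequality $\gamma\,w(0)^2\le\mu-1$; this is precisely the origin of the constant $\sqrt{\frac{2\kappa_0(1+\beta_0)}{(3-\beta_0)(1-\kappa_0)}}$, since $\mu-1=\frac{2\kappa_0}{1-\kappa_0}$.
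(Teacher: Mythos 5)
Your proposal is correct and follows essentially the same route as the paper: diagonalize the linearized system via the sum and difference $\phi_1\pm\phi_2$, reduce the sum component to the Kwong--Weinstein linearization of the scalar equation (whose kernel contains no nonzero radial function), and kill the difference component by the coercivity of $-\Delta+\mu-\gamma w^2$, which for $-1<\beta_0<3$ is exactly where the hypothesis $\gamma\,w(0)^2\le\mu-1$ enters. Your bookkeeping of the rescaling and of the constant $\sqrt{\tfrac{2\kappa_0(1+\beta_0)}{(3-\beta_0)(1-\kappa_0)}}$ matches the paper's computation.
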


\begin{proof}
If $\kappa_0\neq0$, we shall prove the nondegenerate of $z_1=
(c_{0}w_{0},c_{0}w_{0}):=(\sqrt{\frac{1-\kappa_0}{1+\beta_0}}$
$w(\sqrt{1-\kappa_0}x),
\sqrt{\frac{1-\kappa_0}{1+\beta_0}}w(\sqrt{1-\kappa_0}x))$. The
linearized problem of \eqref{auto-2} at $z_1$ becomes
\begin{equation}\label{c-43}
\begin{cases}
&\Delta \phi_{1}-\phi_{1}+\kappa_0\phi_2+(3+\beta_{0})c_{0}^{2}w_{0}^{2}\phi_1+2\beta_{0} c_{0}^{2}w_{0}^{2}\phi_2=0,\\
&\Delta
\phi_{2}-\phi_{2}+\kappa_0\phi_1+(3+\beta_{0})c_{0}^{2}w_{0}^{2}\phi_2+2\beta_{0}
c_{0}^{2}w_{0}^{2}\phi_1=0,\\
&\phi_{1}=\phi_{1}(r),\ \phi_{2}=\phi_{2}(r).
\end{cases}
\end{equation}
By an orthonormal transformation, \eqref{c-43} can be transformed to
two single equations
\begin{equation}\label{c-44}
\begin{cases}
&\Delta\Phi_{1}-(1-\kappa_0)\Phi_{1}+3(1-\kappa_{0})w^{2}(\sqrt{1-\kappa_0}x)\Phi_1=0,\\
&\Delta\Phi_{2}-(1-\kappa_0)\Phi_{1}+\left[\frac{(3-\beta_0)(1-\kappa_0)}{1+\beta_0}w^{2}(\sqrt{1-\kappa_0}x)-2\kappa_0\right]\Phi_2=0.
\end{cases}
\end{equation}
By scaling $x\mapsto\frac{y}{\sqrt{1-\kappa_0}}$, we know that
\eqref{c-44} becomes
\begin{equation}\label{c-45}
\begin{cases}
&\Delta\Psi_{1}-\Psi_{1}+3w^{2}(y))\Psi_1=0,\\
&\Delta\Psi_{2}-\Psi_{2}+\left[\frac{(3-\beta_0)}{1+\beta_0}w^{2}(y)-\frac{2\kappa_0}{1-\kappa_0}\right]\Psi_2=0,
\end{cases}
\end{equation}
where $\Psi_{i}(y)=\Phi_i(\frac{y}{\sqrt{1-\kappa_0}})(i=1,2)$. On
the other hand, since the eigenvalues of
\begin{equation}\label{c-46}
\Delta\Psi-\Psi+\lambda w^{2}\Psi=0,\quad \Psi\in
H^{1}(\mathbb{R}^{N})
\end{equation}
are
\begin{equation}\label{c-47}
\lambda_1=1,\  \lambda_2=\cdot\cdot\cdot=\lambda_{N+1}=3,\
\lambda_{N+2}>3,
\end{equation}
where the eigenfunction corresponding to $\lambda_1$ is $cw$, and
the eigenfunctions corresponding to $\lambda_2$ are spanned by
$\frac{\partial w}{\partial x_{j}}(j=1,2,\cdot\cdot\cdot,N)$.

So, the first equation \eqref{c-45} has only zero solution, i.e.,
$\Psi_1=0$.

If $\beta_{0}\geq3$ and $0<\kappa_0<1$ we know that
$K(\beta_0,\kappa_0):=\frac{(3-\beta_0)}{1+\beta_0}w^{2}(y)-\frac{2\kappa_0}{1-\kappa_0}<0$.
It follows that $\Psi_2=0$.

If $-1<\beta_{0}<3$, $0<\kappa_0<1$ and
$w(0)\leq\sqrt{\frac{2\kappa_0(1+\beta_0)}{(3-\beta_0)(1-\kappa_0)}}$,
then $K(\beta_0,\kappa_0)\leq0$. Thus, $\Psi_2=0$.
\end{proof}

\begin{remark}\label{rem-3.7}
Similarly, under the same conditions of Lemma \ref{lem-3.5}, one can
prove that $z_2=
(-c_{0}w_{0},-c_{0}w_{0})=(-\sqrt{\frac{1-\kappa_0}{1+\beta_0}}w(\sqrt{1-\kappa_0}x),-\sqrt{\frac{1-\kappa_0}{1+\beta_0}}w(\sqrt{1-\kappa_0}x))$
is also nondegenerate in the space of radial functions.
\end{remark}

\section{Concentration compactness lemma}

\setcounter{section}{4} \setcounter{equation}{0}

The following profile decomposition is an immediate consequence of Theorem~3.1, that trivially adapts the reasoning for the scalar case of Corollary~3.2, from \cite{ccbook} to the Hilbert space $E=H^1(\R^N) \times H^1(\R^N)$,  equipped with the group $D$ of lattice translations $D=\{(u,v)\mapsto (u(\cdot-y),v(\cdot-y)),\;y\in\mathbb \Z^N\}$.

\begin{theorem} \label{profdec} Let $(u_k, v_k)$ be a bounded sequence in $E$. There exists a renamed subsequence and a sequence $(y_k^{(n)})_k\subset \mathbb \Z^N)$, $n\in\mathbb N$, such that $y_k^{(1)}=0$,
\begin{equation}\label{wlim}
U^{(n)}=\stackrel{\rightharpoonup}{\lim} u_k(\cdot +y_k^{(n)});\; V^{(n)}=\stackrel{\rightharpoonup}{\lim} v_k(\cdot +y_k^{(n)});
\end{equation}
\begin{equation}
|y_k^{(m)}-y_k^{(n)}|\to\infty\text{ for } m\neq n,
\end{equation}
\begin{equation}
\sum_n \|U^{(n)}\|^2\le \|u_k\|^2+o(1); \; \sum_n \|V^{(n)}\|^2\le \|v_k\|^2+o(1);
\end{equation}
and for any $p\in(2,2^*)$,
\begin{equation}\label{pd}
\rho_k:=u_k- \sum_n U^{(n)}(\cdot -y_k^{(n)})\to 0 \text{ in } L^p;\;
\tau_k:=v_k- \sum_n V^{(n)}(\cdot -y_k^{(n)})\to 0 \text{ in } L^p;\;
\end{equation}
and the series in the last relations are convergent unconditionally in $H^1$ and uniformly with respect to $k$.
\end{theorem}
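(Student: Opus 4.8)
The plan is to deduce Theorem~\ref{profdec} directly from the abstract profile decomposition theory of \cite{ccbook}, applied to the Hilbert space $E=H^1(\R^N)\times H^1(\R^N)$ equipped with the group $D$ of lattice translations. The key observation is that $D$ acts on $E$ componentwise by the isometries $(u,v)\mapsto(u(\cdot-y),v(\cdot-y))$, $y\in\Z^N$, and that this group is exactly the product action of the corresponding group on each scalar factor $H^1(\R^N)$. The cited abstract theorem (Theorem~3.1 in \cite{ccbook}) produces, for any bounded sequence $(u_k,v_k)$ in a Hilbert space with a suitable group of unitary operators, a renamed subsequence, a countable family of ``concentration profiles'' obtained as weak limits along group elements $g_k^{(n)}$, an asymptotic-orthogonality (``dislocation'') condition on the $g_k^{(n)}$, a Bessel-type inequality $\sum_n\|$profile$\|^2\le\liminf\|$sequence$\|^2$, and the fact that the remainder after subtracting all dislocated profiles converges to zero in the sense appropriate to the group — here, weakly in $E$ and (by the scalar Corollary~3.2) strongly in $L^p$ for $p\in(2,2^*)$.

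First I would fix notation: identify the group elements with lattice points, $g_k^{(n)}\leftrightarrow y_k^{(n)}\in\Z^N$, and normalise so that the first profile is the plain weak limit by taking $y_k^{(1)}=0$ (the abstract theorem always allows one to insert the identity element as one of the dislocations). Then \eqref{wlim} is just the definition of the profiles $U^{(n)},V^{(n)}$ as the componentwise weak limits of the translated sequences; the dislocation condition of the abstract theorem reads $g_k^{(m)}(g_k^{(n)})^{-1}\rightharpoonup 0$ weakly as operators, which for lattice translations is equivalent to $|y_k^{(m)}-y_k^{(n)}|\to\infty$ for $m\ne n$. The Bessel inequality $\sum_n\|(U^{(n)},V^{(n)})\|_E^2\le\|(u_k,v_k)\|_E^2+o(1)$ splits into the two stated inequalities because the $E$-norm is the orthogonal sum of the two $H^1$-norms and translations preserve each factor. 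The convergence of the remainder to zero in $L^p$ is where I would invoke the scalar Corollary~3.2 of \cite{ccbook}: applied separately to the bounded scalar sequences $u_k$ and $v_k$ with the same lattice-translation group, it gives exactly that $\rho_k=u_k-\sum_n U^{(n)}(\cdot-y_k^{(n)})\to 0$ and $\tau_k=v_k-\sum_n V^{(n)}(\cdot-y_k^{(n)})\to 0$ in $L^p(\R^N)$, together with unconditional convergence of the series in $H^1$, uniformly in $k$. One small point requiring care is that the profile \emph{positions} $y_k^{(n)}$ must be chosen \emph{simultaneously} for both components; this is automatic if one applies the vector-valued abstract Theorem~3.1 to $E$ directly (rather than applying the scalar result to each component and then trying to merge two possibly incompatible families of translations), and then reads off the $L^p$-smallness of each component from the scalar corollary afterwards.

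The main (and essentially only) obstacle is bookkeeping rather than analysis: one must check that the hypotheses of the abstract dislocation-compactness theorem of \cite{ccbook} are met by $(E,D)$ — namely that $D$ consists of unitary operators on $E$, that it has the required ``discreteness'' property (so that weak convergence of a sequence of group elements to a non-invertible operator forces $|y_k|\to\infty$), and that $E\hookrightarrow L^p(\R^N)\times L^p(\R^N)$ is a cocompact embedding with respect to $D$ for $p\in(2,2^*)$. All three are standard: unitarity is clear, the discreteness is the classical statement that lattice translations tending weakly to $0$ means $|y_k|\to\infty$, and cocompactness of $H^1(\R^N)\hookrightarrow L^p$ under $\Z^N$-translations for subcritical $p$ is precisely Lions' vanishing lemma (\cite[Lemma~1.21]{Willem1996book}), which lifts to the product space componentwise. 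Once these are in place, Theorem~\ref{profdec} is an immediate transcription of the abstract result, and the proof is complete; I would write it as a short paragraph pointing to \cite[Theorem~3.1 and Corollary~3.2]{ccbook} and verifying the cocompactness hypothesis, with the remark that everything decouples over the two factors because both the norm and the group action do.
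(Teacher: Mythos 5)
Your proposal is correct and follows exactly the route the paper takes: the paper gives no proof beyond a one-sentence citation of Theorem~3.1 and Corollary~3.2 of \cite{ccbook} applied to $E=H^1(\R^N)\times H^1(\R^N)$ with the lattice-translation group, which is precisely what you do (with the useful extra care of verifying unitarity, discreteness, and cocompactness, and of choosing the translations $y_k^{(n)}$ simultaneously for both components by working in the product space).
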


Note that it is  a priori possible that one of the components of $(U^{(n)},V^{(n)})$ is zero.

We will now evaluate the asymptotic value of the functional \eqref{b-1} on a sequence provided by the theorem above.

\begin{proposition} \label{prop:energy}
Let $\Phi$ be the functional \eqref{b-1} and let $\Phi_0$ be the functional \eqref{c-2}. Let $(u_k,v_k)$ be the sequence provided by Theorem~\ref{profdec}. Then
\begin{equation} \label{eq:energy}
\Phi(u_k,v_k) \ge \Phi(U^{(1)},V^{(1)}) + \sum_{n=2}^\infty \Phi_0(U^{(n)},V^{(n)}).
\end{equation}
Moreover, if, in addition, $\Phi'(u_k,v_k)\to 0$ and $\Phi(u_k,v_k) \to c \in \R$, then $(U^{(1)},V^{(1)})$ is a critical point of the functional $\Phi$, $(U^{(n)},V^{(n)})$ for any $n\ge 2$ is a critical point of the functional $\Phi_0$, and
\begin{equation} \label{eq:energy2}
\Phi(U^{(1)},V^{(1)}) + \sum_{n=2}^\infty \Phi_0(U^{(n)},V^{(n)}) = c.
\end{equation}
\end{proposition}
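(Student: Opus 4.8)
\medskip
\noindent\textbf{Proof strategy.}
The plan is to split $\Phi=\Phi_0+R$, where
\[
R(u,v)=-\frac1p\int_{\R^N}\big(a(x)|u|^p+b(x)|v|^p\big)-\frac2p\int_{\R^N}\beta(x)|u|^{p/2}|v|^{p/2}-\int_{\R^N}\kappa(x)uv
\]
gathers precisely the terms whose coefficients tend to $0$ at infinity, and to treat the two pieces by different mechanisms. Since $y_k^{(1)}=0$ we have $u_k\rightharpoonup U^{(1)}$ and $v_k\rightharpoonup V^{(1)}$ in $H^1(\R^N)$, and I would handle $R$ by the usual ``large ball plus tail'' argument: on a ball $B_\rho$ one has $u_k\to U^{(1)}$, $v_k\to V^{(1)}$ in $L^p(B_\rho)$ (Rellich), while on $B_\rho^c$ the supremum of $|a|+|b|+|\beta|+|\kappa|$ is small uniformly in $k$; this gives $R(u_k,v_k)\to R(U^{(1)},V^{(1)})$. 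The profiles with $n\ge2$ concentrate near $y_k^{(n)}\to\infty$ and thus contribute nothing to $R$.

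For the translation-invariant part $\Phi_0$ the key tool is the energy-splitting already built into Theorem~\ref{profdec}. Writing $S_k=\sum_nU^{(n)}(\cdot-y_k^{(n)})$ and $T_k=\sum_nV^{(n)}(\cdot-y_k^{(n)})$, the facts that $\rho_k=u_k-S_k\to0$ and $\tau_k=v_k-T_k\to0$ in $L^p$, that the shifts $y_k^{(n)}\in\Z^N$ are mutually divergent, that $a_0,b_0$ are $\Z^N$-periodic (hence invariant under those shifts), and that the profile series converge in $L^p$ uniformly in $k$ together yield, by an iterated Brezis--Lieb argument plus an elementary disjoint-support bound for the cross terms,
\[
\int_{\R^N}a_0|u_k|^p\to\sum_n\int_{\R^N}a_0|U^{(n)}|^p,\quad\int_{\R^N}b_0|v_k|^p\to\sum_n\int_{\R^N}b_0|V^{(n)}|^p,
\]
\[
\int_{\R^N}\beta_0|u_k|^{p/2}|v_k|^{p/2}\to\sum_n\int_{\R^N}\beta_0|U^{(n)}|^{p/2}|V^{(n)}|^{p/2}.
\]
For the quadratic part I would pass to the equivalent Hilbert norm $\|(u,v)\|_{\kappa_0}^2=\|u\|^2+\|v\|^2-2\kappa_0\int_{\R^N}uv$ of \eqref{c-13}; lattice translation is a $\|\cdot\|_{\kappa_0}$-isometry, so Theorem~\ref{profdec} also gives $\|(u_k,v_k)\|_{\kappa_0}^2=\sum_n\|(U^{(n)},V^{(n)})\|_{\kappa_0}^2+\|(\rho_k,\tau_k)\|_{\kappa_0}^2+o(1)$. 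Combining, and using that the $L^p$-type terms of $(\rho_k,\tau_k)$ vanish, one gets $\Phi_0(u_k,v_k)=\sum_n\Phi_0(U^{(n)},V^{(n)})+\tfrac12\|(\rho_k,\tau_k)\|_{\kappa_0}^2+o(1)$; adding $R(u_k,v_k)\to R(U^{(1)},V^{(1)})$ and $\Phi_0(U^{(1)},V^{(1)})+R(U^{(1)},V^{(1)})=\Phi(U^{(1)},V^{(1)})$ leads to
\[
\Phi(u_k,v_k)=\Phi(U^{(1)},V^{(1)})+\sum_{n\ge2}\Phi_0(U^{(n)},V^{(n)})+\tfrac12\|(\rho_k,\tau_k)\|_{\kappa_0}^2+o(1),
\]
and since the last term is nonnegative this proves \eqref{eq:energy} (in the $\liminf$ sense).

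For the second assertion, suppose $\Phi'(u_k,v_k)\to0$ in $E^*$ and $\Phi(u_k,v_k)\to c$. First I would check $\Phi'(U^{(1)},V^{(1)})=0$ by testing $\Phi'(u_k,v_k)$ against a fixed $(\varphi,\psi)\in C_0^\infty(\R^N)\times C_0^\infty(\R^N)$ and passing to the limit (using $u_k\to U^{(1)}$, $v_k\to V^{(1)}$ in $L^p_{loc}$ and the weak $H^1$ convergence). For $n\ge2$ I would test against $(\varphi(\cdot-y_k^{(n)}),\psi(\cdot-y_k^{(n)}))$, whose $E$-norm is independent of $k$, so the pairing still tends to $0$; after the substitution $x\mapsto x+y_k^{(n)}$ the coefficients $a,b,\beta,\kappa$ are evaluated near $y_k^{(n)}\to\infty$ and drop out, $a_0,b_0$ are unchanged by periodicity, and $u_k(\cdot+y_k^{(n)})\rightharpoonup U^{(n)}$, $v_k(\cdot+y_k^{(n)})\rightharpoonup V^{(n)}$, so the limit is $\Phi_0'(U^{(n)},V^{(n)})(\varphi,\psi)$; hence $\Phi_0'(U^{(n)},V^{(n)})=0$ for every $n\ge2$. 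It remains to show $\|(\rho_k,\tau_k)\|_{\kappa_0}\to0$, and this is where the argument really bites: applying the \emph{same} energy-splitting to the Nehari quantity $\Phi'(u_k,v_k)(u_k,v_k)$ (whose limit is $0$) gives, along a subsequence on which $\|(\rho_k,\tau_k)\|_{\kappa_0}^2\to\ell$,
\[
0=\Phi'(U^{(1)},V^{(1)})(U^{(1)},V^{(1)})+\sum_{n\ge2}\Phi_0'(U^{(n)},V^{(n)})(U^{(n)},V^{(n)})+\ell;
\]
the first two terms vanish because the corresponding profiles are critical points of $\Phi$, resp. $\Phi_0$, so $\ell=0$. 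Since the nonnegative sequence $\|(\rho_k,\tau_k)\|_{\kappa_0}^2$ has all subsequential limits equal to $0$ it converges to $0$, and the displayed identity above then yields \eqref{eq:energy2}.

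The technically heaviest step is the nonlinear energy-splitting — in effect a Brezis--Lieb lemma for a profile decomposition carrying a $\Z^N$-periodic weight — together with the interchange of $\lim_k$ with the uniformly convergent profile series; given Theorem~\ref{profdec} these are standard, if somewhat involved. The step I would be most careful with is the exact cancellation in the last display: it is precisely the Nehari identities of the limit profiles that absorb all the nonlinear energy carried by $(u_k,v_k)$ and force the remainder to vanish in norm. Note that no sign hypotheses on $a,b,\beta,\kappa$ enter the argument, so the proposition holds in the stated generality.
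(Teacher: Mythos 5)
Your proof is correct, and its overall architecture coincides with the paper's: split off the perturbation $R=\Phi-\Phi_0$ and dispose of it by weak continuity (local compactness plus decay of the coefficients), handle the periodic part by an iterated Brezis--Lieb / asymptotic-orthogonality splitting along the profile decomposition, and identify the profiles as critical points by testing against translated test functions. The one step where you genuinely diverge is the proof that the remainder $(\rho_k,\tau_k)$ tends to $0$ in the $E$-norm, which is what upgrades the inequality \eqref{eq:energy} to the equality \eqref{eq:energy2}. The paper writes $\Phi'(u,v)=(u,v)+\varphi'(u,v)$ with $\varphi':L^p\times L^p\to E$ continuous and deduces $(\rho_k,\tau_k)\to 0$ in $E$ from $\Phi'(u_k,v_k)\to 0$, the criticality of the profiles, and the $L^p$-smallness of the remainder; you instead apply the energy splitting to the Nehari quantity $\Phi'(u_k,v_k)(u_k,v_k)\to 0$ and let the Nehari identities of the limit profiles absorb everything except $\|(\rho_k,\tau_k)\|_{\kappa_0}^2$. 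Both routes are standard; yours is arguably more self-contained (it never needs to realize $\varphi'$ as a continuous map into $E$), but it does require the quadratic splitting \emph{with remainder},
\begin{equation*}
\|(u_k,v_k)\|_{\kappa_0}^2=\sum_n\|(U^{(n)},V^{(n)})\|_{\kappa_0}^2+\|(\rho_k,\tau_k)\|_{\kappa_0}^2+o(1),
\end{equation*}
which is slightly more than the stated inequality in Theorem~\ref{profdec}; it follows from the asymptotic orthogonality $\rho_k(\cdot+y_k^{(n)})\rightharpoonup 0$ together with the uniform convergence of the profile series, and you should say a word about that. Finally, you are right that \eqref{eq:energy}, read literally for fixed $k$, should carry an $o(1)$ (or be read as a $\liminf$ statement); the paper is silent on this point.
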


\begin{proof}
By continuity of $\Phi-\Phi_0$ with respect to the weak convergence it suffices to prove \eqref{eq:energy} for $\Phi=\Phi_0$, which can be immediately obtained by iteration of the Brezis-Lieb lemma (see \cite{CwiTi}, Appendix B, for the scalar case). Since the map $\Phi'-\Phi'_0$ is continuous with respect to the weak convergence, the conclusion that $(U^{(n)},V^{(n)})$ is a critical point for respective functional is immediate. In order to show \eqref{eq:energy2}, let $\rho_k$ and $\tau_k$ be as in \eqref{pd} and note that $\Phi'(u,v)=(u,v)+ \varphi'(u,v)$ with continuous $\varphi'(u,v):L^p\times L^p\to E$. From here and from the criticality of points $(U^{(n)},V^{(n)})$ it follows from $(u_k,v_k)+ \varphi'(u_k,v_k)=\Phi'(u_k,v_k)\to 0$ by a standard continuity argument that $(\rho_k,\tau_k)\to 0$ in $E$. Consequently, recalling again that $\Phi-\Phi_0$ is weakly continuous, we have
\begin{eqnarray*}
c & = &\lim \Phi(u_k,v_k)\\
& = &\lim \Phi(\sum_n U^{(n)}(\cdot -y_k^{(n)}), \sum_n V^{(n)}(\cdot -y_k^{(n)}))\\
&= &\lim \Phi_0(\sum_n U^{(n)}(\cdot -y_k^{(n)}), \sum_n V^{(n)}(\cdot -y_k^{(n)}))+(\Phi-\Phi_0)(U^{(1)},V^{(1)})\\
&= &\sum \Phi_0(U^{(n)},V^{(n)})+(\Phi-\Phi_0)(U^{(1)},V^{(1)}),
\end{eqnarray*}
which proves \eqref{eq:energy2}.
\end{proof}

\section{Ground state solutions - functional-analytic setting}

In this section we study existence of ground state solutions for a functional-analytic model of our problem. We identify, for a class of functionals defined below, the ground state with the mountain pass solution. We then formulate a sufficient condition for existence of a ground state in terms of comparison with the problem at infinity (which, in these general settings, is not required itself to admit a ground state). Verification of the comparison condition and existence of the ground state for the problem at infinity is a subject of the next section, where more specific properties of the functional are invoked.
The number $p>2$ remains fixed throughout the section.

\begin{definition} Let $H$ be a Hilbert space. We say that a functional $\Phi\in C^2(H)$ is of class ${\mathcal S}_p$, if it is of the form $\Phi(u)=\frac12\|u\|^2-\frac{1}{p}\psi(u)$,
where the functional $\psi\in C^2(H)$ is bounded on bounded sets, homogeneous of degree $p$ and positive except at $u=0$, the norm refers to any of equivalent norms of $H$, and assume that $\Phi'$ is weak-to-weak continuous on $H$.  It is to be understood that the norm $\|\cdot\|$ is not fixed, but is one of equivalent norms of $H$, that may vary for different functionals in the class.
\end{definition}

Note that the functional \eqref{b-1}, and consequently \eqref{c-2}, are of the class  ${\mathcal S}_p$.

\begin{lemma}
\label{comp}
Let $\Phi\in {\mathcal S_p}$ and
let ${\mathcal N}=\{u\in H\setminus\{0\}: (\Phi'(u),u)=0\}$. Then $w\in H\setminus\{0\}$ minimizes $\Phi$ on $\mathcal N$ if and only if the path $t\mapsto tw$, $0\le t<\infty$ minimizes
\begin{equation}
\label{abstr-mpg}
c=\inf_{\eta\in P}\max_{t>0}\Phi(\eta(t)),
\end{equation}
where
\begin{equation}
P=\{\eta\in C([0,\infty);H), \eta(0)=0, \Phi(\eta(+\infty))=-\infty\}.
\end{equation}
Moreover, $w$ is a critical point of $\Phi$.
\end{lemma}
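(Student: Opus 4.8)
The plan is to identify both the Nehari level $c_{{\mathcal N}}:=\inf_{{\mathcal N}}\Phi$ and the minimax value $c$ in \eqref{abstr-mpg} with one another, to read the stated equivalence off this identification, and finally to recover criticality of the minimizer by a Lagrange‑multiplier argument exactly as in Lemma~\ref{lem-2.1}.

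\emph{Fibering maps and the easy inequality.} For $w\in H\setminus\{0\}$ I would study $g_w(t):=\Phi(tw)=\tfrac{t^2}{2}\|w\|^2-\tfrac{t^p}{p}\psi(w)$. Since $\psi(w)>0$, $\psi$ is homogeneous of degree $p$, and $p>2$, the map $g_w$ increases strictly on $(0,\tau(w))$, decreases strictly on $(\tau(w),\infty)$ with $g_w(t)\to-\infty$, where $\tau(w)=\big(\|w\|^2/\psi(w)\big)^{1/(p-2)}$; its global maximum is $m(w):=\Phi(\tau(w)w)$, and $\tau(w)w$ is the unique point of ${\mathcal N}$ on the ray through $w$. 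Hence each ray path $\eta_w(t)=tw$ lies in $P$ and $\max_{t>0}\Phi(\eta_w(t))=m(w)$; moreover $u\in{\mathcal N}\iff\tau(u)=1$, so $m=\Phi$ on ${\mathcal N}$, and since $m$ is constant along rays, $\inf_{w\ne0}m(w)=\inf_{{\mathcal N}}m=c_{{\mathcal N}}$. Testing \eqref{abstr-mpg} with the paths $\eta_w$ then gives $c\le c_{{\mathcal N}}$. I also record here that, $\psi$ being bounded on the unit sphere and homogeneous of degree $p$, one has $\psi(u)\le C\|u\|^p$; hence ${\mathcal N}$ is bounded away from $0$ and $\Phi=\big(\tfrac12-\tfrac1p\big)\|\cdot\|^2>0$ on ${\mathcal N}$.

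\emph{Crossing.} For the reverse inequality $c\ge c_{{\mathcal N}}$ I would show that every $\eta\in P$ meets ${\mathcal N}$. Let $\gamma(t)=(\Phi'(\eta(t)),\eta(t))=\|\eta(t)\|^2-\psi(\eta(t))$, which is continuous since $\Phi'$ and $\eta$ are. The bound $\psi(u)\le C\|u\|^p$ together with continuity of $\eta$ at $0$ yields $\delta>0$ with $\gamma\ge0$ on $[0,\delta]$, while $\Phi(\eta(t))\to-\infty$ forces $\Phi(\eta(t))<0$, hence $\gamma(t)<0$, for all large $t$. Put $t_0=\inf\{t\ge0:\gamma(t)<0\}$. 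Then $\delta\le t_0<\infty$, $\gamma(t_0)=0$ by continuity, and $\eta(t_0)\ne0$ — otherwise $\|\eta(\cdot)\|$ would be small, hence $\gamma\ge0$, on a neighbourhood of $t_0$, contradicting that $\gamma<0$ at points arbitrarily close to $t_0$ from the right. Thus $\eta(t_0)\in{\mathcal N}$ and $\max_{t>0}\Phi(\eta(t))\ge\Phi(\eta(t_0))\ge c_{{\mathcal N}}$, so $c=c_{{\mathcal N}}$. Combining with the first step: for $w\in{\mathcal N}$ the path $t\mapsto tw$ realizes the infimum in \eqref{abstr-mpg} if and only if $\Phi(w)=c=c_{{\mathcal N}}$, i.e. if and only if $w$ minimizes $\Phi$ on ${\mathcal N}$; a general minimizing ray path reduces to this case after the normalization $\tau(w)=1$ (replacing $w$ by $\tau(w)w$, which traces the same ray).

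\emph{Criticality, and the main difficulty.} If $w$ minimizes $\Phi$ on ${\mathcal N}=G^{-1}(0)$, $G(u)=\|u\|^2-\psi(u)\in C^1$, then Euler's identity $(\psi'(u),u)=p\psi(u)$ gives $(G'(w),w)=(2-p)\|w\|^2<0$ since $\|w\|\ge\sigma>0$; hence $G'(w)\ne0$, ${\mathcal N}$ is a $C^1$ manifold near $w$, and there is $\ell\in\R$ with $\Phi'(w)=\ell\,G'(w)$. Pairing with $w$ yields $0=(\Phi'(w),w)=\ell(2-p)\|w\|^2$, so $\ell=0$ and $\Phi'(w)=0$. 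I expect the crossing step to be the main obstacle: because admissible paths emanate from $0$ rather than from a point already below the mountain‑pass level, one must use the quantitative estimate $\psi(u)\le C\|u\|^p$ both to make $\gamma\ge0$ near the origin and — more delicately — to rule out that the first zero $t_0$ of $\gamma$ is the origin itself.
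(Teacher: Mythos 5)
Your proof is correct and follows essentially the same route as the paper: identify the mountain-pass level with the Nehari infimum by testing with ray paths and by showing every admissible path crosses $\mathcal N$, then obtain criticality of the minimizer via a Lagrange-multiplier/nondegeneracy argument as in Lemma~\ref{lem-2.1}. Your detailed "crossing" step (and the normalization $\tau(w)=1$ in the converse direction) simply fills in details that the paper's proof asserts without elaboration.
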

\begin{proof}
1. First note that $\Phi$ has the classical mountain pass geometry. Note also that since $\psi$ is bounded on bounded sets and homogeneous, $0\le\psi(u)\le C\|u\|^p$ which implies that $\mathcal N$ is bounded away from the origin.  Furthermore,  by Euler theorem for homogeneous functions, $(\psi{''}(u)u,u)=p(p-1)\psi(u)>0$ unless $u=0$, and therefore a minimizer $w$ of $\Phi$ on $\mathcal N$ is a nonzero critical point of $\Phi$.

2. Note that every path in $P$ intersects $\mathcal N$, which implies that $c\ge \Phi(w)$. On the other hand, $c\le \max_t \Phi(tw)=\Phi(w)$.
It is immediate then that whenever $w$ is a minimizer of  $\Phi$ on $\mathcal N$, the path $t\mapsto tw$ minimizes \eqref{abstr-mpg}.

3. Conversely, if $w\in H\setminus\{0\}$ is such that the path $t\mapsto tw$ minimizes \eqref{abstr-mpg}, then the maximum of $\Phi$ on the path is necessarily a critical point of $\Phi$, and thus belongs to $\mathcal N$, and consequently is attained at $t=1$, so $w$ is a critical point of $\Phi$. If, however, $w$ is not a minimal point of $\Phi$ on $\mathcal N$, and $w_1$ is such a minimizer, the maximum of $\Phi$ on $tw_1$ will be smaller than $c$, a contradiction.
\end{proof}

\begin{lemma}
\label{lem:nonzero}
Let $\Phi\in\mathcal{S}_p$. Let $c$ be the minimax value \eqref{abstr-mpg}.
Then, if  $\Phi(w_k)\to c$ and $\Phi'(w_k)\to 0$ in $H$, then $w_k$ is bounded. Moreover, if $w_k\rightharpoonup w\neq 0$, then $w_k\to w$ in the norm of $H$ and $w$ is a ground state of $\Phi$.
\end{lemma}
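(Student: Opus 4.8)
The plan is to establish boundedness first, then handle the norm convergence via the special structure $\Phi'(u)=u+\varphi'(u)$ with $\varphi'$ compact-type, and finally identify the limit as a ground state using Lemma~\ref{comp}.

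First I would prove boundedness. Since $\Phi(w_k)\to c$ and $\Phi'(w_k)\to 0$, the standard Nehari-type manipulation gives
\[
\left(\tfrac12-\tfrac1p\right)\|w_k\|^2 = \Phi(w_k)-\tfrac1p\,(\Phi'(w_k),w_k) = c+o(1)+o(\|w_k\|),
\]
using homogeneity of $\psi$ (so $(\psi'(w_k),w_k)=p\,\psi(w_k)$ and $\Phi(w_k)-\frac1p(\Phi'(w_k),w_k)=(\frac12-\frac1p)\|w_k\|^2$). Since $p>2$, this forces $\|w_k\|$ to be bounded.

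Next, assume $w_k\rightharpoonup w\neq 0$ (along a renamed subsequence). The key point is that $\Phi'(u)=u+\varphi'(u)$ where $\varphi'=-\frac1p\psi'$ is weak-to-weak continuous (part of class $\mathcal S_p$) and, being derived from a functional bounded on bounded sets, maps bounded sequences in a way that interacts well with weak convergence — more precisely, from $\Phi'(w_k)\to 0$ we get $w_k = -\varphi'(w_k)+o(1)$, and passing to the weak limit, $w=-\varphi'(w)$, i.e. $\Phi'(w)=0$. To upgrade to strong convergence, I would test $\Phi'(w_k)\to 0$ against $w_k-w$: since $\varphi'$ is weak-to-weak continuous, $\varphi'(w_k)\rightharpoonup\varphi'(w)$, hence $(\varphi'(w_k),w_k-w)\to 0$ (the pairing of a weakly convergent sequence of functionals with a weakly convergent sequence requires a small compactness input; here one uses that $w_k-w\rightharpoonup 0$ together with $\varphi'(w_k)\rightharpoonup \varphi'(w)$ and the actual compact embeddings underlying the concrete functional \eqref{b-1}, or, in the abstract setting, one works with the explicit form of $\psi$). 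Then
\[
o(1) = (\Phi'(w_k),w_k-w) = \|w_k\|^2 - (w_k,w) + (\varphi'(w_k),w_k-w) = \|w_k\|^2-\|w\|^2 + o(1),
\]
so $\|w_k\|\to\|w\|$, which together with $w_k\rightharpoonup w$ in the Hilbert space $H$ yields $w_k\to w$ in norm.

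Finally, strong convergence gives $\Phi(w)=c$ and $\Phi'(w)=0$ with $w\neq 0$, so $w\in\mathcal N$ and $\Phi(w)=c$. By Lemma~\ref{comp}, the minimax value $c$ of \eqref{abstr-mpg} equals $\inf_{\mathcal N}\Phi$, so $w$ minimizes $\Phi$ on $\mathcal N$, i.e. $w$ is a ground state of $\Phi$. The main obstacle I anticipate is the compactness step establishing $(\varphi'(w_k),w_k-w)\to 0$: weak-to-weak continuity of $\Phi'$ alone does not immediately give convergence of the duality pairing of two merely weakly convergent sequences, so one must either invoke the concrete compact Sobolev embeddings behind $\psi$ (where $\varphi'$ factors through $L^p\times L^p$ as noted in Proposition~\ref{prop:energy}) or use the homogeneity and the explicit polynomial structure of $\psi$ to extract the needed local strong convergence.
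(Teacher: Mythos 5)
Your boundedness argument is the same as the paper's (the Ambrosetti--Rabinowitz manipulation using homogeneity of $\psi$), and your identification of $w$ as a nonzero critical point via weak-to-weak continuity of $\Phi'$ is also fine. The problem is the step you yourself flag: $(\varphi'(w_k),w_k-w)\to 0$ does \emph{not} follow from the axioms of the class $\mathcal{S}_p$. Weak-to-weak continuity of $\varphi'$ gives $\varphi'(w_k)\rightharpoonup\varphi'(w)$, but the pairing of two merely weakly convergent sequences need not converge (take $f_k=x_k=e_k$ an orthonormal basis: $(e_k,e_k-0)=1$). Since the lemma is stated for an arbitrary $\Phi\in\mathcal{S}_p$, falling back on compact Sobolev embeddings or the polynomial structure of the concrete $\psi$ in \eqref{b-1} is not available; as written, your proof of strong convergence has a genuine hole in the abstract setting.

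The paper closes this gap without any compactness, and you should adopt its device. For a Palais--Smale sequence one has $\Phi(w_k)=(\tfrac12-\tfrac1p)\|w_k\|^2+o(1)\to c$, so $\|w_k\|^2$ converges. Since $w$ is a nonzero critical point, $w\in\mathcal N$ and hence $\Phi(w)=(\tfrac12-\tfrac1p)\|w\|^2$; weak lower semicontinuity of the norm then gives $c=\lim(\tfrac12-\tfrac1p)\|w_k\|^2\ge(\tfrac12-\tfrac1p)\|w\|^2=\Phi(w)$. Conversely, evaluating $\Phi$ on the admissible path $t\mapsto tw$ (whose maximum is at $t=1$ because $w\in\mathcal N$) gives $c\le\Phi(w)$. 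Hence $\Phi(w)=c$ and $\|w_k\|^2\to\|w\|^2$, which combined with $w_k\rightharpoonup w$ yields strong convergence in the Hilbert space; Lemma~\ref{comp} then identifies $w$ as a ground state exactly as in your last paragraph. In short: replace the duality-pairing/compactness step by the two-sided comparison of $c$ with $\Phi(w)$ through the Nehari identity, and the argument goes through for the whole class $\mathcal{S}_p$.
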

\begin{proof} The proof of the first assertion of the lemma follows the classical argument of Ambrosetti-Rabinowitz. Multiplication of $\Phi'(w_k)\to 0$ by $w_k/\|w_k\|$ (the case when $w_k=0$ on a subsequence is trivial) gives
 $$ \|w_k\|-\frac{\psi(w_k)}{\|w_k\|}\to 0.$$
This implies that $\psi(w_k)=\|w_k\|(\|w_k\|+o(1))$
and $\Phi(w_k)=(\frac{1}{2}-\frac{1}{p})\|w_k\|^2+o(\|w_k\|)\to c$, which implies that $w_k$ is bounded in norm. Then we also have $\Phi(w_k)=(\frac{1}{2}-\frac{1}{p})\|w_k\|^2+o(1)$.

We now prove the second assertion when $w_k\rightharpoonup w\neq 0$. By weak semicontinuity of the norm, $$\Phi(w_k)=(\frac{1}{2}-\frac{1}{p})\|w_k\|^2+o(1)\ge (\frac{1}{2}-\frac{1}{p})\|w\|^2+o(1)=\Phi(w)+o(1).$$
By weak-to-weak continuity of $\Phi'$, the element $w$ is a (nonzero) critical point of $\Phi$, and thus $w\in\mathcal N$. Evaluation of the functional on the path $t\mapsto tw$ gives $\lim \Phi(w_k)=c\le \Phi(w)$. Together with the previous inequality we have that $\Phi(w_k)\to \Phi(w)$. This implies that
$\|w_k\|^2\to \|w\|^2$, which in turn means that $w_k\to w$ in $H$. By Lemma~\ref{comp}, the element $w$ is a ground state of $\Phi$.
\end{proof}
Lemma~\ref{lem:nonzero} shows that the ground state exists as long as the critical sequence at the mountain pass level does not converge weakly to zero. The next lemma introduces a (still implicit) sufficient condition for the latter and thus for the existence of a ground state.

From now on we assume that $H$ is a space of functions  $\R^N\to \R^m$, $m\in\N$, such that for every sequence $y_k\in \mathbb{R}^N$, $|y_k|\to\infty $ and every $w\in H$, $w(\cdot-y_k)\rightharpoonup 0$.

\begin{definition}
Let $\Phi\in\mathcal{S}_p$. One says that a functional $\Phi_0\in\mathcal{S}_p$  is a limit of $\Phi$ at infinity if for every $y\in \mathbb{Z}^N$ and every $w\in H$,
$\Phi_0(w(\cdot+y))= \Phi_0(w)$,
and the maps $\Phi-\Phi_0$ and $\Phi'-\Phi'_0$ are continuous with respect to weak convergence.
\end{definition}

Note that $c\le c_0$, which is easy to show by evaluating $\Phi$ on the paths approximating $c_0$ for $\Phi_0$, translated by $y$ far enough from the origin of $\Z^N$ (on which the difference between $\Phi_0$ and $\Phi$ is insignificant).

In what follows we will make the following assumption on the functional $\psi_0$:
\begin{equation}
\label{coco}
w_k\in H, y_k\in\Z^N, w_k(\cdot-y_k)\rightharpoonup 0 \Longrightarrow \psi_0(w_k)\to 0.
\end{equation}
Note that our notation is consistent with the notation in the previous sections in the sense that whenever $0<\kappa_0<1$,  the functional \eqref{c-2} is the limit at infinity of the functional \eqref{b-1}.

\begin{lemma} Assume that  $\Phi\in\mathcal{S}_p$ has a limit functional $\Phi_0\in\mathcal{S}_p$ at infinity and that \eqref{coco} is satisfied.
Let $c$ and $c_0$ be the mountain pass values \eqref{abstr-mpg} for $\Phi$
and $\Phi_0$, respectively. If $c<c_0$, then $\Phi$ has a ground state. \end{lemma}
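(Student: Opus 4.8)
The plan is to run the standard concentration–compactness dichotomy on a mountain–pass Palais--Smale sequence for $\Phi$, using the strict inequality $c<c_0$ to exclude vanishing. Since $\Phi\in\mathcal{S}_p$ has the mountain pass geometry (see the proof of Lemma~\ref{comp}), there is a sequence $w_k\in H$ with $\Phi(w_k)\to c$ and $\Phi'(w_k)\to 0$; by the first assertion of Lemma~\ref{lem:nonzero} it is bounded, so after passing to a subsequence $w_k\rightharpoonup w$ in $H$. If $w\neq 0$, the second assertion of Lemma~\ref{lem:nonzero} gives at once $w_k\to w$ and that $w$ is a ground state of $\Phi$, so the whole issue is to rule out $w=0$.

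Suppose $w_k\rightharpoonup 0$. Since $\Phi-\Phi_0$ and $\Phi'-\Phi'_0$ are weakly continuous and vanish at the origin, $\Phi_0(w_k)=\Phi(w_k)-(\Phi-\Phi_0)(w_k)\to c$ and $\Phi'_0(w_k)=\Phi'(w_k)-(\Phi'-\Phi'_0)(w_k)\to 0$, so $(w_k)$ is a bounded Palais--Smale sequence for $\Phi_0$ at level $c$. Pairing $\Phi'_0(w_k)\to 0$ with the bounded sequence $w_k$ and using Euler's identity for the degree-$p$ homogeneous $\psi_0$ gives $\|w_k\|^2-\psi_0(w_k)\to 0$, hence $\Phi_0(w_k)=(\tfrac12-\tfrac1p)\|w_k\|^2+o(1)\to c$; as $c>0$ (mountain pass geometry) this forces $\psi_0(w_k)\to \frac{c}{1/2-1/p}>0$, so in particular $\psi_0(w_k)\not\to 0$.

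By the contrapositive of hypothesis~\eqref{coco} there is a sequence $y_k\in\Z^N$ along which $w_k(\cdot-y_k)$ does not converge weakly to $0$; passing to a subsequence, $v_k:=w_k(\cdot-y_k)\rightharpoonup \bar w\neq 0$ in $H$. Since the $y_k$ lie in $\Z^N$ and $w_k\rightharpoonup 0$, they cannot stay bounded (a bounded sequence in $\Z^N$ is eventually constant along a subsequence, and translation by a fixed vector is weakly continuous), so $|y_k|\to\infty$. Lattice translations are isometries of $H$ leaving $\Phi_0$ invariant, so $\Phi_0(v_k)\to c$ and $\Phi'_0(v_k)\to 0$, and weak-to-weak continuity of $\Phi'_0$ yields $\Phi'_0(\bar w)=0$; as $\bar w\neq 0$, this puts $\bar w$ on the Nehari set $\mathcal N_0=\{u\in H\setminus\{0\}:(\Phi'_0(u),u)=0\}$, so $\|\bar w\|^2=\psi_0(\bar w)$. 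On the one hand, $c=\lim\bigl(\Phi_0(v_k)-\tfrac1p\Phi'_0(v_k)v_k\bigr)=(\tfrac12-\tfrac1p)\lim\|v_k\|^2\ge(\tfrac12-\tfrac1p)\|\bar w\|^2=\Phi_0(\bar w)$ by weak lower semicontinuity of the norm. On the other hand, the ray $t\mapsto t\bar w$ lies in the path class $P$ (since $\psi_0(\bar w)>0$ and $p>2$ force $\Phi_0(t\bar w)\to-\infty$), and because $\bar w\in\mathcal N_0$ its maximum over $t>0$ is attained at $t=1$, so $\Phi_0(\bar w)=\max_{t>0}\Phi_0(t\bar w)\ge c_0$. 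Hence $c\ge c_0$, contradicting $c<c_0$. Therefore $w\neq 0$, and the lemma is proved.

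The step I expect to be most delicate is the extraction of the nontrivial profile $\bar w$: one must read \eqref{coco} as the statement that the "mass'' of $\psi_0$ can escape only through lattice-translated bubbles, check that $\psi_0(v_k)\not\to 0$ survives the translation (it does, by the $\Z^N$-invariance of $\Phi_0$, hence of $\psi_0$), and reduce the translating vectors to $\Z^N$ so that $\Phi_0$ is genuinely invariant under them; the remaining ingredients — the transfer of the Palais--Smale property from $\Phi$ to $\Phi_0$ and the energy comparison with $c_0$ along the rescaled ray — are routine. An essentially equivalent route is to invoke the profile decomposition of Theorem~\ref{profdec} together with the energy-splitting of Proposition~\ref{prop:energy}: the first profile is then $0$, at least one later profile $W^{(n_0)}\neq 0$ is a critical point of $\Phi_0$ with $\Phi_0(W^{(n_0)})\ge c_0$, and since all profiles carry nonnegative energy summing to $c$ one again arrives at $c\ge c_0$.
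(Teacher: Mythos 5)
Your proof is correct and follows essentially the same route as the paper: take a mountain-pass Palais--Smale sequence, reduce via Lemma~\ref{lem:nonzero} to excluding a zero weak limit, use \eqref{coco} (in contrapositive, with $c>0$) to extract lattice translations $y_k$, $|y_k|\to\infty$, yielding a nonzero critical point $\bar w$ of $\Phi_0$, and then combine $\bar w\in\mathcal N_0$ with weak lower semicontinuity of the norm to force $c\ge\Phi_0(\bar w)\ge c_0$, contradicting $c<c_0$. Your write-up is in fact slightly more explicit than the paper's on two minor points it leaves implicit (why the translating vectors must escape to infinity, and why the ray $t\mapsto t\bar w$ certifies $\Phi_0(\bar w)\ge c_0$).
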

\begin{proof}
Let $w_k\rightharpoonup 0$ be a critical sequence for $\Phi$ with $\Phi(w_k)\to c$.
By definition of the functional at infinity,  we have $c=\lim \Phi(w_k)=\lim \Phi_0(w_k)=\lim \Phi_0(w_k(\cdot-y_k))$ for any sequence $y_k\in\Z^N$ such that $|y_k|\to\infty $. Note also that we may assume that $w_k(\cdot-y_k)$ has a weak limit $w_0$, which is necessarily a critical point of $\Phi_0$.
We may also assume that $w_0\neq 0$, since if it would happen for any sequence $y_k$ with $|y_k|\to\infty $, by \eqref{coco} we would have $\Phi_0(w_k)\to 0$, and thus $c=\lim\Phi(w_k)= 0$, a contradiction.
Then, by weak-to-weak continuity of $\Phi_0'$ (assured by the definition of $\mathcal{S}_p$) we have $w_0\in \mathcal{N}_0$,  and therefore $\Phi_0(w_0)\ge c_0$.

Since $\Phi_0(w_k(\cdot-y_k))=(\frac{1}{2}-\frac{1}{p})\|w_k\|_0^2+o(1)$ and the norm is weakly lower semicontinuous, we have $$c+o(1)=\Phi_0(w_k(\cdot-y_k))\ge (\frac{1}{2}-\frac{1}{p})\|w_0\|_0^2=\Phi_0(w_0)\ge c_0,$$
which contradicts the assumption  $c<c_0$.
We conclude that the critical sequence has to have a subsequence with a nonzero weak limit, which by Lemma~\ref{lem:nonzero} implies existence of a ground state.
\end{proof}

\begin{lemma}
\label{lem:less}
Assume that  $\Phi\in\mathcal{S}_p$ has a limit functional $\Phi_0\in\mathcal{S}_p$ at infinity.
Let $c$ and $c_0$ be the mountain pass values \eqref{abstr-mpg} for $\Phi$
and $\Phi_0$, respectively. Assume that $\Phi_0$ has a ground state $w_0$. If
\begin{equation}\label{eq:less}
(\frac{1}{2}-\frac{1}{p})\left(\frac{\|w_0\|^p}{\psi(w_0)}\right)^\frac{2}{p-2}<\Phi_0(w_0),
\end{equation}
then $c<c_0$.
 \end{lemma}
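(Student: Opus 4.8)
The plan is to exploit the homogeneity of $\psi$ and the mountain-pass characterisation \eqref{abstr-mpg} of $c$. Since $\Phi\in\mathcal{S}_p$, for any fixed $u\in H\setminus\{0\}$ the function $t\mapsto\Phi(tu)=\frac{t^2}{2}\|u\|^2-\frac{t^p}{p}\psi(u)$ is maximised at $t_u=\left(\|u\|^2/\psi(u)\right)^{1/(p-2)}$, and a direct computation gives
\begin{equation}
\max_{t>0}\Phi(tu)=\left(\frac12-\frac1p\right)\left(\frac{\|u\|^2}{\psi(u)^{2/p}}\right)^{\frac{p}{p-2}}
=\left(\frac12-\frac1p\right)\left(\frac{\|u\|^p}{\psi(u)}\right)^{\frac{2}{p-2}}.
\end{equation}
Because the path $t\mapsto tu$ lies in $P$ once $\psi(u)>0$ (so that $\Phi(tu)\to-\infty$), this yields the upper bound $c\le\max_{t>0}\Phi(tu)$ for every such $u$. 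Note here that $\|\cdot\|$ and $\psi$ are the norm and functional associated with $\Phi$, whereas the ground state $w_0$ comes from $\Phi_0$; the point of hypothesis \eqref{eq:less} is precisely that the right-hand side, which is $\max_{t>0}\Phi(tw_0)$ written in terms of the data of $\Phi$, is strictly below $\Phi_0(w_0)=c_0$.

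Concretely, I would first record that since $w_0$ is a ground state of $\Phi_0$, Lemma~\ref{comp} (applied to $\Phi_0$) gives $c_0=\Phi_0(w_0)$. Next, plug $u=w_0$ into the formula above for $\Phi$ to get
\begin{equation}
c\le\max_{t>0}\Phi(tw_0)=\left(\frac12-\frac1p\right)\left(\frac{\|w_0\|^p}{\psi(w_0)}\right)^{\frac{2}{p-2}},
\end{equation}
where now $\|\cdot\|$ and $\psi$ are those of $\Phi$. Combining this with the hypothesis \eqref{eq:less} gives
\begin{equation}
c\le\left(\frac12-\frac1p\right)\left(\frac{\|w_0\|^p}{\psi(w_0)}\right)^{\frac{2}{p-2}}<\Phi_0(w_0)=c_0,
\end{equation}
which is exactly the claim. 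One small point to check at the start: $\psi(w_0)>0$ since $w_0\ne0$ and $\psi$ is positive away from the origin, so the path $t\mapsto tw_0$ is admissible in $P$ and the maximum is finite and attained.

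The argument is essentially a one-line computation once the mountain-pass formula for $\max_t\Phi(tu)$ is in place, so there is no serious obstacle. The only thing to be careful about is the bookkeeping of which norm/functional belongs to which of $\Phi,\Phi_0$: the left-hand side of \eqref{eq:less} is built from the norm $\|\cdot\|$ and nonlinearity $\psi$ of $\Phi$ evaluated at the point $w_0$, and it should be read as "the mountain-pass level of $\Phi$ along the ray through $w_0$", while $\Phi_0(w_0)=c_0$ because $w_0$ is a ground state of $\Phi_0$ and, again by Lemma~\ref{comp}, the ground-state level coincides with the mountain-pass level. With that identification the inequality $c<c_0$ is immediate.
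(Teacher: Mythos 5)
Your proposal is correct and follows essentially the same route as the paper: evaluate $\Phi$ along the ray $t\mapsto tw_0$, use the mountain-pass characterisation to get $c\le\max_{t>0}\Phi(tw_0)=(\tfrac12-\tfrac1p)\bigl(\|w_0\|^p/\psi(w_0)\bigr)^{2/(p-2)}$, and combine with \eqref{eq:less} and $c_0=\Phi_0(w_0)$. Your additional checks (that $\psi(w_0)>0$ makes the ray admissible in $P$, and the careful bookkeeping of which norm and nonlinearity belong to $\Phi$ versus $\Phi_0$) are correct and only make explicit what the paper leaves implicit.
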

\begin{proof}
By defintion, $$c\le\max_{t>0} \Phi(tw_0)=\max_{t>0}\frac{t^2}{2}\|w_0\|^2-\frac{t^p}{p}\psi(w_0).$$ Elementary evaluation of the maximum gives
$$
c\le (\frac{1}{2}-\frac{1}{p})\left(\frac{\|w_0\|^p}{\psi(w_0)}\right)^\frac{2}{p-2}.
$$
Since $c_0=\Phi_0(w_0)$, the inequality above and \eqref{eq:less} imply $c<c_0$.
\end{proof}
\begin{corollary}\label{cor:less}
Under conditions of Lemma~\ref{lem:less}, if
\begin{equation}\label{eq:less2}
\frac{\|w_0\|^p}{\psi(w_0)}<\frac{\|w_0\|_0^p}{\psi_0(w_0)},
\end{equation}
then $c<c_0$.
\end{corollary}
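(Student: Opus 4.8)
The plan is to reduce \eqref{eq:less2} to the hypothesis \eqref{eq:less} of Lemma~\ref{lem:less} and then quote that lemma. The two displayed quantities in \eqref{eq:less2} are exactly the reciprocals of the factors appearing in the optimization over the scaling $t\mapsto tw_0$: for $\Phi_0$ one has $\max_{t>0}\Phi_0(tw_0)=(\frac12-\frac1p)(\|w_0\|_0^p/\psi_0(w_0))^{2/(p-2)}$, and since $w_0$ is a ground state of $\Phi_0$ this maximum equals $c_0=\Phi_0(w_0)$ (here I use Lemma~\ref{comp} applied to $\Phi_0$, which identifies the ground state with the mountain pass path $t\mapsto tw_0$, so the maximum is attained at $t=1$). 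Hence
\begin{equation*}
\Phi_0(w_0)=\left(\frac12-\frac1p\right)\left(\frac{\|w_0\|_0^p}{\psi_0(w_0)}\right)^{\frac{2}{p-2}}.
\end{equation*}

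The key step is then purely algebraic: the map $s\mapsto (\frac12-\frac1p)s^{2/(p-2)}$ is strictly increasing on $(0,\infty)$ because $p>2$ forces the exponent $2/(p-2)>0$ and the prefactor $\frac12-\frac1p>0$. Therefore the assumed strict inequality \eqref{eq:less2}, namely $\|w_0\|^p/\psi(w_0)<\|w_0\|_0^p/\psi_0(w_0)$, yields
\begin{equation*}
\left(\frac12-\frac1p\right)\left(\frac{\|w_0\|^p}{\psi(w_0)}\right)^{\frac{2}{p-2}}<\left(\frac12-\frac1p\right)\left(\frac{\|w_0\|_0^p}{\psi_0(w_0)}\right)^{\frac{2}{p-2}}=\Phi_0(w_0),
\end{equation*}
which is precisely condition \eqref{eq:less}. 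Invoking Lemma~\ref{lem:less} now gives $c<c_0$, completing the proof.

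I expect no genuine obstacle here; the corollary is a one-line consequence of Lemma~\ref{lem:less} once one observes that \eqref{eq:less2} is the comparison of the two bracketed bases and that $x\mapsto x^{2/(p-2)}$ is monotone. The only point that deserves a moment's care is ensuring the norm $\|\cdot\|$ and the functional $\psi$ on the left of \eqref{eq:less2} are the ones attached to $\Phi$ (via $\Phi(u)=\frac12\|u\|^2-\frac1p\psi(u)$), while $\|\cdot\|_0$ and $\psi_0$ are those attached to $\Phi_0$; this is consistent with the notation fixed in the definition of class $\mathcal{S}_p$, where each functional carries its own (equivalent) norm, so no compatibility issue arises.
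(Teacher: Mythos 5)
Your proof is correct and follows essentially the same route as the paper: both express $c_0=\Phi_0(w_0)=\max_{t>0}\Phi_0(tw_0)=(\tfrac12-\tfrac1p)\bigl(\|w_0\|_0^p/\psi_0(w_0)\bigr)^{2/(p-2)}$ and then observe that \eqref{eq:less} is equivalent to \eqref{eq:less2}. You merely spell out the monotonicity of $s\mapsto s^{2/(p-2)}$ for $p>2$, which the paper leaves implicit.
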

\begin{proof}
Since $\Phi_0\in\mathcal{S}_p$, we have the representation  $\Phi_0=\frac{1}{2}\|\cdot\|_0^2-\frac{1}{p}\psi_0$ which, by repeating calculations in the proof of Lemma~\ref{lem:less} allows to represent the ground state value of $\Phi_0$ as
$$c_0=\max_t\Phi_0(tw_0)=(\frac{1}{2}-\frac{1}{p})\left(\frac{\|w_0\|_0^p}{\psi_0(w_0)}\right)^\frac{2}{p-2}.$$
Thus \eqref{eq:less} can be written in the equivalent form as  \eqref{eq:less2}.
\end{proof}

\section{Ground state solutions}

Now we give several sufficient conditions to have \eqref{eq:less2}. We will always assume that $0<\kappa_0<1$, so that the functional $\Psi_0$ has a positive ground state, which we denote as $(u,v)$, by Lemma \ref{lem-3.1}.
\begin{theorem}\label{zh0127}
 The functional \eqref{b-1} has a ground state if one of the following conditions \eqref{01}-\eqref{04}  holds.
\begin{equation}\label{01}
\left(\frac{\|(u,v)\|^2_0-\int\kappa(x)uv}{\|z\|^2_0}\right)^{p/2}<\frac{\psi(u,v)}{\psi_0(u,v)};
\end{equation}
\begin{equation} \label{02}
\kappa(x)\ge 0, a(x)\ge 0, b(x)\ge 0, \beta(x)\ge 0,
\end{equation}
provided that at least one of the inequalities is strict on a set of positive measure;

For the case $u=v$ (see sufficient conditions in Section 3),
\begin{equation} \label{03}
(a(x)+b(x)+2\beta(x))u^{p-2}+p\kappa(x)\ge 0;
\end{equation}
or in particular if
\begin{equation}
\label{04}
\kappa(x)\ge 0 \text{ and } a(x)+b(x)+2\beta(x)\ge 0,
\end{equation}
provided that at least one of the inequalities is strict on a set of positive measure.
\end{theorem}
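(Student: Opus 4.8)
The plan is to derive the conclusion from the (unlabelled) lemma at the end of Section~5: if the mountain-pass value $c$ of $\Phi$ lies strictly below the mountain-pass value $c_0$ of its limit functional at infinity $\Phi_0$, then $\Phi$ has a ground state. Since $0<\kappa_0<1$, the functional $\Phi_0$ of \eqref{c-2} is the limit of $\Phi$ at infinity, assumption \eqref{coco} holds, and both functionals belong to $\mathcal{S}_p$; moreover Lemma~\ref{lem-3.1} supplies a \emph{positive} ground state $z=(u,v)\in\mathscr{N}_0$ of $\Phi_0$. By Corollary~\ref{cor:less} it then suffices to verify the Nehari-quotient inequality \eqref{eq:less2} at $w_0=z$, namely
\begin{equation*}
\frac{\|z\|^p}{\psi(z)}<\frac{\|z\|_0^p}{\psi_0(z)},
\end{equation*}
where $\|\cdot\|,\psi$ and $\|\cdot\|_0,\psi_0$ denote the norm and the $p$-homogeneous part in the $\mathcal{S}_p$-representations of $\Phi$ and $\Phi_0$. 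The two norms differ only through the quadratic term $\int_{\R^N}\kappa(x)uv$, the two homogeneous parts only through $\int_{\R^N}a(x)|u|^p+\int_{\R^N}b(x)|v|^p+2\int_{\R^N}\beta(x)|u|^{p/2}|v|^{p/2}$, and $\psi_0(z)=\|z\|_0^2$ because $z\in\mathscr{N}_0$; feeding these identities into \eqref{eq:less2} gives precisely the condition \eqref{01}, which settles that case.

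For \eqref{02} I would argue as follows. Since $z$ is a \emph{positive} ground state, the maximum principle gives $u>0$ and $v>0$ a.e.; then $\kappa\ge0$ makes $\int_{\R^N}\kappa uv\ge0$, so $\|z\|\le\|z\|_0$, while $a,b,\beta\ge0$ make $\psi(z)-\psi_0(z)\ge0$, so $\psi(z)\ge\psi_0(z)$. If any one of the four inequalities of \eqref{02} is strict on a set of positive measure, then (using $u,v>0$) the corresponding one of these two comparisons becomes strict, whence $\|z\|^p/\psi(z)<\|z\|_0^p/\psi_0(z)$; thus $c<c_0$ and the lemma of Section~5 yields a ground state.

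For the synchronized case I would first invoke the hypotheses collected in Section~3 (Lemmas~\ref{lem-3.2}--\ref{lem-3.3}), which guarantee that the ground state of $\Phi_0$ has equal components, $z=(u,u)$; then $\psi(z)-\psi_0(z)=\int_{\R^N}(a+b+2\beta)u^p$ and $\|z\|_0^2=\psi_0(z)$ as above. Substituting into \eqref{eq:less2} reduces it to a single estimate relating $\int_{\R^N}\kappa u^2$ to $\int_{\R^N}(a+b+2\beta)u^p$; testing the pointwise inequality \eqref{03} against $u^2$ produces the integrated relation $\int_{\R^N}(a+b+2\beta)u^p+p\int_{\R^N}\kappa u^2\ge0$, which is the input to that estimate, and \eqref{04} is the special case in which both $\kappa$ and $a+b+2\beta$ are nonnegative, disposed of exactly as in \eqref{02}. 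Strictness on a positive-measure set again upgrades the conclusion to $c<c_0$.

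The routine part of the argument is the bookkeeping that converts the abstract quantities $\|\cdot\|$ and $\psi$ into the concrete coefficient integrals above. I expect the main obstacle to be the synchronized case: one must check that the \emph{pointwise} inequality \eqref{03}, which by itself yields only the integrated relation recorded above, is genuinely strong enough to force the strict quotient inequality \eqref{eq:less2}. This is where the precise Nehari normalization of the synchronized profile $u$ (and, where needed, the additional structural hypotheses of Section~3) must be combined with an elementary convexity estimate for $t\mapsto(1-t)^{p/2}$, $p>2$; making this reduction airtight is the crux of the proof.
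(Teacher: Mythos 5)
Your proposal follows the paper's route exactly: reduce everything to the comparison $c<c_0$ via Corollary~\ref{cor:less} and the unlabelled existence lemma at the end of Section~5, taking as test element the positive ground state $z=(u,v)$ of $\Phi_0$ supplied by Lemma~\ref{lem-3.1}. Your treatment of \eqref{01}, \eqref{02} and \eqref{04} is correct and coincides with the paper's (whose entire proof consists of four sentences asserting these implications); note only that the exact restatement of \eqref{eq:less2} has $\|z\|^2=\|z\|_0^2-2\int\kappa(x)uv$, so the numerator in \eqref{01} should carry a factor $2$ --- a typo you inherit, harmless for \eqref{02} and \eqref{04} since there the signs alone decide.

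The obstacle you flag in case \eqref{03} is genuine, and the convexity estimate you propose will not close it, because convexity points the wrong way. Put $\alpha=\int\kappa u^2/\|z\|_0^2$ and $\gamma=\int(a+b+2\beta)u^p/\psi_0(z)$; using $\psi_0(z)=\|z\|_0^2$ on $\mathscr{N}_0$, inequality \eqref{eq:less2} for $z=(u,u)$ reads $(1-2\alpha)^{p/2}<1+\gamma$, whereas integrating \eqref{03} against $u^2$ only gives $\gamma\ge-p\alpha$. Since $t\mapsto(1-t)^{p/2}$ is convex for $p>2$, one has $(1-2\alpha)^{p/2}\ge 1-p\alpha$, with strict inequality of order $\alpha^2$ when $\alpha\ne0$; hence a perturbation with $\gamma=-p\alpha+\varepsilon$ and $0<\varepsilon\ll\alpha^2$ is compatible with \eqref{03} but violates $(1-2\alpha)^{p/2}<1+\gamma$. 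In other words, \eqref{03} is only the linearization of \eqref{01} at $(a,b,\beta,\kappa)=(0,0,0,0)$ and is sufficient merely to first order in the perturbation. The paper glosses over exactly this point with the sentence ``Condition \eqref{03} is \eqref{01} restated for $u=v$,'' so you have located a soft spot in the published argument rather than a defect peculiar to your write-up; to make case \eqref{03} rigorous one must either replace it by the exact quotient condition $(1-2\alpha)^{p/2}<1+\gamma$, which \eqref{04} does imply (there $\alpha,\gamma\ge0$ with one strict), or add a smallness hypothesis on $(a,b,\beta,\kappa)$ that absorbs the second-order error.
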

\begin{proof}
Condition \eqref{01} is a restatement of \eqref{eq:less2}. Condition \eqref{02} obviously implies \eqref{01}. Condition \eqref{03} is \eqref{01} restated for $u=v=w$, and condition \eqref{04} trivially implies \eqref{03}.
\end{proof}

\begin{proof}[Proof of Theorem \ref{th1.1}]
From the results of Theorem \ref{zh0127}, we know that Theorem
\ref{th1.1} hold.
\end{proof}

Finally we give another proof for Lemma \ref{lem-3.1}.

\begin{proof}[Proof of Lemma \ref{lem-3.1}]
Let $w_k$ be the subsequence given by Theorem~\ref{profdec} of a
critical sequence for $\Phi_0$. Assume that the series \eqref{pd}
contains at least two nonzero terms. Then, using
Proposition~\ref{prop:energy} we have
\begin{equation}
\begin{array}{lll}
 c_0&=&\Phi_0(w_k)+o(1)=(\frac{1}{2}-\frac{1}{p})\psi_0(w_k)+o(1)\\
& \ge &(\frac{1}{2}-\frac{1}{p})(\psi_0(U^{(1)},V^{(1)})+\psi_0(U^{(2)},V^{(2)}))+o(1)\\
&=& \Phi_0(U^{(1)},V^{(1)})+\Phi_0(U^{(2)},V^{(2)})+o(1).
\end{array}
\end{equation}

At the same time, considering the functional $\Phi_0$ on the path
$t\mapsto t(U^{(1)},V^{(1)})$, we have $c_0\le \Phi_0(U^{(1)},V^{(1)})$, which is a contradiction. We conclude therefore that the critical sequence
$\tilde w_k=w_k(\cdot+y_k^{(1)})$ is convergent in $L^p$ to $(U^{(1)},V^{(1)})$, from which one can easily conclude that $(U^{(1)},V^{(1)})$ is a ground state.
\end{proof}

\section{Bound state solutions}

Throughout this section we fix $0 < \kappa_0 < 1$, and assume that $\kappa(x) \le 0$, $a(x) \le 0$, $b(x) \le 0$ and $\beta(x) \le 0$, with at least one of the inequalities strict on a set of positive measure.

\begin{lemma} \label{Lemma 7.1}
$c = c_0$ and $c$ is not attained.
\end{lemma}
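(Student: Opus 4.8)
The plan is to establish $c=c_0$ by a two-sided estimate and then to turn the hypothetical attainment of $c$ into an equality that contradicts the strict sign assumption. Write $\Phi=\frac{1}{2}\|\cdot\|_\kappa^2-\frac{1}{p}\psi$ and $\Phi_0=\frac{1}{2}\|\cdot\|_{\kappa_0}^2-\frac{1}{p}\psi_0$, where
\[
\|(u,v)\|_\kappa^2=\|u\|^2+\|v\|^2-2\int(\kappa_0+\kappa)uv,
\]
\[
\psi(u,v)=\int\big[(a_0+a)|u|^p+(b_0+b)|v|^p\big]+2\int(\beta_0+\beta)|u|^{p/2}|v|^{p/2},
\]
and $\|\cdot\|_{\kappa_0}^2,\psi_0$ are obtained by setting $a=b=\beta=\kappa=0$. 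Since the functional \eqref{c-2} is the limit at infinity of \eqref{b-1} when $0<\kappa_0<1$, we already have $c\le c_0$ (equivalently: evaluate $\Phi$ on the Nehari projections of the far-away translates of the positive ground state of $\Phi_0$ furnished by Lemma~\ref{lem-3.1}, and let the translation tend to infinity). For the reverse inequality the key elementary observation is that, for every $u,v\ge 0$, $\|(u,v)\|_{\kappa_0}^2\le\|(u,v)\|_\kappa^2$ and $\psi_0(u,v)\ge\psi(u,v)$: the first because $\|(u,v)\|_\kappa^2-\|(u,v)\|_{\kappa_0}^2=-2\int\kappa\,uv\ge 0$ (as $\kappa\le 0$, $uv\ge 0$), the second because $\psi_0-\psi=\int(-a)|u|^p+\int(-b)|v|^p+2\int(-\beta)|u|^{p/2}|v|^{p/2}\ge 0$ (the latter for all $(u,v)$).

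First I would pass to a nonnegative minimizing sequence. If $z_k=(u_k,v_k)\in\mathscr{N}$ with $\Phi(z_k)\to c$, then $(|u_k|,|v_k|)$ has the same norms $\|u_k\|,\|v_k\|$ and the same $\psi$-value, but a no-larger $\|\cdot\|_\kappa^2$ (since $|u_k||v_k|\ge u_kv_k$ and $\kappa_0+\kappa>0$); hence if $t_k>0$ satisfies $t_k(|u_k|,|v_k|)\in\mathscr{N}$, then $t_k^{p-2}=\|(|u_k|,|v_k|)\|_\kappa^2/\psi(u_k,v_k)\le\|z_k\|_\kappa^2/\psi(z_k)=1$ and $\Phi(t_k(|u_k|,|v_k|))=(\frac{1}{2}-\frac{1}{p})t_k^2\|(|u_k|,|v_k|)\|_\kappa^2\le(\frac{1}{2}-\frac{1}{p})\|z_k\|_\kappa^2=\Phi(z_k)$. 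After renaming we may assume $u_k,v_k\ge 0$. Now choose $s_k>0$ with $s_kz_k\in\mathscr{N}_0$; using $\|z_k\|_\kappa^2=\psi(z_k)$ (valid on $\mathscr{N}$) together with the two inequalities above, $s_k^{p-2}=\|z_k\|_{\kappa_0}^2/\psi_0(z_k)\le\|z_k\|_\kappa^2/\psi(z_k)=1$, whence
\[
c_0\le\Phi_0(s_kz_k)=\left(\frac{1}{2}-\frac{1}{p}\right)s_k^2\|z_k\|_{\kappa_0}^2\le\left(\frac{1}{2}-\frac{1}{p}\right)\|z_k\|_{\kappa_0}^2\le\left(\frac{1}{2}-\frac{1}{p}\right)\|z_k\|_\kappa^2=\Phi(z_k)\to c.
\]
Thus $c_0\le c$, and therefore $c=c_0$.

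For non-attainment, suppose $\Phi(z)=c$ for some $z\in\mathscr{N}$. Applying the same absolute-value-and-projection step to $z$ yields $\bar z=(\bar u,\bar v)\in\mathscr{N}$ with $\bar u,\bar v\ge 0$ and $\Phi(\bar z)\le\Phi(z)=c$, hence $\Phi(\bar z)=c$; by Lemma~\ref{lem-2.1} $\bar z$ solves \eqref{auto}, and by the strong maximum principle (as in the final step of the proof of Lemma~\ref{lem-3.1}), together with the fact that $\kappa_0+\kappa>0$ excludes a semitrivial solution, one gets $\bar u,\bar v>0$ a.e. Re-running the chain of inequalities of the previous paragraph with $z_k$ replaced by $\bar z$, its two ends both equal $c=c_0$, so every inequality there is an equality. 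Equality in $\|\bar z\|_{\kappa_0}^2\le\|\bar z\|_\kappa^2$ forces $\int\kappa\,\bar u\bar v=0$, and equality in $\psi_0(\bar z)\ge\psi(\bar z)$ (which follows because $s=1$ and $\|\bar z\|_{\kappa_0}^2=\|\bar z\|_\kappa^2=\psi(\bar z)$) forces $\int(-a)\bar u^p=\int(-b)\bar v^p=\int(-\beta)\bar u^{p/2}\bar v^{p/2}=0$. Since $\bar u,\bar v>0$ a.e. and $-a,-b,-\beta,-\kappa\ge 0$, this gives $a\equiv b\equiv\beta\equiv\kappa\equiv 0$, contradicting the standing assumption that at least one of $a,b,\beta,\kappa$ is strictly negative on a set of positive measure. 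Hence $c$ is not attained.

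The step I expect to be the main obstacle is the bookkeeping that lets the pointwise comparison ``$\Phi\ge\Phi_0$ on the positive cone'' be transferred to the two Nehari manifolds: one must check at once that replacing $(u,v)$ by $(|u|,|v|)$ leaves the $L^p$-type terms alone, can only raise $\int(\kappa_0+\kappa)uv$, and consequently keeps both Nehari scaling factors $\le 1$, so that neither projection increases the energy. The other delicate point is the maximum-principle step: it is the strict positivity $\bar u,\bar v>0$ a.e. --- not mere nonnegativity --- that converts $\int(-a)\bar u^p=0$ and its analogues into $a\equiv 0$ and its analogues.
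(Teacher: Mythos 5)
Your proof is correct and follows essentially the same route as the paper: $c\le c_0$ via far translates of the periodic ground state, $c_0\le c$ via the Nehari reprojection with scaling factor $\le 1$ coming from the sign conditions, and non-attainment by forcing equality throughout and invoking strict positivity. The only difference is that you make explicit the reduction to nonnegative competitors via $(|u|,|v|)$ and reprojection, a step the paper leaves implicit by restricting its chain of inequalities to pairs with $uv\ge 0$; this is a welcome clarification rather than a new idea.
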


\begin{proof}
The periodic system \eqref{c-1} has a positive ground state solution $(u,v) \in \mathscr{N}_0$ by Lemma \ref{lem-3.1}. Take a sequence $y_k \in \Z^N$ such that $|y_k| \to \infty$ and set
\[
(u_k,v_k) = (t_k\, u(\cdot + y_k),t_k\, v(\cdot + y_k)),
\]
where $t_k > 0$ is such that $(u_k,v_k) \in \mathscr{N}$, i.e.,
\begin{eqnarray*}
t_k^{p-2} & = & \frac{\|u(\cdot + y_k)\|^2 + \|v(\cdot + y_k)\|^2 - 2 \dint_{\R^N} (\kappa_0 + \kappa(x))\, u(x + y_k)\, v(x + y_k)\, dx}{\begin{split}
\dint_{\R^N} \big[(a_0(x) + a(x))\, u(x + y_k)^p + (b_0(x) + b(x))\, v(x + y_k)^p\\[-10pt]
+ 2\, (\beta_0 + \beta(x))\, u(x + y_k)^{p/2}\, v(x + y_k)^{p/2}\big] dx
\end{split}}\\[10pt]
& = & \frac{\|u\|^2 + \|v\|^2 - 2 \dint_{\R^N} (\kappa_0 + \kappa(x - y_k))\, uv\, dx}{\begin{split}
\dint_{\R^N} \big[(a_0(x) + a(x - y_k))\, u^p + (b_0(x) + b(x - y_k))\, v^p\\[-10pt]
+ 2\, (\beta_0 + \beta(x - y_k))\, u^{p/2}\, v^{p/2}\big] dx
\end{split}}\\[10pt]
& = & \frac{\dint_{\R^N} \big[a_0(x)\, u^p + b_0(x)\, v^p + 2 \beta_0\, u^{p/2}\, v^{p/2}\big] dx
- 2 \dint_{\R^N} \kappa(x - y_k)\, uv\, dx}{\begin{split}
\dint_{\R^N} \big[a_0(x)\, u^p + b_0(x)\, v^p + 2 \beta_0\, u^{p/2}\, v^{p/2}\big] dx
+ \dint_{\R^N} \big[a(x - y_k)\, u^p + b(x - y_k)\, v^p\\[-10pt]
+ 2 \beta(x - y_k)\, u^{p/2}\, v^{p/2}\big] dx
\end{split}}.
\end{eqnarray*}
Since $|y_k| \to \infty$, $t_k \to 1$ and hence $\Phi(u_k,v_k) \to c_0$, so $c \le c_0$. To see that the reverse inequality holds, for any $(u,v) \in \mathscr{N}$ such that $uv \ge 0$, let $t > 0$ be such that $(tu,tv) \in \mathscr{N}_0$, i.e.,
\begin{eqnarray} \label{7.1}
&~& t^{p-2}  =  \frac{\|u\|^2 + \|v\|^2 - 2 \kappa_0 \dint_{\R^N} uv\, dx}{\dint_{\R^N} \big[a_0(x)\, |u|^p + b_0(x)\, |v|^p + 2 \beta_0\, |u|^{p/2}\, |v|^{p/2}\big] dx} \notag\\[10pt]
& = & \frac{\begin{split}
\dint_{\R^N} \big[(a_0(x) + a(x))\, |u|^p + (b_0(x) + b(x))\, |v|^p + 2\, (\beta_0 + \beta(x))\, |u|^{p/2}\, |v|^{p/2}\big] dx\\[-10pt]
 + 2 \dint_{\R^N} \kappa(x)\, uv\, dx
\end{split}}{\dint_{\R^N} \big[a_0(x)\, |u|^p + b_0(x)\, |v|^p + 2 \beta_0\, |u|^{p/2}\, |v|^{p/2}\big] dx}.
\end{eqnarray}
Since $\kappa(x), a(x), b(x), \beta(x) \le 0$ and $uv \ge 0$, $t \le 1$ and hence
\begin{multline*}
c_0 \le \Phi_0(tu,tv) = t^2 \left(\frac{1}{2} - \frac{1}{p}\right) \left[\|u\|^2 + \|v\|^2 - 2 \kappa_0 \int_{\R^N} uv\, dx\right] \le \left(\frac{1}{2} - \frac{1}{p}\right) \bigg[\|u\|^2 + \|v\|^2\\[10pt]
- 2 \kappa_0 \int_{\R^N} uv\, dx\bigg]
\le \left(\frac{1}{2} - \frac{1}{p}\right) \left[\|u\|^2 + \|v\|^2 - 2 \int_{\R^N} (\kappa_0 + \kappa(x))\, uv\, dx\right] = \Phi(u,v)
\end{multline*}
by \eqref{b-5}, so $c_0 \le c$. If $\Phi(u,v) = c$, then equality holds throughout and hence $t = 1$ and $(u,v)$ is a ground state solution of system \eqref{c-1}, so an argument similar to that in the proof of Lemma \ref{lem-3.1} shows that $uv > 0$. Then \eqref{7.1} implies that $\kappa(x), a(x), b(x), \beta(x) \equiv 0$, which is contrary to assumptions.
\end{proof}

By Lemma \ref{Lemma 7.1}, system \eqref{auto} has no solutions at the level $c$, so we look for a solution at a higher energy level using the notion of barycenter as in \cite{ACR-1}. In view of Lemma \ref{lem-3.5}, we only consider the case where $p = 4$ and $a_0, b_0$ are positive constants, so our system is
\begin{equation} \label{7.2}
\begin{cases}
- \Delta u + u = (a_0 + a(x))\, u^3 + (\beta_0 + \beta(x))\, v^2 u + (\kappa_0 + \kappa(x))\, v \quad & \text{in } \R^N,\\
- \Delta v + v = (b_0 + b(x))\, v^3 + (\beta_0 + \beta(x))\, u^2 v + (\kappa_0 + \kappa(x))\, u \quad & \text{in } \R^N,
\end{cases}
\end{equation}
where $N \le 3$, $\beta_0 \in \R$, $a, b, \beta, \kappa \in L^\infty(\mathbb{R}^N)$ go to zero as $|x| \to \infty$, and
\begin{multline} \label{7.10}
a_0 + \inf_{x \in \R^N}\, a(x) > 0,~ b_0 + \inf_{x \in \R^N}\, b(x) > 0,
0 < \kappa_0 + \inf_{x \in \R^N}\, \kappa(x) \le \kappa_0 + \sup_{x \in \R^N}\, \kappa(x) < 1.
\end{multline}
The associated energy functional is
\[
\begin{split}
\Phi(u,v) & = \frac{1}{2} \left(\|u\|^2 + \|v\|^2\right) - \frac{1}{4} \int_{\R^N} \left[(a_0 + a(x))\, u^4 + (b_0 + b(x))\, v^4\right]\\
& \quad - \frac{1}{2} \int_{\R^N} (\beta_0 + \beta(x))\, u^2 v^2 - \int_{\R^N} (\kappa_0 + \kappa(x))\, uv, \quad (u,v) \in E.
\end{split}
\]

For $u \in H^1(\R^N)$, let
\[
\mu(u)(x) = \frac{1}{|B_1|} \int_{B_1(x)} |u(y)|\, dy
\]
and note that $\mu(u)$ is a bounded continuous function on $\R^N$. Then set
\[
\hat{u}(x) = \left[\mu(u)(x) - \frac{1}{2}\, \max \mu(u)\right]^+,
\]
so that $\hat{u} \in C_0(\R^N)$. The barycenter of a pair $(u,v) \in E \setminus \{(0,0)\}$ was defined in \cite{ACR-1} by
\[
\xi(u,v) = \frac{1}{|\hat{u}|_1 + |\hat{v}|_1} \int_{\R^N} x \left(\hat{u}(x) + \hat{v}(x)\right) dx
\]
(see also \cite{MR1989833}). Since $\hat{u}$ and $\hat{v}$ have compact supports, $\xi : E \setminus \{(0,0)\} \to \R^N$ is a well-defined continuous map. As noted in \cite{ACR-1}, it has the following properties:
\begin{enumerate}
\item If $u$ and $v$ are radial functions, then $\xi(u,v) = 0$.
\item For $t \ne 0$, $\xi(tu,tv) = \xi(u,v)$.
\item For all $y \in \R^N$, $\xi(u(\cdot + y),v(\cdot + y)) = \xi(u,v) - y$.
\end{enumerate}

Set
\[
\tilde{c} = \inf_{(u,v) \in \mathscr{N}\!,\, \xi(u,v) = 0}\, \Phi(u,v).
\]
Clearly, $\tilde{c} \ge c$. As in \cite{ACR-1}, we have the following lemma.

\begin{lemma} \label{Lemma 7.2}
$\tilde{c} > c$.
\end{lemma}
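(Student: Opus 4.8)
The plan is to argue by contradiction: suppose $\tilde c = c$, so that the infimum of $\Phi$ over the set $\{(u,v)\in\mathscr N : \xi(u,v)=0\}$ equals $c = c_0$. Then there is a minimizing sequence $(u_k,v_k)\in\mathscr N$ with $\xi(u_k,v_k)=0$ and $\Phi(u_k,v_k)\to c_0$. By the standard Ekeland-on-Nehari argument (as in Lemma \ref{lem-3.1}), we may assume in addition that $\Phi'(u_k,v_k)\to 0$, so that $(u_k,v_k)$ is a Palais--Smale sequence for $\Phi$ at level $c_0$; it is bounded by the coercivity estimate \eqref{b-5}. Now apply the profile decomposition Theorem \ref{profdec} together with Proposition \ref{prop:energy}: we obtain profiles $(U^{(n)},V^{(n)})$ with $(U^{(1)},V^{(1)})$ a critical point of $\Phi$, each $(U^{(n)},V^{(n)})$ for $n\ge 2$ a critical point of $\Phi_0$, and
\[
\Phi(U^{(1)},V^{(1)}) + \sum_{n\ge 2}\Phi_0(U^{(n)},V^{(n)}) = c_0.
\]

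The key dichotomy is whether $(U^{(1)},V^{(1)})$ vanishes. First I would rule out $(U^{(1)},V^{(1)})\ne(0,0)$: in that case it is a nontrivial critical point of $\Phi$, hence lies on $\mathscr N$, so $\Phi(U^{(1)},V^{(1)})\ge c$. Since each $\Phi_0(U^{(n)},V^{(n)})\ge c_0 = c$ for every nonzero profile $n\ge 2$ (by Lemma \ref{lem-2.1} adapted to $\Phi_0$ and the definition of $c_0$), the energy identity forces all the $n\ge 2$ profiles to be zero and $\Phi(U^{(1)},V^{(1)}) = c$. But then $(U^{(1)},V^{(1)})$ would be a ground state solution of \eqref{auto} at level $c$, contradicting Lemma \ref{Lemma 7.1}, which says $c$ is not attained. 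So $(U^{(1)},V^{(1)}) = (0,0)$, i.e., $(u_k,v_k)\rightharpoonup 0$. Then, since $c_0 > 0$, the series cannot be empty and must contain at least one nonzero $\Phi_0$-profile; moreover each such profile has energy $\ge c_0$, so by the energy identity there is exactly one nonzero profile $(U^{(2)},V^{(2)})$ with $\Phi_0(U^{(2)},V^{(2)}) = c_0$, hence a ground state of \eqref{c-1}, and $u_k(\cdot+y_k^{(2)})\to U^{(2)}$, $v_k(\cdot+y_k^{(2)})\to V^{(2)}$ in $L^p$ (indeed in $E$ by the argument in Proposition \ref{prop:energy}), with $|y_k^{(2)}|\to\infty$.

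The final contradiction comes from the barycenter. The map $\xi$ is continuous on $E\setminus\{(0,0)\}$ and satisfies the translation property $\xi(w(\cdot+y)) = \xi(w) - y$. Since $(u_k,v_k)$ converges strongly, after translation by $y_k^{(2)}$, to the nonzero limit $(U^{(2)},V^{(2)})$, continuity of $\xi$ gives
\[
\xi(u_k,v_k) = \xi\big(u_k(\cdot+y_k^{(2)}),v_k(\cdot+y_k^{(2)})\big) + y_k^{(2)} \longrightarrow \xi(U^{(2)},V^{(2)}) + \lim y_k^{(2)}.
\]
But $\xi(u_k,v_k) = 0$ for all $k$, whereas $|y_k^{(2)}|\to\infty$ forces the right-hand side to have norm tending to infinity — a contradiction. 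Hence $\tilde c > c$.

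The main obstacle I anticipate is the bookkeeping around the barycenter when the limit profile could have a vanishing component (Theorem \ref{profdec} explicitly warns that one of $U^{(n)},V^{(n)}$ may be zero): one needs $\hat u_k + \hat v_k$ to not degenerate, so that $\xi$ stays well-defined along the sequence and its continuity can be invoked. This requires noting that $L^p$-convergence to a nonzero $(U^{(2)},V^{(2)})$ (with at least one nonzero component) gives a uniform lower bound on $|\hat u_k|_1 + |\hat v_k|_1$ after the translation, which suffices. A secondary technical point is justifying that the PS sequence can be taken to also satisfy the barycenter constraint up to an error; this is handled by the standard observation that on the constrained Nehari set the barycenter constraint is stable under the Ekeland perturbation because $\xi$ is continuous and the constraint set $\{\xi = 0\}\cap\mathscr N$ is closed — exactly as carried out in \cite{ACR-1}.
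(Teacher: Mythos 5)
Your proposal is correct and follows essentially the same route as the paper: contradiction via Ekeland's principle to get a Palais--Smale sequence at level $c=c_0$, the profile decomposition of Theorem \ref{profdec} with the energy identity of Proposition \ref{prop:energy}, elimination of a nontrivial first profile by Lemma \ref{Lemma 7.1}, reduction to a single translated ground-state profile of the limit system, and the contradiction $\xi(u_k(\cdot+y_k^{(n_0)}),v_k(\cdot+y_k^{(n_0)}))=-y_k^{(n_0)}\to\infty$ against the continuity of the barycenter. The technical caveats you flag (nondegeneracy of $|\hat u_k|_1+|\hat v_k|_1$ after translation, stability of the constraint under the Ekeland perturbation) are handled the same way in the paper, so no changes are needed.
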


\begin{proof}
Suppose $\tilde{c} = c$. Then there exists a sequence $(u_k,v_k) \in \mathscr{N}$ such that $\xi(u_k,v_k) = 0$ and $\Phi(u_k,v_k) \to c$. By Ekeland's variational principle (see \cite{MR0346619}), there exists another sequence $(\tilde{u}_k,\tilde{v}_k) \in \mathscr{N}$ such that
\begin{enumerate}
\item $\Phi(\tilde{u}_k,\tilde{v}_k) \to c$,
\item $\Phi|_{\mathscr{N}}'(\tilde{u}_k,\tilde{v}_k) \to 0$,
\item $\|(\tilde{u}_k,\tilde{v}_k) - (u_k,v_k)\| \to 0$.
\end{enumerate}
As in the proof of Lemma \ref{lem-3.1}, $\Phi'(\tilde{u}_k,\tilde{v}_k) \to 0$. This together with the mean value theorem and (3) above imply that
\begin{equation} \label{7.3}
\Phi'(u_k,v_k) \to 0
\end{equation}
since $\Phi''$ maps bounded sets onto bounded sets.

Since $\kappa_0 + \sup \kappa(x) < 1$, it follows from \eqref{b-5} that $(u_k, v_k)$ is bounded. We pass to the renamed subsequence provided by Theorem \ref{profdec}, and note that
\begin{equation} \label{7.4}
(u_k,v_k) - \sum_n\, (U^{(n)}(\cdot - y_k^{(n)}),V^{(n)}(\cdot - y_k^{(n)})) \to 0 \text{ in } E
\end{equation}
by \eqref{pd}, \eqref{7.3}, and the continuity of the Sobolev imbedding. By Proposition \ref{prop:energy},
\[
\Phi(U^{(1)},V^{(1)}) + \sum_{n \ge 2}\, \Phi_0(U^{(n)},V^{(n)}) = c,
\]
$(U^{(1)},V^{(1)}) \in \mathscr{N} \cup \{(0,0)\}$, and $(U^{(n)},V^{(n)}) \in \mathscr{N}_0 \cup \{(0,0)\}$ for $n \ge 2$. In view of Lemma \ref{Lemma 7.1}, then $(U^{(1)},V^{(1)})$ trivial and $(U^{(n)},V^{(n)})$ is nontrivial for at most one $n \ge 2$. Since $\mathscr{N}$ is bounded away from the origin, then \eqref{7.4} implies that $(U^{(n)},V^{(n)})$ is nontrivial for exactly one $n \ge 2$, say, for $n_0$, which then is a ground state solution of system \eqref{c-1}. Then $(u_k,v_k) - (U^{(n_0)}(\cdot - y_k^{(n_0)}),V^{(n_0)}(\cdot - y_k^{(n_0)})) \to 0$ and hence $(u_k(\cdot + y_k^{(n_0)}),v_k(\cdot + y_k^{(n_0)})) \to (U^{(n_0)},V^{(n_0)})$ after a translation, so
\begin{equation} \label{7.5}
\xi(u_k(\cdot + y_k^{(n_0)}),v_k(\cdot + y_k^{(n_0)})) \to \xi(U^{(n_0)},V^{(n_0)})
\end{equation}
by the continuity of the barycenter. However,
\[
\xi(u_k(\cdot + y_k^{(n_0)}),v_k(\cdot + y_k^{(n_0)})) = \xi(u_k,v_k) - y_k^{(n_0)} =  - y_k^{(n_0)}
\]
and $|y_k^{(n_0)}| \to \infty$, contradicting \eqref{7.5}.
\end{proof}

Let $(u,v)$ be a radially symmetric positive ground state solution of system \eqref{c-1} and consider the continuous map
\[
\Gamma : \R^N \to \mathscr{N}, \quad \Gamma(y) = (t_y\, u(\cdot - y),t_y\, v(\cdot - y)),
\]
where $t_y > 0$ is such that $\Gamma(y) \in \mathscr{N}$. We have
\begin{equation} \label{7.11}
\xi(\Gamma(y)) = \xi(u,v) + y = y
\end{equation}
since $u$ and $v$ are radial functions. Moreover, as in the proof of Lemma \ref{Lemma 7.1},
\begin{equation} \label{7.6}
t_y^2 = \frac{\|u\|^2 + \|v\|^2 - 2 \dint_{\R^N} (\kappa_0 + \kappa(x + y))\, uv\, dx}{\dint_{\R^N} \left[(a_0 + a(x + y))\, u^4 + (b_0 + b(x + y))\, v^4 + 2\, (\beta_0 + \beta(x + y))\, u^2 v^2\right] dx}
\end{equation}
and
\begin{equation} \label{7.8}
\Phi(\Gamma(y)) \to c_0 = c \quad \text{as } |y| \to \infty.
\end{equation}

\begin{lemma} \label{Lemma 7.3}
Assume that $0<\kappa_0<1$ and one of the following conditions holds:
\begin{itemize}
  \item [(1)] $\beta_0\geq3$;
  \item [(2)] $1\leq\beta_0\leq3$ and $w(0)\leq
\sqrt{\frac{2\kappa_0(1+\beta_0)}{(3-\beta_0)(1-\kappa_0)}}$, where
$w$ is the unique positive solution of the scalar equation \eqref{c-18};
  \item [(3)] $-1<\beta_0<1$, $w(0)\leq
\sqrt{\frac{2\kappa_0(1+\beta_0)}{(3-\beta_0)(1-\kappa_0)}}$, and
\begin{itemize}
\item if $N=1$, then $\kappa_0$ or $\beta_0-1$ is sufficiently small,
\item if $N=2,3$, then $\beta_0, \kappa_0>0$ are sufficiently small, or $|\beta_0|$ is sufficiently small and $\kappa_0$ is close to 1.
\end{itemize}
\end{itemize}
Then $c_0$ is an isolated critical level of $\Phi_0$.
\end{lemma}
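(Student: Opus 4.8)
The plan is to combine the profile decomposition of Theorem~\ref{profdec}, in the form of Proposition~\ref{prop:energy} applied to $\Phi_0$ itself (which is legitimate since here $\Phi_0$ has constant coefficients, so it is its own limit at infinity), with the nondegeneracy of $z_1$ coming from Lemma~\ref{lem-3.5}. First I would reduce the statement: since $0<\kappa_0<1$, every nonzero critical point of $\Phi_0$ lies on $\mathscr{N}_0$, hence has energy $\ge c_0>0$, and $(0,0)$ is the only critical point below $c_0$; so it suffices to produce an $\varepsilon>0$ with no critical values of $\Phi_0$ in $(c_0,c_0+\varepsilon)$. Next I would pin down the critical points at the exact level $c_0$: any such point is a ground state, so --- running the $(|u|,|v|)$ comparison from the proof of Lemma~\ref{lem-3.1} and then the maximum principle --- it is a positive or negative solution of \eqref{c-1}, radially symmetric up to translation (by the moving plane method when $\beta_0>0$, and by the radial-symmetry hypotheses built into condition~(3) when $\beta_0\le0$), hence a translate of $z_1$ or of $z_2=-z_1$ by Lemmas~\ref{lem-3.2}--\ref{lem-3.3}.

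Before the main argument I would upgrade Lemma~\ref{lem-3.5} to a full nondegeneracy-modulo-translations statement in $E$. The point is that under each of (1)--(3) the computation in the proof of Lemma~\ref{lem-3.5} gives that the potential $\frac{3-\beta_0}{1+\beta_0}w^2-\frac{2\kappa_0}{1-\kappa_0}$ of the second scalar equation in \eqref{c-45} is $\le0$ everywhere, so that operator is positive definite on all of $H^1(\R^N)$ (not just on radial functions), while the first operator $-\Delta+1-3w^2$ has kernel exactly $\operatorname{span}\{\partial_{x_j}w\}$. Undoing the orthogonal transformation and the scaling then shows that $z_1$ (and $z_2$, by Remark~\ref{rem-3.7}) is nondegenerate in $E$ modulo translations, i.e.\ $\ker\Phi_0''(z_1)=\operatorname{span}\{\partial_{x_j}z_1:1\le j\le N\}$, with trivial translation stabilizer since $z_1$ is a nontrivial radial function.

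For the isolation itself I would argue by contradiction: suppose $\Phi_0$ has critical points $(u_k,v_k)$ with $\Phi_0(u_k,v_k)=c_k>c_0$ and $c_k\to c_0$. Since $\Phi_0(u_k,v_k)=\big(\frac12-\frac1p\big)\|(u_k,v_k)\|_{\kappa_0}^2$, the sequence is bounded in $E$, so Theorem~\ref{profdec} and Proposition~\ref{prop:energy} (applied with $\Phi=\Phi_0$) give profiles $(U^{(n)},V^{(n)})$, each a critical point of $\Phi_0$, with $\sum_n\Phi_0(U^{(n)},V^{(n)})=c_0$ and with the remainder $(u_k,v_k)-\sum_n(U^{(n)}(\cdot-y_k^{(n)}),V^{(n)}(\cdot-y_k^{(n)}))\to0$ in $E$. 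Every nonzero profile contributes energy $\ge c_0>0$, so exactly one profile, say $z_*:=(U^{(n_0)},V^{(n_0)})$, is nonzero, it is a ground state, and $(u_k(\cdot+y_k^{(n_0)}),v_k(\cdot+y_k^{(n_0)}))\to z_*$ in $E$; after one further translation $z_*\in\{z_1,z_2\}$. Now I would invoke the implicit function theorem, i.e.\ a Lyapunov--Schmidt reduction near the nondegenerate-modulo-translations critical point $z_*$, using the previous paragraph: there is a neighborhood $\mathcal U$ of $z_*$ in $E$ whose only critical points of $\Phi_0$ are the translates $z_*(\cdot-y)$, all of which have energy $c_0$. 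For $k$ large the translated $(u_k,v_k)$ lies in $\mathcal U$ and is a critical point, so $c_k=\Phi_0(u_k,v_k)=c_0$, contradicting $c_k>c_0$; hence $c_0$ is isolated.

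The hard part, I expect, is the nondegeneracy bookkeeping in the second and third steps: turning Lemma~\ref{lem-3.5} (stated in the radial class) into a clean modulo-translations statement on all of $E$, and then running the Lyapunov--Schmidt reduction while keeping careful track of the $N$-parameter translation invariance. A secondary delicate point is the first step: for $\beta_0<0$ the radial symmetry of an arbitrary ground state is not automatic, and it must be extracted from the special smallness hypotheses hidden in condition~(3).
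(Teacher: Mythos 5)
Your proposal is correct and follows essentially the same route as the paper: an indirect argument in which a sequence of critical points at levels $c_k\downarrow c_0$, $c_k>c_0$, is shown via the profile decomposition of Theorem~\ref{profdec} and Proposition~\ref{prop:energy} to converge strongly (after translation) to a ground state, which is then identified as $\pm z_1$ by positivity and the uniqueness Lemma~\ref{lem-3.2}, and finally ruled out by the nondegeneracy of $z_1$, $z_2$ from Lemma~\ref{lem-3.5} and Remark~\ref{rem-3.7}. The only difference is that you carry out explicitly (the upgrade of nondegeneracy from the radial class to nondegeneracy modulo translations in $E$, and the Lyapunov--Schmidt step) what the paper delegates to the citations of \cite[Lemmas 3.5 and 3.7]{ACR-1}, and you correctly flag the same delicate points (radial symmetry for $\beta_0\le 0$) that the paper's own argument leaves implicit.
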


\begin{proof}
We use an indirect argument. Suppose that there exists a sequence $(u_n,v_n)\in
E$ such that ($i$) $\Phi'_0(u_n,v_n)=0$; $(ii)$
$\Phi_0(u_n,v_n)>c_0$; ($iii$) $\Phi_0(u_n,v_n)\rightarrow c_0$. As
in Proposition \ref{prop:energy}, there exists
$y_n\in\mathbb{R}^{N}$ such that
$(\tilde{u}_n(x),\tilde{v}_n(x))=(u_n(x-y_n),v_n(x-y_n))$ converges
to some $(u,v) \in E$ strongly. Moreover, $\Phi'_0(u,v)=0$ and
$\Phi_0(u,v)=c_0$, i.e., $z=(u,v)$ is a ground state solution of \eqref{auto-2}. As in \cite[Lemma 3.5]{ACR-1}, then $u,v>0$ or
$u,v<0$. By Lemma \ref{lem-3.2}, $z_1=(\sqrt{\frac{1-\kappa_0}{1+\beta_0}}w(\sqrt{1-\kappa_0}x),\sqrt{\frac{1-\kappa_0}{1+\beta_0}}w(\sqrt{1-\kappa_0}x))$
is the unique positive solution. From this and the structure of the
system \eqref{auto-2} we infer that the solution of \eqref{auto-2} with both components negative is also unique. Hence
$z_2=(-\sqrt{\frac{1-\kappa_0}{1+\beta_0}}w(\sqrt{1-\kappa_0}x),-\sqrt{\frac{1-\kappa_0}{1+\beta_0}}w(\sqrt{1-\kappa_0}x))$
is the unique negative solution of \eqref{auto-2}. In conclusion,
we have $z=z_1$ or $z=z_2$. As in Lemma \ref{lem-3.5}
and Remark \ref{rem-3.7}, $z_1$ and $z_2$ are
nondegenerate solutions of \eqref{auto-2}. Thus, $\Phi_0(u_n,v_n)=c_0$ by \cite[Lemma
3.7]{ACR-1}, contradicting $(ii)$.
\end{proof}

Henceforth we assume the hypotheses of Lemma \ref{Lemma 7.3}, so that $c_0$ is an isolated critical level of $\Phi_0$. Let $d_0 = \inf\, \{d > c_0 : d \text{ is a critical level of } \Phi_0\}$ and set $\tilde{c}_0 = \min\, \{d_0, 2c_0\}$. Then $\tilde{c}_0 > c_0$ and we have the following lemma.

\begin{lemma} \label{Lemma 7.4}
$\Phi$ satisfies the {\em (PS)}$_d$ condition for all $d \in (c_0,\tilde{c}_0)$.
\end{lemma}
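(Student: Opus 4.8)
The plan is to argue by contradiction using the profile decomposition of Theorem~\ref{profdec} together with the energy-splitting identity of Proposition~\ref{prop:energy}. Suppose $(u_k,v_k)$ is a Palais--Smale sequence for $\Phi$ at level $d\in(c_0,\tilde c_0)$, i.e. $\Phi(u_k,v_k)\to d$ and $\Phi'(u_k,v_k)\to 0$. First I would show $(u_k,v_k)$ is bounded in $E$: this follows exactly as in the abstract setting of Lemma~\ref{lem:nonzero}, by pairing $\Phi'(u_k,v_k)$ with $(u_k,v_k)/\|(u_k,v_k)\|_{\kappa_0}$ and using that $\psi$ is homogeneous of degree $p>2$, so $\Phi(u_k,v_k)=(\tfrac12-\tfrac1p)\|(u_k,v_k)\|_{\kappa_0}^2+o(1)$. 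Then I pass to the renamed subsequence furnished by Theorem~\ref{profdec}, obtaining profiles $(U^{(n)},V^{(n)})$ with weak limits as in \eqref{wlim}. By Proposition~\ref{prop:energy}, $(U^{(1)},V^{(1)})$ is a critical point of $\Phi$, each $(U^{(n)},V^{(n)})$ with $n\ge 2$ is a critical point of $\Phi_0$, the remainder $(\rho_k,\tau_k)\to 0$ in $E$ (using that $\Phi'(u_k,v_k)\to 0$ and the standard continuity argument in the proof of Proposition~\ref{prop:energy}), and the energy identity \eqref{eq:energy2} holds:
\[
\Phi(U^{(1)},V^{(1)})+\sum_{n\ge 2}\Phi_0(U^{(n)},V^{(n)})=d.
\]

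Next I would run a counting argument on the nontrivial profiles. Each nontrivial $(U^{(n)},V^{(n)})$, $n\ge 2$, lies in $\mathscr N_0$, hence contributes $\Phi_0(U^{(n)},V^{(n)})\ge c_0$; and if $(U^{(1)},V^{(1)})$ is nontrivial it lies in $\mathscr N$ and contributes $\Phi(U^{(1)},V^{(1)})\ge c=c_0$ (using Lemma~\ref{Lemma 7.1}). Since $d<\tilde c_0\le 2c_0$, there can be at most one nontrivial profile in total. If there is exactly one nontrivial profile, then $(\rho_k,\tau_k)\to 0$ in $E$ forces $(u_k,v_k)$ to converge strongly, after the appropriate lattice translation, to that profile; the only point to check is that the relevant translation index is $n=1$ (no escape to infinity). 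Here I would use $d<\tilde c_0\le d_0$: if the single nontrivial profile were some $(U^{(n_0)},V^{(n_0)})$ with $n_0\ge 2$, it would be a critical point of $\Phi_0$ at level $d$, but $\Phi_0$ has no critical level in $(c_0,d_0)$ by the definition of $d_0$ and the isolatedness from Lemma~\ref{Lemma 7.3}; and $d=c_0$ is excluded since $d>c_0$. Hence $n_0=1$, so $(u_k,v_k)\to(U^{(1)},V^{(1)})$ strongly in $E$, which is the desired $(PS)_d$ conclusion. The remaining case --- all profiles trivial --- would give $d=0$ by \eqref{eq:energy2}, contradicting $d>c_0>0$.

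The step I expect to be the main obstacle is the dichotomy excluding a single nontrivial profile that lives at a translation escaping to infinity: one must be careful that the concentration parameters $y_k^{(n_0)}$ genuinely diverge (so that the $a,b,\beta,\kappa$-perturbations vanish in the limit and the profile solves the limit system \eqref{c-1} rather than \eqref{auto}), and then invoke Lemma~\ref{Lemma 7.3} to rule out its energy level lying strictly between $c_0$ and $d_0$. All other steps --- boundedness, strong convergence of the remainder, and the energy bookkeeping --- are direct applications of Proposition~\ref{prop:energy} and the estimates in \eqref{b-5}, and should be routine.
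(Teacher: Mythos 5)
Your proposal is correct and follows essentially the same route as the paper: boundedness via the Ambrosetti--Rabinowitz-type estimate, the profile decomposition of Theorem~\ref{profdec} with the energy identity of Proposition~\ref{prop:energy}, the counting argument ($d<2c_0$ gives at most one nontrivial profile, $d>0$ gives exactly one), exclusion of a profile at $n_0\ge 2$ because $\Phi_0$ has no critical level in $(c_0,d_0)$, and strong convergence as in Lemma~\ref{Lemma 7.2}. The "main obstacle" you flag is already built into the machinery, since Proposition~\ref{prop:energy} guarantees that every profile with $n\ge 2$ (for which $|y_k^{(n)}|\to\infty$ automatically, as $y_k^{(1)}=0$) is a critical point of the limit functional $\Phi_0$.
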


\begin{proof}
Let $d \in (c_0,\tilde{c}_0)$ and let $(u_k,v_k) \in E$ be a (PS)$_d$ sequence for $\Phi$, i.e., $\Phi(u_k,v_k) \to d$ and $\Phi'(u_k,v_k) \to 0$. Since
\[
\Phi(u_k,v_k) - \frac{1}{4}\, \Phi'(u_k,v_k)\, (u_k,v_k) = \frac{1}{4} \left(\|u_k\|^2 + \|v_k\|^2 - 2 \int_{\R^N} (\kappa_0 + \kappa(x))\, u_k v_k \right)
\]
and $\kappa_0 + \sup \kappa(x) < 1$, it follows as in \eqref{b-5} that $(u_k, v_k)$ is bounded. Passing to the renamed subsequence provided by Theorem \ref{profdec} and utilizing Proposition \ref{prop:energy},
\[
\Phi(U^{(1)},V^{(1)}) + \sum_{n \ge 2}\, \Phi_0(U^{(n)},V^{(n)}) = d,
\]
$(U^{(1)},V^{(1)}) \in \mathscr{N} \cup \{(0,0)\}$, and $(U^{(n)},V^{(n)}) \in \mathscr{N}_0 \cup \{(0,0)\}$ for $n \ge 2$. Since $c = c_0$ and $d < 2c_0$, then $(U^{(n)},V^{(n)})$ is nontrivial for at most one $n \ge 1$. Since $d > 0$, then $(U^{(n)},V^{(n)})$ is nontrivial for exactly one $n$, say, for $n_0$. Since $c_0 < d < d_0$ and $\Phi_0$ has no critical levels in this interval, $n_0 = 1$. Then $(u_k,v_k) \to (U^{(1)},V^{(1)})$ as in the proof of Lemma \ref{Lemma 7.2}.
\end{proof}

\begin{lemma} \label{Lemma 7.5}
If
\begin{equation} \label{7.7}
\frac{\left(1 + \frac{|\kappa|_\infty}{1 - \kappa_0}\right)^2}{1 - \max \left\{\frac{|a|_\infty}{a_0},\frac{|b|_\infty}{b_0},\frac{|\beta|_\infty}{\beta_0}\right\}} < \frac{\tilde{c}_0}{c_0},
\end{equation}
then $\Phi(\Gamma(y)) < \tilde{c}_0$ for all $y \in \R^N$.
\end{lemma}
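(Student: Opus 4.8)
The plan is to estimate $\Phi(\Gamma(y))$ from above uniformly in $y$, using the explicit formula \eqref{7.6} for $t_y$ together with the decomposition of $\Phi$ on the Nehari manifold. Recall that $(u,v)$ is a radially symmetric positive ground state of the limit system \eqref{c-1}, so $\Phi_0(u,v) = c_0$ and, by \eqref{b-5} applied to $\Phi_0$,
\[
c_0 = \left(\tfrac{1}{2} - \tfrac{1}{p}\right)\left[\|u\|^2 + \|v\|^2 - 2\kappa_0 \int_{\R^N} uv\right].
\]
Since $\Gamma(y) \in \mathscr{N}$, the same Nehari identity gives
\[
\Phi(\Gamma(y)) = t_y^2 \left(\tfrac{1}{2} - \tfrac{1}{p}\right)\left[\|u\|^2 + \|v\|^2 - 2\int_{\R^N}(\kappa_0 + \kappa(x+y))\, uv\right].
\]
So the whole estimate reduces to bounding the product of $t_y^2$ (given by \eqref{7.6}) and the bracketed quantity, and comparing with $c_0$.

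**First I would** bound the numerator of $t_y^2$ in \eqref{7.6}. Since $\kappa(x+y) \le 0$ (we are in the regime of Section 7) and $uv > 0$, we have $-2\int (\kappa_0 + \kappa(x+y))uv \le -2\kappa_0 \int uv + 2|\kappa|_\infty \int uv$; using $\int uv \le \frac{1}{2\kappa_0}(\|u\|^2+\|v\|^2)$... actually it is cleaner to write the numerator of $t_y^2$ as $\|u\|^2+\|v\|^2 - 2\kappa_0\int uv - 2\int \kappa(x+y)uv$ and bound $-2\int\kappa(x+y)uv \le \frac{2|\kappa|_\infty}{1-\kappa_0}\cdot(\text{something controlled by the }\kappa_0\text{-norm})$. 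The point is that $\|u\|^2+\|v\|^2-2\kappa_0\int uv = \frac{2p}{p-2}c_0$ and $\int uv \le \frac{1}{1-\kappa_0}\left(\|u\|^2+\|v\|^2-2\kappa_0\int uv\right)$ by a standard Cauchy-Schwarz/Young estimate consistent with \eqref{b-5}. Hence the numerator of $t_y^2$ is at most $\left(1 + \frac{|\kappa|_\infty}{1-\kappa_0}\right)$ times $\|u\|^2+\|v\|^2-2\kappa_0\int uv$. For the denominator of $t_y^2$, since $a(x+y) \ge -|a|_\infty \ge -\frac{|a|_\infty}{a_0}\cdot a_0$ and similarly for $b, \beta$, each integrand drops by at most the factor $1 - \max\{|a|_\infty/a_0, |b|_\infty/b_0, |\beta|_\infty/\beta_0\}$, so the denominator is at least that factor times $\int[a_0 u^4 + b_0 v^4 + 2\beta_0 u^2 v^2]$, which (by the Nehari identity for $(u,v)$ in the limit system) equals $\|u\|^2+\|v\|^2-2\kappa_0\int uv$.

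**Putting these together**, $t_y^2 \le \left(1 + \frac{|\kappa|_\infty}{1-\kappa_0}\right)\left(1 - \max\{|a|_\infty/a_0, |b|_\infty/b_0, |\beta|_\infty/\beta_0\}\right)^{-1}$. Similarly the bracketed factor in $\Phi(\Gamma(y))$ is at most $\left(1 + \frac{|\kappa|_\infty}{1-\kappa_0}\right)$ times $\|u\|^2+\|v\|^2-2\kappa_0\int uv = \frac{2p}{p-2}c_0$. Multiplying, and noting $p=4$ here so $\frac12-\frac1p = \frac14$,
\[
\Phi(\Gamma(y)) \le \frac{\left(1 + \frac{|\kappa|_\infty}{1-\kappa_0}\right)^2}{1 - \max\left\{\frac{|a|_\infty}{a_0}, \frac{|b|_\infty}{b_0}, \frac{|\beta|_\infty}{\beta_0}\right\}}\, c_0 < \tilde{c}_0
\]
by hypothesis \eqref{7.7}.

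**The main obstacle** I anticipate is getting the two Cauchy-Schwarz-type reductions — passing from $\int uv$ and from $-2\int\kappa(x+y)uv$ to multiples of the $\kappa_0$-weighted quantity $\|u\|^2+\|v\|^2-2\kappa_0\int uv$ — to come out with exactly the constant $\left(1+\frac{|\kappa|_\infty}{1-\kappa_0}\right)$ in each place rather than something weaker; one must use $2\int uv \le \frac{1}{1-\kappa_0}(\|u\|^2+\|v\|^2) - \frac{2\kappa_0}{1-\kappa_0}\int uv$, equivalently $2(1-\kappa_0)\int uv \le \|u\|^2+\|v\|^2 - 2\kappa_0\int uv$, which is just $\|u-v\|^2 \ge 0$ reorganized. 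Care is also needed that the inequalities $a(x+y)\ge -|a|_\infty$ etc. are used in the right direction in the denominator (a smaller denominator makes $t_y$ larger, which is the bound we want), and that $uv\ge 0$ is genuinely available — it is, since $(u,v)$ is a positive ground state. Once the constants are tracked honestly, the conclusion is immediate from \eqref{7.7}.
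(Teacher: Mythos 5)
Your proof is correct and follows essentially the same route as the paper: decompose $\Phi(\Gamma(y))$ via the Nehari identity, bound the numerator of $t_y^2$ by $\bigl(1+\tfrac{|\kappa|_\infty}{1-\kappa_0}\bigr)\cdot 4c_0$ using $\|u\|^2+\|v\|^2-2\int uv\ge 0$, bound the denominator below by $\bigl(1-\max\{\cdot\}\bigr)\cdot 4c_0$, and multiply. The paper reaches the same numerator bound through an explicit algebraic identity whose dropped remainder is exactly your inequality $2(1-\kappa_0)\int uv\le\|u\|^2+\|v\|^2-2\kappa_0\int uv$, so the two arguments coincide.
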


\begin{proof}
Since $\Gamma(y) \in \mathscr{N}$,
\[
\Phi(\Gamma(y)) = \frac{t_y^2}{4} \left[\|u\|^2 + \|v\|^2 - 2 \int_{\R^N} (\kappa_0 + \kappa(x + y))\, uv\, dx\right]
\]
by \eqref{b-5}. Since the numerator in \eqref{7.6} is less than or equal to
\begin{multline*}
\|u\|^2 + \|v\|^2 - 2\, (\kappa_0 - |\kappa|_\infty) \int_{\R^N} uv\, dx = \left(1 + \frac{|\kappa|_\infty}{1 - \kappa_0}\right) \left[\|u\|^2 + \|v\|^2 - 2 \kappa_0 \int_{\R^N} uv\, dx\right]\\[10pt]
- \frac{|\kappa|_\infty}{1 - \kappa_0} \left[\|u\|^2 + \|v\|^2 - 2 \int_{\R^N} uv\, dx\right] \le 4c_0 \left(1 + \frac{|\kappa|_\infty}{1 - \kappa_0}\right)
\end{multline*}
and the denominator is greater than or equal to
\begin{multline*}
\int_{\R^N} \left[(a_0 - |a|_\infty)\, u^4 + (b_0 - |b|_\infty)\, v^4 + 2\, (\beta_0 - |\beta|_\infty)\, u^2 v^2\right] dx\\[10pt]
\ge 4c_0 \left(1 - \max \left\{\frac{|a|_\infty}{a_0},\frac{|b|_\infty}{b_0},\frac{|\beta|_\infty}{\beta_0}\right\}\right),
\end{multline*}
the conclusion follows from \eqref{7.7}.
\end{proof}

The main result of this section is the following theorem.

\begin{theorem}\label{zhang}
Let $a_0, b_0 > 0$, $0 < \kappa_0 < 1$, and assume that \eqref{7.10} and the hypotheses of Lemma \ref{Lemma 7.3} are satisfied. If $\kappa(x), a(x), b(x), \beta(x) \le 0$, with at least one of the inequalities strict on a set of positive measure, and \eqref{7.7} holds, then the system \eqref{7.2} has a nontrivial bound state solution.
\end{theorem}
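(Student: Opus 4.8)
The plan is to produce the bound state as a min--max critical point of $\Phi$ restricted to the Nehari manifold $\mathscr{N}$, at a level lying strictly between $c_0$ and $\tilde c_0$, with the min--max detected by a topological linking built from the barycenter map $\xi$. I would first record the structural facts needed: as in the computation of Lemma~\ref{lem-2.1} with $p=4$, on $\mathscr{N}$ one has $G'(u,v)(u,v)=(2-p)\big[\|u\|^2+\|v\|^2-2\int_{\R^N}(\kappa_0+\kappa(x))\,uv\big]\le -C<0$, since $\kappa_0+\sup\kappa(x)<1$ and $\mathscr{N}$ is bounded away from the origin; hence $\mathscr{N}$ is a $C^1$ submanifold of $E$ whose nontrivial critical points of $\Phi|_{\mathscr{N}}$ are exactly the nontrivial critical points of $\Phi$, and testing the Lagrange-multiplier identity with $(u_k,v_k)$ shows that the multipliers of any $(\mathrm{PS})_d$ sequence of $\Phi|_{\mathscr{N}}$ tend to $0$, so such a sequence is a $(\mathrm{PS})_d$ sequence for $\Phi$ on $E$; by Lemma~\ref{Lemma 7.4}, $\Phi|_{\mathscr{N}}$ then satisfies $(\mathrm{PS})_d$ for all $d\in(c_0,\tilde c_0)$. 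Under the present hypotheses the positive ground state of \eqref{c-1} given by Lemma~\ref{lem-3.1} is the radial solution $z_1$ of Lemma~\ref{lem-3.2}, so the map $\Gamma$ and relations \eqref{7.8}, \eqref{7.11} are at our disposal.

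Next I would fix $R>0$ so large that $\Phi(\Gamma(y))<c_0+\tfrac12(\tilde c-c_0)$ whenever $|y|=R$ (possible by \eqref{7.8} and Lemma~\ref{Lemma 7.2}), set
\[
\mathcal G=\{\,h\in C(\overline{B_R},\mathscr{N}):h|_{\partial B_R}=\Gamma|_{\partial B_R}\,\},\qquad b=\inf_{h\in\mathcal G}\ \max_{y\in\overline{B_R}}\Phi(h(y)),
\]
and establish $b\in(c_0,\tilde c_0)$. The upper bound is immediate from $\Gamma|_{\overline{B_R}}\in\mathcal G$ and Lemma~\ref{Lemma 7.5} (which uses \eqref{7.7}): $b\le\sup_{\R^N}\Phi\circ\Gamma<\tilde c_0$. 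For the lower bound, given any $h\in\mathcal G$, the composition $\xi\circ h:\overline{B_R}\to\R^N$ is continuous (since $\mathscr{N}\subset E\setminus\{(0,0)\}$) and, by \eqref{7.11}, equals the identity on $\partial B_R$; by homotopy invariance of the Brouwer degree, $\deg(\xi\circ h,B_R,0)=\deg(\mathrm{id},B_R,0)=1$, so there is $y_0\in B_R$ with $\xi(h(y_0))=0$. Then $h(y_0)\in\mathscr{N}$ with vanishing barycenter, so $\Phi(h(y_0))\ge\tilde c$ by the definition of $\tilde c$, whence $\max_{\overline{B_R}}\Phi\circ h\ge\tilde c$; taking the infimum and using Lemmas~\ref{Lemma 7.1}--\ref{Lemma 7.2} gives $b\ge\tilde c>c=c_0$.

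Then I would argue that $b$ is a critical value of $\Phi|_{\mathscr{N}}$ by the usual deformation/contradiction scheme. If $b$ were regular, then, using $(\mathrm{PS})_b$ for $\Phi|_{\mathscr{N}}$, the deformation lemma on the $C^1$ manifold $\mathscr{N}$ would give, for $\varepsilon>0$ small, a continuous $\eta:\mathscr{N}\to\mathscr{N}$ with $\Phi\circ\eta\le\Phi$ everywhere, $\eta(z)=z$ whenever $\Phi(z)\notin[b-2\varepsilon,b+2\varepsilon]$, and $\eta(\{\Phi\le b+\varepsilon\})\subset\{\Phi\le b-\varepsilon\}$. Choosing $\varepsilon<\min\{\tfrac14(\tilde c-c_0),\tfrac12(\tilde c_0-b)\}$ one gets $[b-2\varepsilon,b+2\varepsilon]\subset(c_0,\tilde c_0)$ and $\Phi(\Gamma(y))<c_0+\tfrac12(\tilde c-c_0)<b-2\varepsilon$ for $|y|=R$, so $\eta\circ h$ still coincides with $\Gamma$ on $\partial B_R$, i.e. $\eta\circ h\in\mathcal G$ for every $h\in\mathcal G$. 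Applying this to an $h\in\mathcal G$ with $\max_{\overline{B_R}}\Phi\circ h<b+\varepsilon$ produces a member of $\mathcal G$ with max value $\le b-\varepsilon$, contradicting the definition of $b$. Hence there is $(u_\ast,v_\ast)\in\mathscr{N}$ with $\Phi'(u_\ast,v_\ast)=0$ and $\Phi(u_\ast,v_\ast)=b$; it is nontrivial because $\mathscr{N}$ is bounded away from $0$, and it is a true bound state, not a ground state, since $b>c$ and $c$ is not attained (Lemma~\ref{Lemma 7.1}).

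The hard part, I expect, is the bookkeeping that keeps the min--max class $\mathcal G$ invariant under the deformation --- matching the size of $\varepsilon$ to the already-fixed radius $R$ so that $\Phi(\Gamma(\cdot))$ stays below $\eta$'s active window on $\partial B_R$ --- together with the passage of the $(\mathrm{PS})$ condition and of the deformation lemma from the ambient space $E$ to the constraint $\mathscr{N}$. The linking estimate $b\ge\tilde c$ is where Lemma~\ref{Lemma 7.2} (that is, $\tilde c>c$) carries the real weight, turning the Brouwer degree count into the strict energy gap that lifts the solution off the non-attained ground-state level $c$.
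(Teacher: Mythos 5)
Your proposal is correct and follows essentially the same route as the paper: a linking min--max built from the barycenter map $\xi$ and the translated ground states $\Gamma(y)$, pinched between $\tilde c$ (via Lemma~\ref{Lemma 7.2} and a Brouwer degree/fixed-point argument) and $\tilde c_0$ (via Lemma~\ref{Lemma 7.5}), with compactness from Lemma~\ref{Lemma 7.4}. The only differences are cosmetic --- you parametrize the min--max class over $\overline{B_R}$ and invoke Brouwer degree rather than the fixed point theorem, and you spell out the constrained deformation step that the paper dismisses as ``a standard argument.''
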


\begin{proof}
By Lemmas \ref{Lemma 7.1} and \ref{Lemma 7.2}, and \eqref{7.8}, there exists a constant $R > 0$ such that
\begin{equation} \label{7.12}
c_0 < \max_{|y| = R}\, \Phi(\Gamma(y)) < \tilde{c}.
\end{equation}
Let
\[
Q = \Gamma(\overline{B_R(0)}), \qquad S = \{(u,v) \in \mathscr{N} : \xi(u,v) = 0\},
\]
and note that $\partial Q \cap S = \emptyset$ by \eqref{7.11}. We claim that $\partial Q$ links $S$, i.e., $h(Q) \cap S \ne \emptyset$ for every map $h$ in the family
\[
{\mathcal H} = \{h \in C(Q,\mathscr{N}) : h|_{\partial Q} = \id\}.
\]
To see this, let $h \in \mathcal H$ and consider the continuous map $\varphi = \xi \circ h \circ \Gamma : \overline{B_R(0)} \to \R^N$. If $|y| = R$, then $\Gamma(y) \in \partial Q$ and hence $h(\Gamma(y)) = \Gamma(y)$, so $\varphi(y) = y$ by \eqref{7.11}. By the Brouwer fixed point theorem, then there exists $y \in B_R(0)$ such that $\varphi(y) = 0$, i.e., $h(\Gamma(y)) \in S$.

We have
\begin{equation} \label{7.13}
\inf_S\, \Phi = \tilde{c} > \max_{\partial Q}\, \Phi > c_0
\end{equation}
by \eqref{7.12}. Set
\[
d = \inf_{h \in \mathcal H}\, \max_{(u,v) \in h(Q)}\, \Phi(u,v).
\]
Since $\partial Q$ links $S$, $d \ge \tilde{c} > c_0$, and since $\id \in \mathcal H$, $d < \tilde{c}_0$ by Lemma \ref{Lemma 7.5}, so $\Phi$ satisfies the (PS)$_d$ condition by Lemma \ref{Lemma 7.4}. So $d$ is a critical value of $\Phi$ by a standard argument.
\end{proof}

\begin{proof}[Proof of Theorem \ref{th1.2}]
From Theorem \ref{zhang}, we know that the results of Theorem
\ref{th1.2} hold.
\end{proof}

\textbf{Acknowledgements:} This work was completed while K. Perera was visiting the Academy of Mathematics and Systems Science at the Chinese Academy of Sciences, and he is grateful for the kind hospitality of the host institution. J. Wang was supported by Natural Science Foundation of China (No: 11571140) and Natural Science Foundation of Jiangsu Province (Nos: BK2012282, BK20150478). Z.-T. Zhang was supported by National Natural Science Foundation of China (Nos: 11325107, 11271353, 11331010).

\bibliographystyle{plain}

\end{document}